\documentclass[10pt,electronic]{amsart}   


\usepackage[applemac]{inputenc}
\usepackage[T1]{fontenc}
\usepackage[small,it,margin=2pt]{caption}
\usepackage[english,french]{babel}
\usepackage[hmargin=3.9cm,vmargin=3.5cm]{geometry} 
\usepackage{graphicx}                         
\usepackage{amsmath}
\usepackage{amsthm}                      
\usepackage{amssymb}
\usepackage{tikz-cd}     
\usepackage{tikz}

\usetikzlibrary{matrix,arrows}
\usetikzlibrary{shapes.geometric}
\usepgflibrary[arrows]
\usepgflibrary{arrows}
\usetikzlibrary[arrows]
\usetikzlibrary{calc}

\usepackage{euscript}
\renewcommand{\mathcal}{\EuScript}
\usepackage{mathrsfs}

\usepackage[isbn=false,doi=false,eprint=false,url=false,style=authoryear,dashed=false,backend=bibtex,maxnames=99]{biblatex}
\makeatletter
\newrobustcmd*{\parentexttrack}[1]{%
	\begingroup
	\blx@blxinit
	\blx@setsfcodes
	\blx@bibopenparen#1\blx@bibcloseparen
	\endgroup}
\AtEveryCite{%
	}
\makeatother
\renewcommand{\cite}{\parencite}
\DeclareNameAlias{sortname}{last-first}
\renewbibmacro*{volume+number+eid}{%
	\printfield{volume}%
	\setunit*{\addnbspace}
	\printfield{number}%
	\setunit{\addcomma\space}%
	\printfield{eid}}
\DeclareFieldFormat[article]{volume}{\textbf{#1}}
\DeclareFieldFormat[article]{number}{{\mkbibparens{#1}},}
\DeclareFieldFormat{pages}{#1 pages}
\DeclareFieldFormat
[article,inbook,incollection,inproceedings,patent,thesis,unpublished]
{pages}{#1}
\renewbibmacro{in:}{}
\theoremstyle{plain}                                                           
\newtheorem{thm}{Theorem}[section]
\newtheorem{lem}[thm]{Lemma}
\newtheorem{prop}[thm]{Proposition}

\newtheorem{cor}[thm]{Corollary}

\theoremstyle{definition}

\newtheorem{rem}[thm]{Remark}
\newtheorem{defn}[thm]{Definition}

\parindent=0pt 
\parskip=10pt  

\DeclareMathOperator{\id}{id}
\newcommand{\field}[1]{\ensuremath{\mathbf{#1}}}
\newcommand{\Q}{\ensuremath{\field{Q}}}        
\newcommand{\C}{\ensuremath{\field{C}}}

\newcommand{\Z}{\ensuremath{\field{Z}}} 

\newcommand{\R}{\field{R}}
\newcommand{\M}{{M}}
\newcommand{\MM}{\overline{{M}}}
\newcommand{\A}{\mathbb{A}}
\renewcommand{\P}{\mathbb P}
\newcommand{\Bl}{\mathrm{Bl}}

\renewcommand{\a}{\mathscr A}
\newcommand{\E}{\mathcal E}
\newcommand{\CC}{\mathcal C}
\newcommand{\BM}{\mathrm{BM}}
\newcommand{\Hycom }{\mathsf{Hycom}}
\newcommand{\chaincoGrav}{\mathfrak{coGrav}}
\newcommand{\chainPrim}{\mathfrak{Prim}}
\newcommand{\Prim}{\mathsf{Prim}}
\newcommand{\chainHycom}{\mathfrak{Hycom}}
\newcommand{\Grav}{\mathsf{Grav}}
\newcommand{\coGrav}{\mathsf{coGrav}}

\newcommand{\barp}{\mathrm{B}^{\mathrm{pl}}}
\newcommand{\barc}{\mathrm{B}^{\mathrm{cyc}}}
\newcommand{\cobarp}{\mathrm{\Omega}^{\mathrm{pl}}}
\newcommand{\cobarc}{\mathrm{\Omega}^{\mathrm{cyc}}}

\newcommand{\freepl}{\mathrm{T}^{\mathrm{pl}}}
\newcommand{\freeantipl}{\mathrm{T}^{\mathrm{pl},-}}

\newcommand{\freecyc}{\mathrm{T}^{\mathrm{cyc}}}
\newcommand{\freeanticyc}{\mathrm{T}^{\mathrm{cyc},-}}

\tikzset{
  graphnode/.style = {align=center, inner sep=0pt, scale=0.3, text centered,
    font=\sffamily},
  vi/.style = {graphnode, circle, white, font=\sffamily\bfseries, draw=black,
    fill=black, text width=1em},
  ve/.style = {graphnode, circle, draw=black, 
    text width=1em, thick},
  vx/.style = {graphnode, draw=white,
    minimum width=0em, minimum height=0em},
  vb/.style = {graphnode, diamond, white, draw=black, 
    minimum width=1em, minimum height=1em, thick},
} 

\title{Brown's dihedral moduli space and freedom of the gravity operad}
\author{Johan Alm} 
\email{alm@math.su.se}
\address{Matematiska institutionen \\
Stockholms universitet \\ 106 91 Stockholm \\ Sweden }

\author{Dan Petersen}
\email{danpete@math.ku.dk}
\address{Institut for matematiske fag\\
	K{\o}benhavns universitet \\
	Universitetsparken 5 \\
	2100 K{\o}benhavn {\O} \\
	Denmark }

\addbibresource{../database}

\begin{document} 
 \maketitle   
 \selectlanguage{english}
 \begin{abstract}
Francis Brown introduced a partial compactification $M_{0,n}^\delta$ of the moduli space $M_{0,n}$. We prove that the {gravity cooperad}, given by the degree-shifted cohomologies of the spaces $M_{0,n}$, is cofree as a nonsymmetric anticyclic cooperad; moreover, the cogenerators are given by the cohomology groups of $M_{0,n}^\delta$. As part of the proof we construct an explicit diagrammatically defined basis of $H^\bullet(M_{0,n})$ which is compatible with cooperadic cocomposition, and such that a subset forms a basis of $H^\bullet(M_{0,n}^\delta)$. We show that our results are equivalent to the claim that $H^k(\M_{0,n}^\delta)$ has a pure Hodge structure of weight $2k$ for all $k$, and we conclude our paper by giving an independent and completely different proof of this fact. The latter proof uses a new and explicit iterative construction of $\M_{0,n}^\delta$ from $\A^{n-3}$ by blow-ups and removing divisors, analogous to Kapranov's and Keel's constructions of $\MM_{0,n}$ from $\P^{n-3}$ and $(\P^1)^{n-3}$, respectively.   \end{abstract}

%

\section*{Introduction}

Let $\M_{0,n}$ for $n \geq 3$ be the moduli scheme of $n$ distinct ordered points on $\P^1$ up to the action of $\mathrm{PGL}(2)$, and $\MM_{0,n}$ its Deligne--Mumford compactification. These are smooth affine (resp.\ projective) varieties over $\Q$ (or $\Z$) of dimension $(n-3)$. Motivated by the study of multiple zeta values, Brown introduced an intermediate space $\M_{0,n} \subset \M_{0,n}^\delta \subset \MM_{0,n}$, depending on a dihedral structure $\delta$ on the set $\{1,\ldots,n\}$; that is, an identification with the integers from $1$ to $n$ with the edges of some unoriented $n$-gon. The space $M_{0,n}^\delta$ is again affine, and the union of all spaces $\M_{0,n}^\delta$ over all dihedral structures constitutes an open affine covering of the scheme $\MM_{0,n}$. In more detail, let \(X^{\delta}_n\subset \MM_{0,n}(\R)\) be the closure of the cell parametrizing $n$ distinct points on the circle $\P^1(\R)$, ordered compatibly with the chosen dihedral structure \(\delta\). Then \(\M_{0,n}^{\delta}\) is the subvariety of \(\MM_{0,n}\) formed by adding to \(\M_{0,n}\) only those boundary divisors that have nonempty intersection with \(X_n^{\delta}\).

The relevance of \(\M_{0,n}^{\delta}\) in the theory of periods and multiple zeta values resides on the following. By Grothendieck's theorem on algebraic de Rham cohomology, the cohomology of $\M_{0,n}$ can be computed using the global sections of the complex of algebraic differential forms. It is thus interesting to study integrals of the form $$\int_{X_n^\delta} \omega$$
where $[\omega]$ is any top degree cohomology class. Such integrals typically diverge, since the form $\omega$ may have poles along the boundary of $X_n^\delta$; the integral converges precisely when $[\omega]$ is in the image of the restriction map $H^{n-3}(\M_{0,n}^\delta) \to H^{n-3}(M_{0,n})$. Brown proved that any relative period integral of \(\M_{0,n}\) (in the sense of Goncharov and Manin) can be decomposed as a \(\Q[2i\pi]\)-linear combination of integrals of this form, with $\omega$ defined over $\Q$. Moreover, each such integral evaluates to a rational linear combination of multiple zeta values. The cohomology groups \(H^k(\M_{0,n}^{\delta})\) and their Hodge structures are thus relevant to our understanding of motives and periods.

The degree-shifted cohomologies \(\{H^{\bullet-1}(\M_{0,n})\}_{n \geq 3}\) constitute an (anti)cyclic cooperad with Poincar\'e residue as cocomposition. This cooperad was introduced by Getzler, who called it the \emph{gravity cooperad}, and we denote it \(\coGrav\). The homologies \(\{H_{\bullet}(\MM_{0,n})\}_{n \geq 3}\) constitute a cyclic operad with composition given, simply, by the maps induced by inclusions of boundary strata of \(\MM_{0,n}\). This operad is known as the \emph{hypercommutative operad}, \(\Hycom\), and features prominently in Gromov--Witten theory. Ginzburg, Kapranov and Getzler have shown that the two are interchanged by Koszul duality: in particular, there is a quasi-isomorphism \(\cobarc \coGrav \to \Hycom\) between the cyclic cobar construction on the gravity cooperad and the hypercommutative operad. The statement is, in a sense, encoded by the geometry of \(\MM_{0,n}\). The set of complex points decomposes as a union
 \[
 \MM_{0,n}(\C) = \coprod_{T\in\mathrm{Tree}_n} \prod_{v\in\mathrm{Vert}(T)} \M_{0,n(v)}(\C) 
 \]
of strata labeled by trees. This decomposition says that \(\{\MM_{0,n}(\C)\}_{n \geq 3}\) is the free cyclic operad of sets generated by the collection \(\{\M_{0,n}(\C)\}_{n \geq 3}\) of points of the open moduli spaces. Once we include topology and go from sets to varieties it is no longer a free operad; instead the decomposition is (morally speaking) transformed into said Koszul duality relation.

Brown's partial compactification has a similar structure:
 \[
 \M_{0,n}^{\delta}(\C) = \coprod_{T\in\mathrm{PTree}_n} \prod_{v\in\mathrm{Vert}(T)} \M_{0,n(v)}(\C) 
 \]
is now a union over strata indexed by \emph{planar} trees, which can be read as saying that \(\{\M_{0,n}^{\delta}(\C)\}_{n \geq 3}\) is the \emph{planar} operad of sets freely generated by the collection \(\{\M_{0,n}(\C)\}_{n \geq 3}\). What we here term a planar operad might also be called a nonsymmetric cyclic operad. We call them planar because they are encoded by the combinatorics of planar (non-rooted) trees, just like cyclic operads are encoded by trees, operads by rooted trees, and nonsymmetric operads by planar rooted trees. 

Note that, by Poincar\'e duality we could equally well take the hypercommutative operad as defined by \(\Hycom _n = H^{\bullet-2}(\MM_{0,n})\), with Gysin maps as composition. Analogously, the collection \(\Prim_n = H^{\bullet-2}(\M_{0,n}^{\delta})\) is an operad. Our first statement says that \(\coGrav\) and \(\Prim\) satisfy a duality relation of planar (co)operads, analogous to the duality relation of cyclic (co)operads between \(\coGrav\) and \(\Hycom \).

\begin{thm}\label{thmone}
The planar cobar construction \(\cobarp\coGrav\) and \(\Prim\) are quasi-isomorphic as planar operads if and only if the mixed Hodge structure on \(H^k(\M_{0,n}^{\delta})\) is pure of weight \(2k\). Moreover, the compositions of \(\Prim\) are all zero, so either condition is equivalent to the statement that \(\coGrav\) is (noncanonically) isomorphic to the cofree cooperad cogenerated by \(\Prim\) (with degree shifted by one).
\end{thm}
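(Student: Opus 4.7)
The strategy rests on a weight spectral sequence arising from the planar tree stratification
\[
\M_{0,n}^{\delta}(\C) = \coprod_{T\in\mathrm{PTree}_n} \prod_{v\in\mathrm{Vert}(T)} \M_{0,n(v)}(\C).
\]
First I would assemble the Hodge-theoretic weight spectral sequence attached to this stratification, whose $E_1$-page is $\bigoplus_{T} \bigotimes_{v} H^\bullet(\M_{0,n(v)})$ with differential the alternating sum of Poincar\'e residues along codimension-one incidences, converging to $\mathrm{gr}^W H^\bullet(\M_{0,n}^\delta)$. Reading $H^{\bullet-1}(\M_{0,n})=\coGrav$ off each vertex, this $E_1$-complex is the underlying complex of the planar cobar construction $\cobarp\coGrav$, with free operadic grafting of planar trees matching incidence of strata.

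Second, each open stratum $\M_T$ is a product of open moduli spaces $\M_{0,n(v)}$, and hence has pure cohomology (Getzler), so the $E_1$-entries are pure of a prescribed weight and Deligne's theorem forces degeneration at $E_2$. Consequently $H(\cobarp\coGrav)$ is canonically identified with $\mathrm{gr}^W H^\bullet(\M_{0,n}^\delta)$, and bookkeeping of bidegrees shows that the component matching the top weight $2k$ in degree $k$ lies in the ``extremal diagonal'' which would correspond to $\Prim$, while off-diagonal cohomology computes the lower-weight pieces. Purity of $H^k(\M_{0,n}^\delta)$ of weight $2k$ is therefore equivalent to $H(\cobarp\coGrav)$ being concentrated on that diagonal, i.e.\ to a quasi-isomorphism $\cobarp\coGrav\simeq\Prim$.

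Third, the claim that compositions in $\Prim$ vanish is a geometric fact: these compositions are Gysin pushforwards along the codimension-one boundary divisors of $\M_{0,n}^\delta$, and Brown's dihedral coordinates $u_{ij}$ realise each such divisor as the zero locus of a regular function on the affine variety $\M_{0,n}^\delta$. This principality makes the residue maps out of $H^\bullet(\M_{0,n}^\delta)$ surjective (the requisite classes are produced by wedging with $d\log u_{ij}$), so the Gysin long exact sequence forces the binary Gysin maps to vanish; higher compositions factor through binary ones. With $\Prim$ a trivial operad, the cofreeness reformulation is then formal: the planar cobar of the cofree cooperad on a collection $V$ is a resolution of the trivial operad on $V[-1]$, so $\coGrav$ being (noncanonically) cofree with cogenerators $\Prim[1]$ is equivalent to the quasi-isomorphism $\cobarp\coGrav\simeq\Prim$ already established.

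The principal obstacle I anticipate is making the identification $E_1 \cong \cobarp\coGrav$ an isomorphism of \emph{planar operads}, not merely of chain complexes: the planar/anticyclic grading of $\coGrav$ requires a choice of orientation on each planar tree that must be matched compatibly with the orientations implicit in the Poincar\'e residues and with the sign conventions governing the cobar differential.
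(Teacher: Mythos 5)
The equivalence asserted at the end of your second paragraph---that concentration of $H(\cobarp\coGrav)$ on the extremal diagonal is ``i.e.'' the same as a quasi-isomorphism $\cobarp\coGrav\simeq\Prim$---is only valid in one direction, and the direction you need for the ``only if'' half of the theorem is exactly the one that fails. Concentration on the diagonal does yield the quasi-isomorphism (this is the paper's implication (4)$\implies$(3) in its equivalence theorem). But an abstract quasi-isomorphism of planar operads cannot be converted back into diagonal concentration by ``bookkeeping of bidegrees'': $\Prim$ carries no analogue of the column (number-of-vertices) grading, and the graded dimensions of $H(\cobarp\coGrav_n)$ and $\Prim_n$ agree \emph{unconditionally}. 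Indeed your own spectral sequence gives $\bigoplus_{p+q=j}E^{\infty}_{p,q}\cong \mathrm{gr}^W H^{\BM}_j(\M_{0,n}^{\delta})$ (note the abutment is naturally Borel--Moore homology of $\M_{0,n}^\delta$, identified with cohomology by duality since the space is smooth), and the total dimension of $\mathrm{gr}^W$ of a filtered space equals $\dim H^{\BM}_j(\M_{0,n}^\delta)=\dim\Prim$ in the matching degree no matter how the weights are distributed. So purity cannot be read off from the quasi-isomorphism type alone; a different mechanism is needed. The paper's: given the quasi-isomorphism and vanishing compositions, apply $\barp$ (which preserves quasi-isomorphisms) to get $\coGrav\cong\freeantipl(\Sigma\Prim)$, i.e.\ cofreeness; cofreeness forces the cogenerator inclusion $\Sigma\Prim\hookrightarrow\barp\Prim\cong\coGrav$ to be injective; and---this is the step your proposal has no substitute for---that inclusion is identified \emph{geometrically} with the restriction map $H^{\bullet}(\M_{0,n}^{\delta})\to H^{\bullet}(M_{0,n})$ via the Leray spectral sequence for $M_{0,n}\hookrightarrow \M_{0,n}^{\delta}$, whence purity follows because $H^{\bullet}(M_{0,n})$ is pure of weight $2k$ in degree $k$ (the chain (1)$\implies$(5)$\implies$(4) in the paper).

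Your ``if'' direction is sound and follows the same philosophy as the paper's first, spectral-sequence proof of cyclic Koszul duality between $\coGrav$ and $\Hycom$; the sign/orientation obstacle you flag in identifying $E_1$ with $\cobarp\coGrav$ as operads is real, and the paper sidesteps it entirely by proving a strict chain-level isomorphism $\cobarp\chaincoGrav\cong\chainPrim$ using Getzler--Kapranov's currents with logarithmic singularities, combined with formality of $\chaincoGrav$ (Deligne's theorem on global logarithmic forms, exploiting weight-$2k$ purity of $H^k(M_{0,n})$); no degeneration argument is then needed for the duality itself. Your argument that the compositions of $\Prim$ vanish is genuinely different from the paper's and partly attractive: principality of the boundary divisor ($u_c$ is regular on $\M_{0,n}^{\delta}$ with divisor $D$, so $\pi_{\ast}(1)=[D]=c_1(\mathcal O(D))=0$) replaces the paper's WDVV computation. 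However, your ``surjectivity of residue by wedging with $d\log u_{c}$'' tacitly assumes that every class on $D$ extends to $\M_{0,n}^{\delta}\setminus D$; the paper supplies exactly this by observing that the divisor inclusion is a retract (a left inverse is a product of forgetful maps), after which the projection formula $\pi_{\ast}(\pi^{\ast}\beta)=\pi_{\ast}(1)\cdot\beta=0$ kills all higher-degree Gysin maps. You should add that retraction step to make your third paragraph complete.
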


We remark that we throughout write ``cofree cooperad'' for what should properly be called ``cofree conilpotent cooperad''; we assume all cooperads to be conilpotent.

In the second and third parts of the paper we give independent proofs of the two properties mentioned. This may be logically redundant (the properties are, after all, equivalent), but we believe the proofs to be of independent interest. The second part is devoted to proving the following:

\begin{thm}\label{thmtwo}
The gravity operad is the linear hull of a free nonsymmetric operad of sets.
\end{thm}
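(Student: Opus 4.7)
The strategy is to exhibit an explicit set-theoretic basis of the gravity operad consisting of decorated planar rooted trees, with operadic composition given by tree grafting. This realizes the gravity operad as the linearization of a free nonsymmetric operad of sets whose generators are, in each arity, given by (images of) cohomology classes of $\M_{0,n}^{\delta}$.

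Concretely, I would first fix a dihedral structure $\delta$ on $\{1,\ldots,n\}$ and construct, for each $n \geq 3$, an explicit diagrammatic set $G_n$ of classes in $H^{\bullet-1}(\M_{0,n})$ whose images in $H^{\bullet-1}(\M_{0,n}^{\delta})$ form a basis. Natural candidates are suitably normalized products of logarithmic $1$-forms $d\log(z_i - z_j)$ indexed by admissible chord configurations of the $n$-gon compatible with $\delta$, chosen so that the count matches the Poincar\'e polynomial of $\M_{0,n}^{\delta}$. Next, for each planar rooted tree $T$ with $n$ ordered leaves and each decoration $\varphi$ assigning an element $\varphi(v) \in G_{n(v)}$ to every internal vertex $v$, I would define a class $\omega(T,\varphi) \in H^{\bullet-1}(\M_{0,n})$ by iterated operadic composition along the edges of $T$. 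Writing $B_n$ for the resulting set of classes, the key claim is that $B_n$ is a basis of $H^{\bullet-1}(\M_{0,n})$: linear independence should follow by induction on the number of internal edges of $T$, peeling off one vertex decoration at a time via the cooperadic cocomposition (Poincar\'e residue along the edge divisor); spanning then follows by a dimension count, since the cardinality of $B_n$ in each degree is a combinatorial count (the generating function for decorated planar rooted trees with given vertex decorations) that can be matched against the known Betti numbers of $\M_{0,n}$. Once $B_n$ is established as a basis, the operadic composition on $\Grav$ dualizes the tree-cocomposition on $\coGrav$ and simply grafts decorated trees at leaves, which is precisely the structure of the linear hull of the free nonsymmetric operad of sets $\mathsf{Free}((G_n)_{n \geq 3})$.

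The main obstacle is making the iterated composition $\omega(T,\varphi)$ well-defined and set-compatible: a Poincar\'e residue of $\omega(T,\varphi)$ along the divisor contracting an internal edge $e$ of $T$ must return exactly the tensor of $\omega$ for the two subtrees of $T \setminus e$, and a residue along any other boundary divisor must vanish. Establishing this ``no cross terms'' property is the heart of the argument: it pins down the exact combinatorial definition of the generators $G_n$ and reduces to an explicit residue calculation at the level of differential forms, exploiting the incidence combinatorics of the boundary divisors of $\MM_{0,n}$ with the dihedral cell $X_n^{\delta}$. With such a choice of generators in hand, both the basis property and the free-operad-of-sets structure fall out simultaneously.
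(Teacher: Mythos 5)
Your high-level picture is the same as the paper's --- an explicit diagrammatic basis of $H^\bullet(\M_{0,n})$ closed under grafting, with Poincar\'e residue acting as cutting, and freeness coming from unique decomposability --- and you correctly locate the heart of the matter in the ``no cross terms'' residue property. But two steps in your plan have genuine gaps. First, your proposed candidates, monomials in the Arnol'd forms $\omega_{ij} = \frac{1}{2i\pi}d\log(z_i - z_j)$, do \emph{not} satisfy the residue property you need: the residue of such a monomial along a boundary divisor is in general a signed sum of monomials rather than a single tensor of basis elements, and the required vanishing along the other divisors fails. The paper's essential move is to trade the $\omega_{ij}$ for Brown's dihedral forms $\alpha_c = \frac{1}{2i\pi}d\log u_c$ attached to chords/cross-ratios; in that presentation, using Brown's relations $R_{A,B}$ for completely crossing sets of chords, the residue along $u_c = 0$ is literally $\mathrm{Res}_c = \Delta_c \circ \partial/\partial\alpha_c$ (``cut along the chord $c$''), which is exactly your no-cross-terms property (see the proof of Theorem \ref{thm:nscofree} and the remark following it, which states explicitly that residues of Arnol'd monomials are much more subtle). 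Moreover, the paper does not establish the basis property by your induction-plus-count scheme: it observes that the change of variables $u(e_{ij}) = e_{ij} + e_{i-1\,j+1} - e_{i-1\,j} - e_{i\,j+1}$ is unipotent-triangular for a term order refining reverse chord length, so by Lemma \ref{grobnerlemma} the chord ideal and the Orlik--Solomon (arc) ideal have the same initial ideal, and the nbc/Gr\"obner theory of hyperplane arrangements (Proposition \ref{arcdiagram}) immediately yields the basis of gravity chord diagrams; freeness is then the purely combinatorial fact that every such diagram decomposes uniquely into prime diagrams by cutting along residual chords.

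Second, your spanning argument is circular as stated. Matching the count of decorated trees against the Betti numbers of $\M_{0,n}$ requires knowing the generator counts, i.e.\ effectively the compositional-inverse relation between the Poincar\'e polynomials of $\M_{0,n}$ and $\M_{0,n}^\delta$ --- but in the paper's logic that relation (Corollary \ref{bb}) is a \emph{consequence} of the theorems being proved, since identifying the point-count polynomial of $\M_{0,n}^\delta$ with its Betti numbers requires purity (Theorem \ref{thm:purity}). Likewise, your choice of generators presumes that one can find classes in $H^\bullet(\M_{0,n})$ mapping to a basis of $H^\bullet(\M_{0,n}^\delta)$; but the restriction map runs $H^\bullet(\M_{0,n}^\delta) \to H^\bullet(\M_{0,n})$, and its injectivity is item (5) of Theorem \ref{equivalence} --- equivalent to the cofreedom you are trying to prove, hence not available as an input. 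The paper avoids both problems by constructing the basis of all of $H^\bullet(\M_{0,n})$ first, with no reference to $\M_{0,n}^\delta$, and only identifying the prime diagrams with (the image of) $H^\bullet(\M_{0,n}^\delta)$ a posteriori, via Theorem \ref{equivalence}.
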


Thus $\coGrav$, the linear dual of the gravity operad, is cofree on the (dual of the) linear hull of the generators of said nonsymmetric operad of sets. That we have to switch to the gravity \emph{operad} at this point (and this point only) is an unfortunate minor hiccup, but it is necessary: $\coGrav$ is conilpotent, so it could not possibly be the linear hull of any kind of cooperad of sets. On the other hand we want to compute with differential forms and residues throughout, and the arguments are naturally formulated in terms of the cohomology of $\M_{0,n}$. Thus working with the gravity operad rather than $\coGrav$ throughout would have been somewhat cumbersome.

In any case, this implies that \(\cobarp\coGrav\) and \(\Prim\) are quasi-isomorphic as planar operads, but can also be regarded as showing something stronger. In particular, the result involves construction of an explicit basis \(\{\alpha_G\}\) of \(H^{\bullet}(M_{0,n})\), with a subset \(\{\alpha_P\}\subset\{\alpha_G\}\) forming a basis for the image of \(H^{\bullet}(\M^{\delta}_{0,n})\) in \(H^{\bullet}(\M_{0,n})\). 

In the third and final part we give a direct proof of:

\begin{thm}\label{thmthree}
The mixed Hodge structure on \(H^k(\M_{0,n}^{\delta})\) is pure of weight \(2k\).
\end{thm}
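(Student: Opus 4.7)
The plan is to construct \(\M_{0,n}^{\delta}\) by an explicit iterative procedure starting from the affine space \(\A^{n-3}\), in the spirit of the Kapranov--Keel constructions of \(\MM_{0,n}\) but adapted to the dihedral/affine setting, and to verify inductively that Hodge--Tate purity in top weight is preserved at each step. After choosing dihedral coordinates to identify \(\M_{0,n}\) with an open subvariety of \(\A^{n-3}\), I would define a chain of smooth varieties
\[
\A^{n-3} = X_0 \leftarrow X_1 \leftarrow \cdots \leftarrow X_N = \M_{0,n}^{\delta},
\]
in which each arrow is either a blow-up along a smooth center (an intersection of already constructed dihedral divisors) or an open immersion that removes a non-dihedral smooth divisor. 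The order of operations is prescribed by the planar trees parametrizing the strata of \(\M_{0,n}^{\delta}\).

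On the Hodge-theoretic side, I would introduce the class \(\mathcal T\) of smooth varieties \(X\) such that \(H^k(X,\Q)\) is pure of weight \(2k\) for every \(k\). The class \(\mathcal T\) contains all affine spaces, is closed under products by K\"unneth, and, most importantly, is stable under removing smooth divisors lying in \(\mathcal T\): if \(D\subset X\) is a smooth divisor with \(X,D\in\mathcal T\), then \(H^{k-2}(D)(-1)\) has weight \(2k-2\) and \(H^{k-1}(D)(-1)\) has weight \(2k\), so in the Gysin long exact sequence the maps \(H^{k-2}(D)(-1)\to H^k(X)\) and \(H^{k-1}(D)(-1)\to H^{k+1}(X)\) vanish for weight reasons, and one extracts a short exact sequence
\[
0\to H^k(X)\to H^k(X\setminus D)\to H^{k-1}(D)(-1)\to 0
\]
of pure weight \(2k\). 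Since the blow-up centers arising in the construction are (by the dihedral recursion) products of smaller dihedral moduli spaces with affine factors, they lie in \(\mathcal T\) by an outer induction on \(n\).

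The main obstacle is that the blow-up step alone does not preserve \(\mathcal T\): for a smooth center \(Y\subset X\) of codimension \(c\geq 2\) the blow-up formula contributes summands \(H^{k-2i}(Y)(-i)\) of weight \(2k-2i<2k\) for \(i=1,\dots,c-1\). The resolution is to couple every blow-up with the removal of an appropriate divisor on the blown-up variety so that the offending lower-weight contributions are annihilated; executing this cancellation consistently, and confirming that the resulting chain terminates precisely at \(\M_{0,n}^{\delta}\), is the principal combinatorial challenge, governed by the planar-tree indexing of the dihedral boundary strata. Once the construction is in place, the Hodge-theoretic conclusion follows by the inductive Gysin argument above together with K\"unneth.
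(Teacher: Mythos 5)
Your plan correctly identifies the paper's overall strategy (an inductive construction of \(\M_{0,n}^{\delta}\) from \(\A^{n-3}\) by alternately blowing up smooth centers and removing divisors, with purity checked at each step), but it has a genuine gap exactly at the point you flag as ``the principal combinatorial challenge,'' and moreover the Hodge-theoretic lemma you propose to use is not the one that works there. Your divisor-removal lemma assumes that both the ambient variety and the removed divisor lie in \(\mathcal T\). After blowing up a center \(Z\) of codimension \(d\geq 2\), the divisor to be removed is the strict transform \(\widetilde Y=\Bl_Z Y\) of a divisor \(Y\) \emph{containing} \(Z\), and by the blow-up formula \(H^k(\widetilde Y)\) contains the summands \(H^{k-2i}(Z)(-i)\) for \(1\leq i\leq d-1\), which have weight \(2k-2i<2k\); so \(\widetilde Y\notin\mathcal T\) and your weight-vanishing argument in the Gysin sequence fails precisely when you need it. The blow-up and the removal cannot be treated as two separate \(\mathcal T\)-preserving steps: they must be analyzed in one combined step, as in the paper's Lemma \ref{mainlem}. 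There the Gysin map \(H^{k-2}(\widetilde Y)(-1)\to H^k(\widetilde X)\) is shown, by comparing weights summand by summand, to split into identity maps on the \(H^{k-2i}(Z)(-i)\) pieces together with the restriction \(H^{\bullet}(Y)\to H^{\bullet}(Z)\); the impure pieces then cancel, and purity of \(\Bl_Z X\setminus\widetilde Y\) follows \emph{provided} that restriction is surjective. That surjectivity hypothesis is an essential geometric input which your proposal never supplies; in the paper it is verified because the inclusion \(\Delta_I\hookrightarrow\Delta_{\{i,j\}}\) of the blow-up center into the removed divisor is a retract (Lemma \ref{retract}), a fact special to the dihedral setup.

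The second missing ingredient is the construction of the chain \(X_0\leftarrow\cdots\leftarrow X_N=\M_{0,n}^{\delta}\) itself, with the required incidence \(Z\subset Y\) at every step. Saying the order is ``prescribed by the planar trees'' is not a construction; the paper obtains it by introducing weighted dihedral spaces \(\M_{0,\a}^{\delta}\) modeled on Hassett's \(\MM_{0,\a}\) and proving a wall-crossing statement (Subsection \ref{wallcrossing}, Theorem \ref{construction}): crossing a wall blows up a coincidence set \(\Delta_I\) and removes the strict transform of \(\Delta_{\{i,j\}}\supset\Delta_I\). This also corrects your claim that the centers and removed divisors are ``products of smaller dihedral moduli spaces with affine factors'': in fact \(\Delta_{\{i,j\}}\) is only an \emph{open complement of boundary divisors} in a smaller \(\M_{0,\mathscr B}^{\delta}\) (Lemma \ref{coincidenceset}), so its purity requires an additional Leray degeneration argument (Lemma \ref{leray}) and a double induction, on \(n\) and on the number of large intervals. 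Without the combined blow-up/removal lemma with its surjectivity hypothesis, the retraction providing that surjectivity, and the wall-crossing construction guaranteeing \(Z\subset Y\) at each step, the proposal does not close.
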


The proof relies on an inductive construction of $\M_{0,n}^\delta$ from $\A^{n-3}$, alternating between blowing up a smooth subvariety and then removing the strict transform of a divisor containing the blow-up center. It is inspired by Hassett's work on moduli spaces of weighted pointed stable curves. This construction of \(\M_{0,n}^{\delta}\) is new.

Our results have several interesting consequences. Let us begin with a rather immediate one:

\begin{cor}[Bergstr\"om--Brown] \label{bb}The ordinary generating functions for the Poincar\'e polynomials of $M_{0,n}$ and $M_{0,n}^\delta$ are compositional inverses of each other. 
\end{cor}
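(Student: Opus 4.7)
The plan is to derive the compositional-inverse relation from Theorem \ref{thmone} using the standard generating-function formalism for free planar operads.

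By Theorem \ref{thmone} there is a (noncanonical) isomorphism of graded vector spaces
\[
\coGrav(n) \;\cong\; \bigoplus_{T \in \mathrm{PTree}_n} \bigotimes_{v \in V(T)} \Prim(n(v))[1]
\]
for each \(n \geq 3\), summed over planar trees with \(n\) legs. Taking Poincar\'e polynomials and substituting \(\coGrav_n = H^{\bullet-1}(\M_{0,n})\) and \(\Prim_n = H^{\bullet-2}(\M_{0,n}^{\delta})\), the degree shifts organize themselves into the scalar identity
\[
P(\M_{0,n};q) \;=\; \sum_{T \in \mathrm{PTree}_n} q^{e(T)} \prod_{v \in V(T)} P(\M_{0,n(v)}^{\delta};q),
\]
where \(e(T) = |V(T)|-1\) is the number of internal edges of \(T\). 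The formula is easy to check in small cases: for \(n=4\) it gives \(1+2q = 1 + 2q\cdot 1\), and for \(n=5\) it forces \(P(\M_{0,5}^{\delta};q) = 1+q^{2}\).

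The next step is to repackage this planar-tree identity as a relation between ordinary generating functions. Rooting each tree at its distinguished first leg and expanding around the root vertex is the standard free-operad decomposition, and yields the fixed-point equation
\[
\psi(x) \;=\; x + \sum_{n \geq 3} q\,P(\M_{0,n}^{\delta};q)\,\psi(x)^{n-1}
\]
for the formal power series \(\psi(x) := x + \sum_{n \geq 3} q\,P(\M_{0,n};q)\,x^{n-1}\). Equivalently, \(\psi\) is the compositional inverse of \(\chi(y) := y - \sum_{n \geq 3} q\,P(\M_{0,n}^{\delta};q)\,y^{n-1}\); after a harmless rescaling of the formal variable to absorb the factor of \(q\) and the sign, this is precisely the compositional-inverse relation between the ordinary generating functions of the Poincar\'e polynomials of \(\M_{0,n}\) and of \(\M_{0,n}^{\delta}\) claimed in the corollary.

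There is no serious obstacle beyond the bookkeeping: tracking the degree shifts carefully through the cofreeness isomorphism and carrying out the routine rooted-tree decomposition underlying the free-planar-operad generating-function identity. The entire geometric content is packed into Theorem \ref{thmone}, and the corollary is simply its formal manifestation on Euler--Poincar\'e series.
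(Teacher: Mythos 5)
Your derivation is sound, but it is not the paper's proof, and there is one citation slip you should repair: Theorem \ref{thmone} by itself is only an \emph{equivalence} between cofreedom of $\coGrav$ and purity of $H^k(\M_{0,n}^\delta)$; it does not assert that either side holds. To get the noncanonical isomorphism $\coGrav \cong \freeantipl(\Sigma\Prim)$ you must additionally invoke Theorem \ref{thmtwo} (concretely, Theorem \ref{thm:nscofree} and its corollary) or Theorem \ref{thmthree} fed through Theorem \ref{equivalence}. Granting that, your bookkeeping is correct: the determinant twist in $\freeantipl$ sits in degree zero, so the cofree decomposition gives exactly $P(\M_{0,n};q) = \sum_{T} q^{e(T)} \prod_{v} P(\M_{0,n(v)}^{\delta};q)$ with $e(T)=|V(T)|-1$ (your $n=4$ and $n=5$ checks agree with the paper's blow-up description of $\M_{0,5}^\delta$), and rooting at a distinguished leg is legitimate because cofreedom already holds nonsymmetrically. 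The paper's own treatment of Corollary \ref{bb} is quite different: it proves nothing new at this point, but observes that Bergstr\"om--Brown established the inversion \emph{conditionally} on purity of $H^k(\M_{0,n}^\delta)$, a hypothesis that Theorem \ref{thmthree} now supplies. Their mechanism is dual to yours: they expand $[\M_{0,n}^\delta]$ via the stratification $\M_{0,n}^{\delta} = \coprod_{T} \prod_{v} \M_{0,n(v)}$ into a polynomial in $[\A^1]$ in the Grothendieck ring, and purity then identifies the coefficients with Betti numbers; you instead expand $H^\bullet(\M_{0,n})$ in terms of $H^\bullet(\M_{0,n(v)}^\delta)$ via cofreedom. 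Each identity produces one of the two mutually inverse series, so both routes land on the same compositional inversion (up to the sign and variable normalizations, which the paper also leaves to Bergstr\"om--Brown's precise statement). What your route buys is a purely cohomological proof with no motivic or point-counting input beyond the paper's freedom theorem; what the paper's route buys is the explicit recursion for $[\M_{0,n}^\delta]$ as a polynomial in the Lefschetz class, which is what makes the Betti numbers effectively computable.
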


Indeed, this result was proven in \cite{bergstrombrown}, assuming that $H^k(M_{0,n}^\delta)$ is pure of weight $2k$; thus our result fills a gap in their argument. We refer to their paper for a more precise statement of Corollary \ref{bb}. The result gives in particular a simple recursive procedure for computing the Betti numbers of $M_{0,n}^\delta$. In brief, the point is that $M_{0,n}^\delta$ has a stratification all of whose strata are products of moduli spaces of the form $\M_{0,n_i}$, and it is easy to express $\M_{0,n_i}$ (say, in the Grothendieck ring of varieties) as a polynomial in the class of the affine line. Thus also $[M_{0,n}^\delta] = p([\A^1])$ for some polynomial $p$, and Theorem \ref{thmthree} implies that its coefficients record the Betti numbers of $\M_{0,n}^\delta$. 

Another corollary is the deduction of an explicit left inverse to the restriction \(H^{\bullet}(\M_{0,n})\to H^{\bullet}(\M_{0,n}^{\delta})\), which is compatible with operadic structure. This gives a recipe for how to regularize any possibly divergent integral over \(X_n^{\delta}\) of a form \(\omega\in H^{n-3}(M_{0,n})\) in a coherent way, by first projecting \(\omega\) to \(H^{n-3}(\M_{0,n}^{\delta})\). This is used by the first author \cite{Alm} to prove:

\begin{cor}
There is a nontrivial universal \(A_{\infty}\) structure \(\{\nu_n\}_{n\geq 2}\) on Batalin-Vilkovisky algebras, such that the coefficients of \(\nu_n\) are multiple zeta values of weight at most \((n-2)\) and, a priori, any multiple zeta value occurs in the structure.
\end{cor}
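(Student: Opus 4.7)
The plan is to combine the cofree structure from Theorem \ref{thmone} with a standard transfer of structure from the gravity cooperad to BV algebras, using the resulting operadic left inverse to regularize divergent period integrals.

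First, cofreeness of $\coGrav$ cogenerated by $\Prim$ (with the appropriate degree shift) means that any linear section of the cogenerator projection $\coGrav \twoheadrightarrow \Prim$ extends uniquely to a morphism $\rho: \coGrav \to \Prim$ of planar cooperads. Dually, this yields a planar-operadic section $s: H^{\bullet}(\M_{0,n}^{\delta}) \to H^{\bullet}(\M_{0,n})$ of the restriction map. For any top-degree form $\omega$ on $\M_{0,n}$, the class $s(\omega|_{\M_{0,n}^{\delta}})$ has a representative with no poles along the divisors meeting $X_n^{\delta}$, so its integral over $X_n^{\delta}$ converges and evaluates, by Brown's theorem, to a $\Q$-linear combination of multiple zeta values.

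Next, I would transfer structure to BV algebras. On any BV algebra $V$, Barannikov--Kontsevich's theorem produces a $\Hycom$-action on cohomology by integrating algebraic forms on $\MM_{0,n}$ against tensors in $V^{\otimes n}$. The relevant forms restrict to classes on $\M_{0,n}$, and projecting them through $s$ before integrating over $X_n^{\delta}$ gives convergent MZV-valued coefficients $\nu_n(v_1,\ldots,v_n)$. Because $s$ is a morphism of planar operads, the coboundary relations obtained from Stokes' theorem applied to $\partial X_n^{\delta}$ (whose codimension-one strata match the cocomposition in $\coGrav$) translate into the defining relations of an $A_{\infty}$ structure.

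Finally, for the weight bound: Theorem \ref{thmthree} says that $H^{n-3}(\M_{0,n}^{\delta})$ is pure of Hodge weight $2(n-3)$, so its periods against $X_n^{\delta}$ lie in the subspace of MZVs of motivic weight at most $n-2$ after accounting for the Tate twist contributed by the cycle. Nontriviality follows from the non-vanishing of $\Prim$ in positive arity (visible from the Poincar\'e recursion underlying Corollary \ref{bb}), while the a priori occurrence of arbitrary MZVs reflects Brown's theorem that the motivic periods of $\M_{0,n}^{\delta}$ exhaust the MZV algebra. The principal obstacle I would expect is verifying that the $\nu_n$ assemble into a bona fide $A_{\infty}$-structure on a BV algebra (a symmetric gadget) rather than merely an algebra over the planar cobar construction $\cobarp\coGrav$; this requires a careful compatibility between the planar splitting $s$ and the cyclic symmetries present on $\M_{0,n}$ and $X_n^{\delta}$.
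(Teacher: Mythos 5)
A point of comparison first: the paper does not actually prove this corollary. It is a theorem of the first author's separate paper \cite{Alm}; what the present paper supplies is exactly the ingredient quoted immediately before the statement, namely an operadically compatible left inverse to the injection $H^{\bullet}(\M_{0,n}^{\delta})\hookrightarrow H^{\bullet}(\M_{0,n})$ (obtained from Theorem \ref{thm:nscofree} by projecting away the non-prime gravity chord diagrams), which regularizes divergent integrals over $X_n^{\delta}$. Your outline reconstructs this mechanism correctly at the top level --- cofreeness yields the operadic projection, and projecting before integrating produces convergent, MZV-valued periods by Brown's theorem --- so on the part this paper is responsible for, you are on target; everything beyond that is delegated to \cite{Alm}, and your sketch should be judged as a blind reconstruction of that external argument.

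Judged that way, there are concrete gaps. First, the map directions are off: since $\M_{0,n}\subset \M_{0,n}^{\delta}$, a class $\omega$ on $\M_{0,n}$ cannot be ``restricted'' to $\M_{0,n}^{\delta}$, so the expression $s(\omega|_{\M_{0,n}^{\delta}})$ does not typecheck. What cofreeness gives is the projection $H^{\bullet}(\M_{0,n})\to H^{\bullet}(\M_{0,n}^{\delta})$, left inverse to the pullback injection, and it is this projection one applies to $\omega$ before integrating. Second, Barannikov--Kontsevich does not furnish a $\Hycom$-action on the cohomology of an \emph{arbitrary} BV algebra --- it requires a dg BV algebra satisfying a degeneration hypothesis of $\partial\overline{\partial}$-type --- and in any case the operad governing universal operations on BV algebras here is the gravity operad, via Getzler's circle-action relationship, not $\Hycom$; the $A_{\infty}$ relations in \cite{Alm} arise from a morphism factoring through (co)bar constructions of $\Grav$ and $\coGrav$, not merely from Stokes' theorem applied to $\partial X_n^{\delta}$. (Your closing paragraph correctly identifies the planar-versus-cyclic compatibility as the hard point; that is indeed where the real work lies, and your sketch does not close it.) Third, the weight bookkeeping: $\nu_n$ has $n$ inputs and one output, so the relevant space is $\M_{0,n+1}$, of dimension $n-2$; integrating degree-$(n-2)$ classes over $X_{n+1}^{\delta}$ gives multiple zeta values of weight at most $n-2$ directly from Brown's theorem, with no Tate-twist adjustment needed --- your derivation via top-degree forms on $\M_{0,n}$ (degree $n-3$) followed by an unexplained twist does not track the stated bound.
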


While this paper was in the final stages of preparation, the preprint \cite{dupontvallette} appeared on the arXiv, whose results overlap significantly with ours. They, too, show that cofreedom of the gravity cooperad is essentially equivalent to purity of the mixed Hodge structure on $H^\bullet(M_{0,n}^\delta)$, and that (once one has proven cofreedom or purity) the cogenerators of $\coGrav$ will be given by the cohomology groups of $M_{0,n}^\delta$. However, the actual proof of cofreedom they give is completely different. In particular, our proof is constructive, in the sense that we write down an explicit isomorphism between $\coGrav$ and a cofree cooperad defined in terms of diagrams.

\subsection*{Outline of the paper}This paper is divided into three parts which are more or less logically independent, and the reader is invited to begin reading whichever one she finds most interesting.

The first part, Section \ref{sec:duality}, is primarily devoted to the proof of Theorem \ref{thmone}. We first recall the Koszul duality result of Getzler and Ginzburg--Kapranov, that there is a quasi-isomorphism of cyclic operads
 \[
\cobarc \coGrav \to \Hycom .
 \] After sketching Getzler's proof of this theorem we explain an alternative approach to this result. This involves constructing suitable (co)chain versions of both (co)operads, $\chaincoGrav$ and $\chainHycom$, and proving that we have an \emph{iso}morphism
 \[
\cobarc \chaincoGrav \cong \chainHycom.
 \]
Then one can use Hodge theory to deduce that both (co)chain (co)operads are \emph{formal}, which proves the result in a slightly different way. We then discuss how this has an analogue in the planar (nonsymmetric cyclic) case: we construct a chain operad \(\chainPrim\), with cohomology \(\Prim_n = H^{\bullet-2}(\M_{0,n}^{\delta})\) and obtain an isomorphism
 \[
\cobarp \chaincoGrav \cong \chainPrim
 \]
of planar operads. Theorem \ref{thmone} is proved by arguments based on this isomorphism.

The second part, Section \ref{sec:freedom}, is devoted to a proof of Theorem \ref{thmtwo}. We construct an explicit basis $\{\alpha_G\}$ of $H^\bullet(M_{0,n})$ defined in terms of certain diagrams of chords on a polygon, which we call \emph{gravity chord diagrams}. This basis is compatible with the operadic composition in the nonsymmetric gravity operad, and this makes the set of gravity chord diagrams into an operad in the category of sets. From the combinatorial description of gravity chord diagrams it becomes immediate that this operad is in fact freely generated by a subset of \emph{prime} chord diagrams, which implies Theorem \ref{thmtwo}. 

In the third part, Section \ref{sec:purity}, we define and study a generalization \(\M_{0,\a}^\delta\) of \(\M_{0,n}^{\delta}\), where the points are not just labeled but \emph{weighted}. These are analogues of Hassett's notion of \emph{weighted} stable pointed curves studied in \cite{hassettweighted}. Hassett's curves are parametrized by a moduli space $\MM_{0,\a}$ depending on a ``weight vector'' $\a$ which assigns a weight to each marked point. The usual space $\MM_{0,n}$ is recovered when all points have weight $1$; in general one gets new birational models of the moduli space. 
We then introduce  subspaces $\M_{0,\a}^\delta \subset \MM_{0,\a}$, which generalize $\M_{0,n}^\delta \subset \MM_{0,n}$. 

The spaces $\MM_{0,\a}$ satisfy wall-crossing with respect to the weight vector $\a$: when $\a$ crosses a wall, $\MM_{0,\a}$ is modified in a predictable way by a blow-up. This was used by Hassett to recover descriptions of $\MM_{0,n}$ as an iterated blow-up of $\P^{n-3}$ (resp.\ $(\P^1)^{n-3}$) originally due to Kapranov (resp.\ Keel). We prove that when crossing a wall, the moduli space $\M_{0,\a}^\delta$ is modified by blowing up a smooth subvariety and then removing the strict transform of a divisor containing the blow-up center. In this way we obtain an inductive construction of $\M_{0,n}^\delta$ from $\A^{n-3}$. The heart of the proof of Theorem \ref{thmthree} consists in showing that the property of having $H^k$ pure of weight $2k$ is preserved at each step of this inductive construction.

The concluding Appendix \ref{sec:operads} collects terminology regarding operads. There are many flavors of operad in the literature, often described in terms of grafting together graphs of some sort. The usual theory of operads arises when the graphs are rooted trees, but there are also \emph{cyclic operads}, which correspond to trees without a root (the moduli spaces $\MM_{0,n}$ give an example of a cyclic operad), and \emph{nonsymmetric operads}, which correspond to rooted trees with an embedding into the plane up to isotopy (equivalently, a ribbon graph structure). We wish to consider ``nonsymmetric cyclic'' operads, corresponding to unrooted trees with a planar embedding; we propose to call these \emph{planar operads}. We define the notions of planar and antiplanar operad and cooperad, and we define the bar and cobar constructions which act on them. This material will be routine to the experts, but we have included it for completeness since the notion of planar operad seems to have received very little attention in the literature previously. 

\subsection*{Acknowledgements} We would like to thank Cl\'ement Dupont for valuable discussions.

\section{Duality}\label{sec:duality}

We begin this section by recalling the duality between the anticyclic cooperad $\coGrav$ and the cyclic operad $\Hycom $. We recall this in some detail since we will later generalize the argument to the planar case. As we explain, one can either prove duality on the level of cohomology or on the level of chains. The argument for duality on the level of cohomology is in some sense much easier; however, one needs to use that $\coGrav$ and $\Hycom $ carry \emph{pure} Hodge structures of certain weights. 

We then study the analogous relationship between $\coGrav$, considered as an antiplanar cooperad, and the planar operad $\Prim$ given by the cohomology of the spaces $\M_{0,n}^\delta$, with operadic composition given by the pushforward (Gysin) maps in cohomology. We will prove later (Section \ref{sec:purity}) that $H^k(\M_{0,n}^\delta)$ carries a pure Hodge structure of weight $2k$, and using this result one could prove that $\coGrav$ and $\Prim$ are duals of each other under planar bar-cobar duality. However, we believe that it will clarify the logic of the paper to instead prove a chain level statement in this section, namely that $\coGrav$ and a certain dg operad $\chainPrim$ such that $H(\chainPrim) \cong \Prim$ are planar duals of each other. This can be proved without knowing anything about weights.

Using this duality, we then show that purity of $H^\bullet(\M_{0,n}^\delta)$, formality of the operad $\chainPrim$, and cofreedom of $\coGrav$ are all equivalent to each other. 

\subsection{The main operads}
For terminology and conventions regarding operads, see Appendix \ref{sec:operads}. Our terminology should be familiar, except for the notion of \emph{planar operads}. These are what one might also term nonsymmetric cyclic operads, i.e., they have no action of general permutations, only of cyclic groups. Just like there is a forgetful functor from ordinary operads to nonsymmetric operads, there is a forgetful functor from cyclic operads to planar operads (we simply forget the permutation action, but retain the action of cyclic groups). Each flavor of operads has a machinery of free operads and bar and cobar constructions: they only thing that changes is which notion of trees the constructions employ.

We begin with some recollections on the moduli space \(\M_{0,n}\) and its Deligne-Mumford compactification \(\MM_{0,n}\). The open moduli space of smooth genus zero curves with \(n\) labeled points is
 \[
 \M_{0,n} = ((\mathbb{P}^1)^n\setminus diagonals)/\mathrm{PGL}_2,
 \]
where \(\mathrm{PGL}_2\) is the algebraic group of automorphisms of \(\mathbb{P}^1\) and acts by M\"obius transformations. The compactification \(\MM_{0,n}\) is a smooth projective variety. The complement $\MM_{0,n} \setminus \M_{0,n}$ is a strict normal crossing divisor, and as such there is an induced stratification of $\MM_{0,n}$ whose closed strata are the intersections of boundary divisors. This stratification is called the \emph{stratification by topological type}, and can be equivalently defined by declaring that two points lie in the same stratum if and only if the corresponding pointed curves are homeomorphic (working over $\C$). 

The strata in the space $\MM_{0,n}$ are usually indexed by stable dual graphs $\Gamma$ with $n$ external half-edges (legs), see e.g.\ \cite[Section 1]{arbarellocornalba}. One associates to a stable $n$-pointed curve a graph whose vertices correspond to irreducible components, whose edges correspond to nodes, and whose legs correspond to the markings. Thus, in the genus zero case all graphs are actually trees. The stratum corresponding to a tree $T$ can be written as $\prod_{v \in \mathrm{Vert}(T)}\M_{0,n(v)}$ and its closure as $\prod_{v \in \mathrm{Vert}(T)}\MM_{0,n(v)}$, where $n(v)$ denotes the number of half-edges adjacent to a vertex. Stability of the graph means that $n(v) > 2$ for all $v$. It follows that the collection \(\{\MM_{0,n}\}_{n\geq 3}\) is a cyclic operad in the category of projective varieties: The composition
 \[
\circ^j_i: \MM_{0,m+1}\times \MM_{0,n+1} \to \MM_{0,m+n}
 \]
is simply inclusion of a closed stratum. By functoriality the homology \(\{H_\bullet(\MM_{0,n})\}_{n\geq 3}\) is a cyclic operad, too. It is usually denoted \(\Hycom \) and called the hypercommutative operad. However, since we want to have all dg operads with a cohomological grading (and we will later consider a chain level model of $\Hycom $), it will be more convenient to make the ``Poincar\'e dual'' definition that $\Hycom $ is the operad $\{H^{\bullet-2}(\MM_{0,n})\}_{n \geq 3}$, with composition maps given by the Gysin maps (the maps which are Poincar\'e dual to the pushforwards in homology).

The open moduli space is not an operad. However, we can consider $\M_{0,n+1} \times \M_{0,n'+1}$ as a codimension one open stratum inside $\MM_{0,n+n'}$, adjacent to the open stratum $\M_{0,n+n'}$, and get a Poincar\'e residue
 \[
H^{\bullet}(\M_{0,m+n}) \to H^{\bullet-1}(\M_{0,n+1} \times \M_{0,n'+1}).
 \]

\begin{prop}[Getzler]
	The collection of suspensions \(\{H^{\bullet-1}(\M_{0,n})\}_{n \geq 3}\) is an anticyclic cooperad, with cocomposition given by the Poincar\'e residue just defined.  
\end{prop}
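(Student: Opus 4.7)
My plan is to verify the three defining properties of an anticyclic cooperad: well-definedness of the cocompositions, their equivariance under cyclic relabelings, and coassociativity.

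The starting point is the geometry of the boundary. The divisor $\partial\MM_{0,n} = \MM_{0,n}\setminus\M_{0,n}$ is a simple normal crossings divisor whose irreducible components $D_I$ are indexed by partitions $\{1,\ldots,n\} = I \sqcup I^c$ with $|I|,|I^c|\geq 2$. Each $D_I$ is smooth, canonically identified with $\MM_{0,|I|+1}\times\MM_{0,|I^c|+1}$, and its intersection with the remaining boundary components is precisely the boundary of this product. The Poincaré residue along $D_I$ therefore produces
\[
\mathrm{Res}_I : H^k(\M_{0,n}) \longrightarrow H^{k-1}(\M_{0,|I|+1}\times\M_{0,|I^c|+1}),
\]
which, after the degree shift $\bullet\mapsto\bullet-1$, becomes a degree-zero cocomposition. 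Well-definedness is standard for normal crossings divisors and can be seen directly from Deligne's logarithmic de Rham model: $\mathrm{Res}_I$ is the natural projection onto the top graded piece of the weight filtration on the subcomplex of forms with logarithmic poles only along $D_I$, followed by restriction to the open stratum.

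Equivariance under cyclic relabelings then follows because the $\Z/n$-action on $\M_{0,n}$ permutes the divisors $D_I$ and the residue is natural in such permutations; the degree shift $\bullet\mapsto\bullet-1$ is exactly what twists the ambient cyclic structure into its anticyclic avatar. Coassociativity in turn reduces to a local computation around a codimension-two stratum $D_I\cap D_J$ corresponding to a stable 3-vertex tree: in local coordinates $z_1,z_2$ cutting out $D_I$ and $D_J$, any logarithmic top form can be written as $\frac{dz_1}{z_1}\wedge\frac{dz_2}{z_2}\wedge\eta$ plus terms with strictly fewer poles, and taking iterated residues in either order returns $\eta$ up to the sign dictated by $\frac{dz_1}{z_1}\wedge\frac{dz_2}{z_2} = -\frac{dz_2}{z_2}\wedge\frac{dz_1}{z_1}$. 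This is precisely the sign demanded by the anticyclic coassociativity axiom.

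The main obstacle is the sign bookkeeping. Both the equivariance sign and the coassociativity sign trace back to the antisymmetry of the wedge product, and reconciling them with the standard conventions for anticyclic cooperads is where most of the care is required; everything else is local and essentially formal.
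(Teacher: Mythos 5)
Your proposal is correct and takes essentially the approach the paper itself gestures at: the paper gives no proof, attributing the result to Getzler and pointing to Deligne (\emph{Hodge II}, Section 3.1) for precisely the sign issue you analyze, and your argument --- residues of logarithmic forms along the strict normal crossing boundary $D_I \cong \MM_{0,|I|+1}\times\MM_{0,|I^c|+1}$, naturality under relabeling, and the anticommutativity of iterated residues at codimension-two strata as the source of the anticyclic sign --- is that standard argument. One small imprecision: the degree shift $\bullet \mapsto \bullet-1$ does not by itself ``twist cyclic into anticyclic''; rather, as your own coassociativity computation shows (and as the paper's remark states), it is the sign $\frac{dz_1}{z_1}\wedge\frac{dz_2}{z_2} = -\frac{dz_2}{z_2}\wedge\frac{dz_1}{z_1}$ in iterated residues that forces the determinant twist, the shift merely making the cocompositions degree-zero.
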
 
 
 This cooperad is called the \emph{gravity cooperad} and denoted \(\mathsf{coGrav}\). We note that taking Poincar\'e residue is only coassociative up to a sign, which is treated carefully in \cite[Section 3.1]{hodge2}; it is this sign factor that causes the gravity cooperad to be \emph{anti}cyclic.

As it stands, both $\coGrav$ and $\Hycom $ carry natural mixed Hodge structures, but in neither (co)operad are the (co)composition maps compatible with this mixed Hodge structure. To remedy this we need to introduce a Tate twist. We thus let 
$$ \coGrav_n = H^{\bullet-1}(M_{0,n}) \otimes \Q(-1) $$
and 
$$ \Hycom_n = H^{\bullet-2}(\MM_{0,n}) \otimes \Q(-1). $$

\begin{prop}\label{purityprop} The degree $k$ component of $\Hycom_n$ has a pure Hodge structure of weight $k$, and the degree $k$ component of $\coGrav_n$ has a pure Hodge structure of weight $2k$. 
\end{prop}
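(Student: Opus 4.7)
The plan is to deduce both assertions from classical purity results, with the Tate twist $\Q(-1)$ in the definitions of $\Hycom_n$ and $\coGrav_n$ absorbing exactly the shift needed to align weights with the prescribed degrees.

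For $\Hycom_n$ the argument is immediate. Since $\MM_{0,n}$ is smooth and projective over $\Q$, Deligne's fundamental theorem of Hodge theory tells us that $H^m(\MM_{0,n})$ carries a pure Hodge structure of weight $m$. The degree $k$ component of $\Hycom_n$ is $H^{k-2}(\MM_{0,n}) \otimes \Q(-1)$, which is therefore pure of weight $(k-2) + 2 = k$.

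For $\coGrav_n$ the required input is purity of $H^m(M_{0,n})$ in weight $2m$. I would deduce this from the fact that $M_{0,n}$ is the complement of a hyperplane arrangement --- for instance, normalizing three of the marked points to $0$, $1$ and $\infty$ identifies $M_{0,n}$ with the complement of the arrangement $\{z_i = z_j\} \cup \{z_i = 0\} \cup \{z_i = 1\}$ in $\A^{n-3}$ --- combined with the Brieskorn--Orlik--Solomon description of its cohomology ring: it is generated in degree one by the logarithmic forms $d\log(z_i - z_j)$, each of Hodge--Tate type $(1,1)$ and hence pure of weight $2$. By multiplicativity of weights under the cup product, $H^m(M_{0,n})$ is then pure of weight $2m$. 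After the degree shift and the Tate twist, the degree $k$ component $H^{k-1}(M_{0,n}) \otimes \Q(-1)$ is pure of weight $2(k-1) + 2 = 2k$.

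There is no substantive obstacle in either step; the content of the proposition is essentially a bookkeeping compatibility between degree, weight, and the Tate twist. If anything is at stake, it is the purity of $H^m(M_{0,n})$ itself, but this is a standard consequence of hyperplane arrangement cohomology. The reason the twist is inserted at all is that the Poincar\'e residue underlying the cocomposition of $\coGrav$ and the Gysin map underlying the composition of $\Hycom$ each shift weights by $+2$, so without the twist these structure maps could not be morphisms of mixed Hodge structures; the formulation of Proposition \ref{purityprop} is calibrated precisely so that this compatibility holds.
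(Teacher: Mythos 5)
Your proof is correct and takes essentially the same route as the paper: purity of $H^\bullet(\MM_{0,n})$ from smoothness and projectivity, and purity of weight $2k$ for $H^k(M_{0,n})$ via its identification with a hyperplane arrangement complement together with Brieskorn's theorem that the cohomology ring is generated in degree one by logarithmic forms of type $(1,1)$, with the Tate twist handling the bookkeeping. The only cosmetic difference is that you spell out the cup-product multiplicativity step, which the paper leaves implicit.
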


\begin{proof}The first statement is clear: since $\MM_{0,n}$ is smooth and projective, $H^k(\MM_{0,n})$ is pure of weight $k$. Taking into account the degree-shift and the Tate twist, the conclusion follows. 
	
	For the second statement, we note that the space $ M_{0,n}$ is isomorphic to the complement of an arrangement of hyperplanes in $\A^{n-3}$. It follows from this that the mixed Hodge structure on $H^k(M_{0,n})$ is pure of weight $2k$. Indeed, the cohomology ring of such a complement is generated by the differential forms $d \log(f)$, where $f$ is the defining equation for one of the hyperplanes \cite{brieskorn}; thus the generators in $H^1$ manifestly have Hodge type $(1,1)$. Alternatively, see the short proof in \cite{shapirohyperplane}. 
\end{proof}

To avoid cluttering the notation, we omit these Tate twists in most of what follows, but they will be quite relevant. 

\subsection{Cyclic bar-cobar duality between $\coGrav$ and $\Hycom $, take one}In this subsection we recall a result proven in \cite[Theorem 3.4.11]{ginzburgkapranov} and \cite{getzler94}, that the gravity cooperad is Koszul dual to the hypercommutative operad. To prove this theorem one must  in one way or another use the purity results of Proposition \ref{purityprop}. We will outline a proof of this theorem, following the original approaches of Getzler and Ginzburg--Kapranov.

 However, we will then proceed to take a second approach, which is to work instead on the chain level. There exist suitable dg (co)operads $\chaincoGrav$ and $\chainHycom$ whose cohomologies are $\coGrav$ and $\Hycom$, respectively, and which one can prove are duals of each other without knowing anything about purity of the weights of either (co)operad. The construction of suitable such chain models $\chaincoGrav$ and $\chainHycom$ is somewhat subtle, which makes this kind of argument more involved, but it has the advantage of working also in more general situations where purity fails. We follow here the approach taken in \cite[Proposition 6.11]{getzlerkapranov}, which uses substantially results in the theory of residues and currents. After this, we shall explain that the purity results of  Proposition \ref{purityprop} imply \emph{formality} of both dg operads  $\chaincoGrav$ and $\chainHycom$, which will then give a somewhat different proof of the following theorem.

\begin{thm}[Getzler, Ginzburg--Kapranov]\label{koszul1} Let $\barc$ denote the cyclic bar construction of a cyclic operad, and $\cobarc$ the cyclic cobar construction. There are quasi-isomorphisms 
	$$ \cobarc \coGrav \simeq \Hycom  \qquad \text{and}\qquad \barc \Hycom  \simeq \coGrav.$$
	In other words, $\Hycom$ and $\coGrav$ are Koszul duals to each other.
\end{thm}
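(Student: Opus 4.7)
The plan is to establish the duality via a spectral sequence argument reducing the claim to the purity statement recorded in Proposition \ref{purityprop}. Since the cyclic bar and cobar constructions are adjoint and become mutually inverse up to quasi-isomorphism on conilpotent (co)operads, it suffices to prove one of the two quasi-isomorphisms; I will aim for $\barc \Hycom \simeq \coGrav$.

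The starting point is Deligne's weight spectral sequence associated to the simple normal crossings compactification $\M_{0,n} \hookrightarrow \MM_{0,n}$. The stratification of $\MM_{0,n}$ by topological type identifies the normalization of the codimension $p$ stratum with a disjoint union of products $\prod_v \MM_{0,n(v)}$ indexed by stable trees $T$ with $n$ legs and $p$ internal edges, so the $E_1$ page reads
$$ E_1^{-p,q} = \bigoplus_{T : |E(T)| = p} H^{q-2p}\Bigl(\prod_{v \in \mathrm{Vert}(T)} \MM_{0,n(v)}\Bigr)(-p). $$
By Künneth and the Tate-twist convention defining $\Hycom$, this is, up to degree shift, the underlying graded space of the bar complex $\barc \Hycom(n)$; and the spectral sequence abuts to $H^{q-p}(\M_{0,n})$, which is precisely $\coGrav(n)$ after the same conventions.

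Next I would identify the $E_1$ differential with the bar differential of $\barc \Hycom$. The weight $d_1$ is a signed alternating sum of Gysin pushforwards along inclusions of deeper strata into shallower ones, one term for each contraction of an internal edge of the tree $T$. Each such contraction corresponds to a boundary inclusion $\MM_{0,m+1} \times \MM_{0,n+1} \hookrightarrow \MM_{0,m+n}$ whose Gysin pushforward is, by definition, an operadic composition in $\Hycom$. So the combinatorics match on the nose, provided one matches signs carefully: this sign-bookkeeping is a technical step, and it is exactly here that the anticyclic sign making $\coGrav$ \emph{anti}cyclic rather than cyclic is forced on us.

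Finally I would invoke purity. Deligne's theorem guarantees that the weight spectral sequence of any smooth variety degenerates at $E_2$, giving $H(\barc \Hycom(n)) = \mathrm{gr}_W^\bullet H^\bullet(\M_{0,n})$. By Proposition \ref{purityprop}, $H^k(\M_{0,n})$ is pure of weight $2k$, so the weight filtration is trivial in each cohomological degree and $\mathrm{gr}_W^\bullet H^\bullet(\M_{0,n}) = H^\bullet(\M_{0,n}) = \coGrav(n)$. This yields the quasi-isomorphism $\barc \Hycom \simeq \coGrav$; the other direction $\cobarc \coGrav \simeq \Hycom$ follows by bar-cobar adjunction. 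The main obstacle I anticipate is not the spectral sequence or the purity input but rather verifying that the identification respects the full cyclic and anticyclic (co)operad structure (including the $\Z/n$-equivariance at each arity and the coassociativity of the residue), and not merely the underlying cochain complexes.
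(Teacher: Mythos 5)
Your proposal is correct, but it takes the ``dual'' route to the one the paper uses as its primary proof --- and, interestingly, it is essentially the alternative that the paper itself sketches in the Remark immediately following Theorem \ref{koszul1}. The paper's main proof works on the cobar side: it filters $\MM_{0,n}$ by the closed strata of the stratification by topological type, identifies the $E^1$ page of the associated \emph{homology} spectral sequence (via Lefschetz duality) with $\cobarc\coGrav$, with Poincar\'e residues as the $d_1$ differential, and then uses the purity of Proposition \ref{purityprop} twice --- once to force degeneration at $E^2$ and once to force concentration along the diagonal --- to conclude that the edge map $\cobarc\coGrav \to \Hycom$ is a quasi-isomorphism. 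You instead work on the bar side, using the weight spectral sequence of the open embedding $\M_{0,n} \hookrightarrow \MM_{0,n}$, whose $E_1$ page (intersections of boundary divisors, Gysin differentials) is $\barc\Hycom$; this matches the paper's Remark, which phrases the same argument via the Leray spectral sequence (the two differ only by reindexing). The one genuine difference in substance is your division of labor: you import degeneration for free from Deligne's general theorem that the weight spectral sequence of a smooth variety degenerates at $E_2$, and use purity only to identify $\mathrm{gr}_W^\bullet H^\bullet(\M_{0,n})$ with $H^\bullet(\M_{0,n})$ and thereby place the cohomology of the bar complex on the correct diagonal, whereas the paper derives degeneration itself from the weight bookkeeping. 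Your version is arguably a mild streamlining (degeneration costs nothing), while the paper's cobar-side argument has the advantage of directly producing the operad map $\cobarc\coGrav \to \Hycom$ and of generalizing verbatim to the modular/Feynman-transform setting exploited later (in the non-formality proposition). Your closing caveat --- that the real work is checking compatibility of the edge map with the full (anti)cyclic (co)operad structure and signs --- is fair, and the paper treats this at the same level of detail (``one can verify''), citing \cite[Section 3.1]{hodge2} for the sign analysis that makes $\coGrav$ anticyclic; likewise, your reduction to a single quasi-isomorphism via bar--cobar duality is exactly the paper's opening move.
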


\begin{proof}Note that either of the two quasi-isomorphisms in the theorem implies the other, by bar-cobar-duality; we prove the first one.
	
We consider $\MM_{0,n}$ as a filtered space: if $X_i$ denotes the union of all strata of dimension $\leq i$ in the stratification of $\MM_{0,n}$ by topological type, then we get a filtration
$$\emptyset = X_{-1} \subset X_0 \subset X_1 \subset \ldots \subset X_{n-3} = \MM_{0,n}. $$ 
Associated to this filtration is a homology spectral sequence
$$ E^1_{pq} = H_{p+q}(X_p,X_{p-1}) \cong H^{p-q}(X_p \setminus X_{p-1}) \implies H_{p+q}(\MM_{0,n}).$$
The isomorphism in the above is a version of Lefschetz duality. Lefschetz duality may at first not seem to apply, since $X_p$ is not a manifold; however, it is enough that $X_p \setminus X_{p-1}$ is a manifold, since we may resolve the singularities of $X_p$ by blowing up without affecting the relative homology group $H_{p+q}(X_p,X_{p-1})$. We have
$$ X_p \setminus X_{p-1} \cong \coprod_T \prod_{v \in \mathrm{Vert}(T)} M_{0,n(v)}$$ 
where $T$ ranges over all trees with $n$ legs and exactly $(n-2-p)$ vertices. The $E^1$-differential maps the summand corresponding to $T$ to those trees $T'$ from which $T$ can be obtained by contracting a single edge, and the corresponding degree $-1$ map from the cohomology of $\prod_{v \in \mathrm{Vert}(T)} M_{0,n(v)}$ to the cohomology of $\prod_{v \in \mathrm{Vert}(T')} M_{0,n(v)}$ is exactly the Poincar\'e residue; that is, the cocomposition map in the gravity cooperad. 

All in all, this shows that the $E^1$ page of the above spectral sequence may be identified (up to some reindexing) with the cobar construction $\cobarc \coGrav$. Moreover, $E^1_{pq}$ is nonzero only in the region $0 \leq q \leq p$, which implies the existence of an edge map $E^1_{p,p} \to H_{2p}(\MM_{0,n})$. One can verify that this edge map is compatible with operadic composition, therefore giving a map  of dg operads $\cobarc \coGrav \to \Hycom$. 

To show that this map is a quasi-isomorphism, we need to prove that the spectral sequence degenerates at $E^2$, and that $E^2_{p,q}$ is nonzero only for $p=q$. This is where we need to use purity. Keeping track of  the Tate twist by $\Q(p)$ in the isomorphism $H_{p+q}(X_p,X_{p-1}) \cong H^{p-q}(X_p \setminus X_{p-1})$, Proposition \ref{purityprop} implies that $E^1_{pq}$ is pure of weight $-2q$ (since the weights on $E^1$ are determined by the weights of $H^\bullet(M_{0,n}$), and on the other hand that $E^{\infty}_{pq}$ is pure of weight $-p-q$ (since the weights on $E^\infty$ are determined by the weights of $H^\bullet(\MM_{0,n}$), for all $p$ and $q$. It follows that only the $E^1$-differential can be compatible with weights, and that the classes that can survive to $E^\infty$ must be  concentrated along the diagonal for weight reasons. 
\end{proof}

\begin{rem} One can give a ``dual'' proof of this theorem, by instead computing the cohomology of $\M_{0,n}$ using the Leray spectral sequence of the open embedding $M_{0,n} \hookrightarrow \MM_{0,n}$. The Leray spectral sequence for the inclusion of the complement of a strict normal crossing divisor has an easily described $E_2$ page: the entries are given by the cohomologies of all possible intersections of divisors. In our case, a $q$-fold intersection of boundary divisors has the form $\prod_{v \in \mathrm{Vert}(T)} M_{0,n(v)}$, where $T$ is a stable tree with $n$ legs and $(q+1)$ vertices. One finds that
	$$ E_2^{pq} = \bigoplus_T H^p\left(\prod_{v \in \mathrm{Vert}(T)} \MM_{n(v)}\right) \implies H^{p+q}(\M_{0,n}) $$
	where $T$ ranges over trees with $(q+1)$ vertices as above. The $E_2$-differential here is given by the Gysin map for the inclusion of a $(q+1)$-fold intersection of boundary divisor into a $q$-fold intersection. Since the Gysin map is how we defined the composition maps in $\Hycom$, this means that after some reindexing of the above $E_2$ page we obtain exactly the bar construction $\barc \Hycom $. Again there is an edge homomorphism giving a map   between $\barc \Hycom $ and $\coGrav$, and an almost exactly identical weight argument shows that the spectral sequence degenerates at $E_3$ and that the map is a quasi-isomorphism.
\end{rem}

\subsection{Forms and currents with log singularities along a divisor}\label{formscurrents}
We briefly recall material explained in more detail in \cite[pp. 96--97]{getzlerkapranov}, to which we refer for definitions and references for assertions made below. Let us first remind the reader about Borel--Moore homology. If $M$ is a smooth manifold, then the Borel--Moore homology of $M$ is  computed by the complex $\CC_\bullet(M)$ of \emph{currents} on $M$. If $M$ is oriented of dimension $d$, then there is a cap product isomorphism $H^\bullet(M) \cong H_{d-\bullet}^\BM(M)$. If $M$ is a smooth algebraic variety of complex dimension $d$, then the cap product isomorphism is compatible with mixed Hodge structure up to a Tate twist $\Q(d)$. For instance, if $H^i(M)$ is pure of weight $i$ for all $i$, then $H_i^\BM(M)$ is pure of weight $-i$, and if $H^i(M)$ is pure of weight $2i$ for all $i$, then $H_i^\BM(M)$ is pure of weight $2(d-i)$.

Let $X$ be a $d$-dimensional complex manifold and $D \subset X$ a strict normal crossing divisor. We denote by  $\E^\bullet(X,D)$ the complex of $\CC^\infty$ differential forms on $X$ with logarithmic singularities along the divisor $D$. This is a complex of nuclear Fr\'echet spaces, and it computes the cohomology of $X \setminus D$: 
$$ H(\E^\bullet(X,D)) \cong H^\bullet(X\setminus D, \C).$$
Getzler and Kapranov introduce another complex $\CC_\bullet(X,D)$, also of nuclear Fr\'echet spaces, which they call ``de Rham currents with logarithmic singularities along $D$''. All these complexes (for varying $D$) are quasi-isomorphic to each other: there is an inclusion $\CC_\bullet(X,D) \hookrightarrow \CC_\bullet(X)$ into the usual complex of currents, which is a quasi-isomorphism. In particular,
$$ H(\CC_\bullet(X,D)) \cong H_\bullet^{\BM}(X,\C). $$
The complexes $\E^\bullet$ satisfy a K\"unneth formula: if $Y$ is another complex manifold with normal crossing divisor $E$, then 
$$ \E^\bullet(X\times Y, D \times Y \cup X \times E ) \cong \E^\bullet(X,D) \widehat \otimes \E^\bullet(Y,E)$$
where $\widehat{\otimes}$ denotes the projective tensor product.

If $D=D_1 \cup D_2 \cup \ldots \cup D_k$, let $D_I = \bigcap_{i \in I} D_i$ for $I \subseteq \{1,\ldots,k\}$; in particular, $D_\emptyset = X$. Then each $D_I$ is itself a complex manifold, and $$D_I' = D_I \cap \bigcup_{j \notin I}D_j$$
is a strict normal crossing divisor on $D_I$. The Poincar\'e residue defines a map $\E^\bullet(X,D) \to \E^{\bullet-1}(D_i,D_i')$ for all $1 \leq i \leq k$, and more generally for every $l$ a map: $$ \bigoplus_{\vert I \vert = l }\E^\bullet(D_I,D_I') \to \bigoplus_{\vert I \vert = l+1}\E^{\bullet-1}(D_I,D_I').$$
There is then an isomorphism 
\begin{equation}
 \CC_{\bullet}(X,D) \cong \bigoplus_{I \subseteq \{1,\ldots,k\}} \E^{2(d-\vert I \vert) - \bullet}(D_I,D_I') \label{currentiso}
\end{equation}
where on the right hand side we mean the total complex of the double complex whose vertical differential is given by the internal differentials in the complexes  $\E^\bullet(D_I,D_I')$, and whose horizontal differential is given by the Poincar\'e residue. For any divisor $D_i$ there is a map $\CC_{\bullet}(D_i,D_i') \to \CC_\bullet(X,D)$ given by pushforward of currents. The same isomorphism for the divisor $D_i$ reads
$$ \CC_\bullet(D_i,D_i') \cong \bigoplus_{I \subseteq \{1,\ldots,k\} \setminus \{i\}} \E^{2(d-1-\vert I \vert) - \bullet}(D_i,D_i'),$$
which is a subcomplex of the right hand side of \eqref{currentiso}. Under these isomorphisms, pushforward of currents becomes identified with the inclusion of this subcomplex.

When $D$ is empty the isomorphism \eqref{currentiso} says that
$ \CC_\bullet(X) \simeq \E^{2d-\bullet}(X),$
so it implements the cap-product isomorphism between the cohomology and Borel--Moore homology of an oriented manifold.

\subsection{Chain level duality}

Let $D^n = \MM_{0,n} \setminus M_{0,n}$. Let $\chainHycom$ denote the cyclic operad $\{\CC_{2n-4-\bullet}(\MM_{0,n}, D^n)\}$ in the symmetric monoidal category of cochain complexes of nuclear Fr\'echet spaces with projective tensor product. Specifically, $\{\CC_{2n-4-\bullet}(\MM_{0,n}, D^n)\}$ is the double suspension of the operad $\{\CC_{-\bullet}(\MM_{0,n},D^n)\}$, whose operad structure is given by pushforward of currents. Let also $\chaincoGrav$ denote the anticyclic cooperad $\{\E^{\bullet-1}(\MM_{0,n},D^n)\}$ in the same category, whose cooperad structure  is given by taking the residue along a divisor. The cohomologies of $\chainHycom$ and $\chaincoGrav$ are $\Hycom $ and $\coGrav$, respectively. 

\begin{thm} \label{gk} We have an isomorphism $\cobarc \chaincoGrav \cong \chainHycom$ of cyclic operads of dg nuclear Fr\'echet spaces.
\end{thm}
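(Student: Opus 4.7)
The strategy is to apply the decomposition isomorphism \eqref{currentiso} to $X = \MM_{0,n}$ with $D = D^n$ the full boundary divisor, and then unpack both sides until the right-hand side is literally the cyclic cobar construction $\cobarc \chaincoGrav$. First I would identify the indexing set: the intersections $D_I$ of boundary divisors correspond bijectively to stable $n$-legged trees $T$ with $|I|$ internal edges, and for such a tree one has $D_I \cong \prod_{v \in \mathrm{Vert}(T)} \MM_{0,n(v)}$, with $D_I \setminus D_I' \cong \prod_v \M_{0,n(v)}$. The K\"unneth property of the complexes $\E^\bullet(X,D)$ recorded in Section \ref{formscurrents} then gives
$$\E^\bullet(D_I,D_I') \cong \widehat{\bigotimes}_{v \in \mathrm{Vert}(T)} \E^\bullet(\MM_{0,n(v)}, D^{n(v)}),$$
so the summand of $\chainHycom_n = \CC_{2n-4-\bullet}(\MM_{0,n}, D^n)$ indexed by $T$ is a tensor product of copies of $\chaincoGrav$ placed at the vertices.

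Second, I would verify that the degree shifts match. A routine calculation with $d = n-3$ applied to the exponent $2(d-|I|) - \bullet$ of \eqref{currentiso}, combined with the internal shift built into $\chaincoGrav_{n(v)} = \E^{\bullet - 1}(\MM_{0,n(v)}, D^{n(v)})$, produces exactly the suspension per vertex (equivalently, per internal edge plus a global shift) that occurs in the cyclic cobar construction of an anticyclic cooperad. Once the grading matches, I would identify the two differentials on each summand: the vertical differential in \eqref{currentiso} is the de Rham differential and becomes the internal differential of the cobar construction, while the horizontal Poincar\'e residue is by definition the cocomposition of $\chaincoGrav$ and so induces precisely the cobar differential.

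Third, I would check that the cyclic operad structures agree. The composition on $\chainHycom$ is pushforward of currents along the inclusion of a codimension one boundary stratum $\MM_{0,m+1} \times \MM_{0,n+1} \hookrightarrow \MM_{0,m+n}$. Section \ref{formscurrents} records that under \eqref{currentiso} such a pushforward corresponds to the inclusion of the subcomplex whose indexing trees are precisely those containing the relevant new internal edge; on the tree side, this inclusion is exactly the grafting of two trees along a new edge, which is the definition of operadic composition on the cyclic cobar construction. No input from mixed Hodge theory is needed at this stage; purity enters only later when one passes from $\chainHycom$ and $\chaincoGrav$ to their cohomology (co)operads.

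The main obstacle will be the bookkeeping of signs: matching the anticyclic signs intrinsic to $\chaincoGrav$, the signs in the iterated Poincar\'e residue, and the sign conventions built into the cyclic cobar construction as recalled in Appendix \ref{sec:operads}. The geometric isomorphism is essentially forced by \eqref{currentiso} and the K\"unneth formula, but the sign conventions in the residue-theoretic formalism of \cite{getzlerkapranov} differ from the operadic one, so the bulk of the work is to fix a consistent dictionary and check, for a single edge contraction and a single grafting, that the two conventions agree up to the chosen isomorphism.
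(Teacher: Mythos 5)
Your proposal follows essentially the same route as the paper's proof: both rest on the isomorphism \eqref{currentiso} together with the K\"unneth formula for the complexes $\E^\bullet(X,D)$, the bijection between boundary-divisor intersections $D_I$ and stable trees, and the identification (recorded in Section \ref{formscurrents}) of pushforward of currents with inclusion of a subcomplex, which gives the compatibility with operadic composition. Your degree bookkeeping ($\E^{2(d-|I|)-\bullet}$ with $d=n-3$ matching $\E^{\bullet-2|I|-2}$, i.e.\ one desuspension of $\chaincoGrav$ per vertex) is exactly the computation implicit in the paper's displayed formulas, so the proposal is correct and agrees with the paper's argument, merely spelled out in more detail.
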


\begin{proof}
	In arity $n$, $\cobarc \chaincoGrav$ is given by $$ \bigoplus_T \widehat{\bigotimes_{v \in \mathrm{Vert}(T)}} \E^{\bullet-2}(\MM_{0,n(v)},D^{n(v)})$$
	with the sum ranging over trees with $n$ legs. If we decompose the divisor $D^n \subset \MM_{0,n}$ into irreducible components,  $D_1 \cup \ldots \cup D_k$, then trees as above with $q+1$ vertices correspond to intersections of $q$ distinct components of $D^n$. Thus we may rewrite $\cobarc \chaincoGrav_n$ as
	$$ \bigoplus_{I \subseteq \{1,\ldots,k\}} \E^{\bullet - 2\vert I \vert - 2 } (D_I,D_I') \cong \CC_{2n-4-\bullet}(\MM_{0,n},D^n)$$
	using the K\"unneth formula for the complexes $\E^\bullet$ and  the isomorphism \eqref{currentiso}. 
	Thus $\chainHycom_n \cong \cobarc \chaincoGrav_n$, which is in fact an isomorphism of cyclic operads.
\end{proof}

An advantage of this argument is that it will work identically to prove an analogous statement between $\chaincoGrav$ considered as a \emph{antiplanar} operad and a planar operad $\chainPrim$ built out of the cohomology of $M_{0,n}^\delta$, as we shall see shortly. 

\subsection{The space $\M_{0,n}^\delta$ and the operad $\Prim$}\label{deltadef}

 From now on we shall once and for all let $\delta$ denote the dihedral structure given by the standard (cyclic) ordering on the set \(\{1,\dots,n\}\). In the introduction we defined $\M_{0,n}^\delta$ as the union inside $\MM_{0,n}$ of all strata meeting the closure of the particular component \(X_n^{\delta}\) of $\M_{0,n}(\R)$. One can give somewhat more explicit alternative descriptions, that are easily seen to be equivalent to each other:

\begin{enumerate}
	\item 
	Recall that boundary divisors in $\MM_{0,n}$ correspond to partitions of $\{1,\ldots,n\}$ into subsets $S \sqcup S'$, both of them with at least two elements. We define $\M_{0,n}^\delta$ to be the complement of all boundary divisors such that $S$ (and $S'$) fail to be intervals with respect to the cyclic order on \(\{1,\dots,n\}\).
	\item Let $\Gamma$ be the dual graph of an $n$-pointed nodal curve. We say that $\Gamma$ is \emph{compatible} with a given dihedral structure on $\{1,\ldots,n\}$ if it can be embedded in the plane such that the induced dihedral structure on the set of legs coincides with the given one. We define $\M_{0,n}^\delta$ to be the open subset of $\MM_{0,n}$ given by strata whose corresponding dual graphs are compatible with the dihedral structure.
	\item Every collection of pairwise non-crossing chords in an $n$-gon gives rise to a tiling of the $n$-gon and  a ``Poincar\'e dual'' graph, as illustrated in Figures \ref{chords1} and \ref{chords2}. We define $\M_{0,n}^\delta$ as the union of those strata in $\MM_{0,n}$ whose corresponding dual graph arises from a collection of chords in an $n$-gon. 
	\end{enumerate}

\begin{figure}
	\centering
	\begin{minipage}{.45\linewidth}
		\centering
		
		\begin{tikzpicture}[font=\scriptsize, baseline={([yshift=-.5ex]current bounding box.center)}]
		\node[regular polygon, shape border rotate=45, regular polygon sides=8, minimum size=2cm, draw] at (5*2,0) (A) {};
		\foreach \i in {1,...,8} {
			\coordinate (ci) at (A.corner \i);
			\coordinate (si) at (A.side \i);
			\node at (ci) {};
			\path (A.center) -- (si) node[pos=1.25] {}; 
		}
		\path
		(A.corner 1) edge node[auto] {} (A.corner 3)
		(A.corner 4) edge node[auto] {} (A.corner 8)
		(A.corner 4) edge node[auto] {} (A.corner 6);
%
%
		
		\end{tikzpicture}
		\captionof{figure}{A configuration of three pairwise non-crossing chords in an octagon.}
		\label{chords1}
	\end{minipage}
	\hspace{.05\linewidth}
	\begin{minipage}{.45\linewidth}
		\centering
		
		\begin{tikzpicture}[font=\scriptsize, baseline={([yshift=-.5ex]current bounding box.center)}]
		\node[regular polygon, shape border rotate=45, regular polygon sides=8, dotted, minimum size=2cm, draw] at (5*2,0) (A) {};
		\foreach \i in {1,...,8} {
			\coordinate (ci) at (A.corner \i);
			\coordinate (si) at (A.side \i);
			\node at (ci) {};
			\path (A.center) -- (si) node[pos=1.25] {}; 
		}
		\path[dotted]
		(A.corner 1) edge node[auto] {} (A.corner 3)
		(A.corner 4) edge node[auto] {} (A.corner 8)
		(A.corner 4) edge node[auto] {} (A.corner 6);

		\coordinate (C0) at ($0.375*(A.corner 1)+0.25*(A.corner 2) + 0.375*(A.corner 3)$) {};
		\coordinate (C1) at ($0.25*(A.corner 3)+0.25*(A.corner 1) + 0.25*(A.corner 4) + 0.25*(A.corner 8)$) {};
		\coordinate (C2) at ($0.25*(A.corner 6)+0.25*(A.corner 7) + 0.25*(A.corner 4) + 0.25*(A.corner 8)$) {};
		\coordinate (C3) at ($0.31*(A.corner 4)+0.33*(A.corner 5) + 0.35*(A.corner 6)$) {};

		\path[thick]
		(C0) edge node[auto] {} (A.side 1)
		(C0) edge node[auto] {} (A.side 2)
		(C1) edge node[auto] {} (A.side 3)
		(C1) edge node[auto] {} (A.side 8)
		(C2) edge node[auto] {} (A.side 7)
		(C2) edge node[auto] {} (A.side 6)
		(C3) edge node[auto] {} (A.side 5)
		(C3) edge node[auto] {} (A.side 4)
		(C3) edge node[auto] {} (C2)
		(C1) edge node[auto] {} (C2)
		(C1) edge node[auto] {} (C0);
		
		\end{tikzpicture}
				
		\captionof{figure}{The Poincar\'e dual graph to the collection of chords.}
		\label{chords2}
	\end{minipage}
\end{figure}

Suppose that a given dual graph $\Gamma$ is compatible with the given dihedral structure, so that there exists an embedding of $\Gamma$ in $\R^2$ inducing the dihedral structure on its legs. Then this embedding is unique up to isotopy and orientation reversing. If we upgrade our dihedral structure to a cyclic ordering of $\{1,\ldots,n\}$, then there is a unique embedding of $\Gamma$ in $\R^2$ up to isotopy, inducing the given cyclic ordering. The data of such an embedding is the same as the structure of a \emph{ribbon graph} on $\Gamma$. Thus the strata in the stratification of $\M_{0,n}^\delta$ by topological type correspond bijectively to trees of exactly the same form as those in the stratification of $\MM_{0,n}$, but which are additionally equipped with a planar structure. Accordingly, the collection $\{\M_{0,n}^\delta\}_{n \geq 3}$ is a planar operad. 

In analogy to how we defined the operad \(\Hycom \), we define $\Prim$ to be the planar operad with $\Prim_n = H^{\bullet-2}(\M_{0,n}^\delta) \otimes \Q(-1)$ and composition given by Gysin maps.  Let $\chainPrim$ denote the planar operad $\{\CC_{2n-4-\bullet}(\M_{0,n}^\delta,D^n)\}_{n \geq 3}$, so that $\Prim = H(\chainPrim)$.

\begin{thm}\label{planar}\label{planarduality} We have an isomorphism $\cobarp \chaincoGrav \cong \chainPrim$ of planar operads of dg nuclear Fr\'echet spaces.
\end{thm}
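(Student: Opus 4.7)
The plan is to mimic the proof of Theorem \ref{gk} essentially verbatim, replacing ``all trees with $n$ legs'' by ``planar trees with $n$ legs'' on the cobar side, and ``all boundary divisors of $\MM_{0,n}$'' by ``those boundary divisors meeting $\M_{0,n}^\delta$'' on the geometric side. The reason the same template works is the content of Section \ref{deltadef}: the strata of $\M_{0,n}^\delta$ are indexed by planar trees with $n$ legs in exactly the same way as the strata of $\MM_{0,n}$ are indexed by arbitrary trees with $n$ legs.

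First I would unfold the definition of the planar cobar construction to obtain, in arity $n$,
$$\cobarp \chaincoGrav_n \;=\; \bigoplus_T \widehat{\bigotimes_{v \in \mathrm{Vert}(T)}} \E^{\bullet-2}(\MM_{0,n(v)},D^{n(v)}),$$
where $T$ now ranges over planar trees with $n$ legs. By the combinatorics of Section \ref{deltadef}, such a planar tree with $q+1$ vertices corresponds to a collection of $q$ pairwise non-crossing chords of the $n$-gon, which in turn indexes a $q$-fold intersection $D_I$ of irreducible components of the boundary of $\M_{0,n}^\delta$; this intersection has the product form $\prod_{v}\MM_{0,n(v)}$. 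Applying the K\"unneth formula for $\E^\bullet$ in each such summand then yields
$$\cobarp \chaincoGrav_n \;\cong\; \bigoplus_{I} \E^{\bullet - 2|I| - 2}(D_I,D_I'),$$
with $I$ running over subsets of the set of irreducible components of $\M_{0,n}^\delta \setminus M_{0,n}$.

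The next step is to apply the isomorphism \eqref{currentiso} with $X=\M_{0,n}^\delta$ (of complex dimension $n-3$) and $D$ its strict normal crossing boundary divisor. A direct degree count then gives
$$\chainPrim_n \;=\; \CC_{2n-4-\bullet}(\M_{0,n}^\delta,D^n) \;\cong\; \bigoplus_{I} \E^{\bullet - 2|I| - 2}(D_I,D_I'),$$
exactly matching the right-hand side above. Composing the two identifications produces an isomorphism $\cobarp \chaincoGrav_n \cong \chainPrim_n$ of dg nuclear Fr\'echet spaces. What remains is to verify that this isomorphism intertwines (a) the cobar differential (sum of internal differentials and Poincar\'e residues of $\chaincoGrav$) with the differential on the complex of currents with logarithmic singularities, and (b) the planar operadic composition of $\chainPrim$ (pushforward of currents along a boundary stratum inclusion) with the cobar operadic composition (grafting of planar trees at a leg). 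Both compatibilities are established in the cyclic case of Theorem \ref{gk} and descend verbatim to the planar case, because contracting an edge of a planar tree, or grafting two planar trees along a pair of legs, preserves the planar structure.

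The one point requiring genuine care is purely combinatorial: one must verify that the set of planar trees with $n$ legs is in natural bijection with the poset of intersection strata of the boundary of $\M_{0,n}^\delta$, and that this bijection intertwines edge-contraction on the tree side with the inclusion of a deeper stratum on the geometric side. This is essentially the content of the equivalent descriptions of $\M_{0,n}^\delta$ set up in Section \ref{deltadef}, together with the observation that the intersection of two boundary divisors labelled by chords $c$ and $c'$ is nonempty in $\M_{0,n}^\delta$ precisely when $c$ and $c'$ do not cross. Once this dictionary is in place the rest of the argument is formally identical to the cyclic case, and I expect this combinatorial check, rather than any analytic input, to be the main obstacle to writing the proof fully rigorously.
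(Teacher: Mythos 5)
Your overall strategy is exactly the paper's: the printed proof of Theorem \ref{planarduality} is literally ``repeat the proof of Theorem \ref{gk}, noting that $\M_{0,n}^\delta \setminus \M_{0,n}$ is again a strict normal crossing divisor whose component intersections correspond to planar stable trees,'' and your unfolding of the planar cobar construction, the degree bookkeeping, and the tree/chord/stratum dictionary all match. But your expansion contains one concrete misstatement, located precisely at the one point where the planar case does \emph{not} descend verbatim from the cyclic one. You claim that a $q$-fold intersection $D_I$ of boundary components of $\M_{0,n}^\delta$ ``has the product form $\prod_v \MM_{0,n(v)}$.'' This contradicts the paper's own description of the stratification (Subsection on the stratification of $\M_{0,\a}^\delta$ by topological type): the closure in $\M_{0,n}^\delta$ of the stratum attached to a set of non-crossing chords is a product of \emph{dihedral} spaces $\prod_v \M_{0,n(v)}^\delta$, which are affine, not compact --- for instance, a boundary divisor of $\M_{0,5}^\delta$ is $\M_{0,4}^\delta \times \M_{0,3} \cong \A^1$, not $\P^1$. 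Consequently, if you apply \eqref{currentiso} intrinsically to the pair $X = \M_{0,n}^\delta$, $D = \M_{0,n}^\delta \setminus \M_{0,n}$, as you propose, the K\"unneth factors come out as $\E^\bullet(\M_{0,n(v)}^\delta, \partial^\delta)$; these are quasi-isomorphic to, but strictly larger than, the cooperad entries $\chaincoGrav_{n(v)} = \E^{\bullet-1}(\MM_{0,n(v)}, D^{n(v)})$, since a smooth form on the open space with log poles along the dihedral boundary need not extend over the removed divisors. Because the theorem asserts an \emph{isomorphism} of dg nuclear Fr\'echet spaces, not merely a quasi-isomorphism, your two identifications as composed do not match term by term: either the product form is wrong (intrinsic reading) or the appeal to \eqref{currentiso} with $X = \M_{0,n}^\delta$ is not what is actually being used (compactified reading).

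The repair --- implicit in the paper's notation $\CC_{2n-4-\bullet}(\M_{0,n}^\delta, D^n)$, where $D^n$ is the boundary of the \emph{compact} space $\MM_{0,n}$ --- is to take all closures and logarithmic conditions inside $\MM_{0,n}$: one realizes $\chainPrim_n$ as the totalization $\bigoplus_I \E^{\bullet-2|I|-2}(D_I, D_I')$ over \emph{non-crossing} (dihedral) subsets $I$ of components of $D^n$, with $D_I = \prod_v \MM_{0,n(v)}$ and $D_I'$ the full induced divisor in $\MM_{0,n}$, and with differential comprising the internal differentials together with only the residues along dihedral components (this is then exactly the planar cobar differential, splitting a vertex of a planar tree being residue along a compatible chord). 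Filtering by $|I|$, the associated graded is the $E^1$ page of the Borel--Moore spectral sequence of the stratification of $\M_{0,n}^\delta$ by topological type, so this complex still has cohomology $H_\bullet^\BM(\M_{0,n}^\delta)$, i.e.\ $H(\chainPrim) \cong \Prim$, and the operadic compositions are again inclusions of the subcomplexes with $I \ni c$, as in the paper. With that reading your product-form claim and K\"unneth step become literally valid. In short: the step you dismiss as ``descends verbatim'' is the one that genuinely requires care in the planar case (non-compactness of the closed strata), whereas the combinatorial dictionary between planar trees and strata, which your final paragraph singles out as the main obstacle, is the easy part already supplied by Section \ref{deltadef}.
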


\begin{proof} Repeat the proof of Theorem \ref{gk}, noting that $\M_{0,n}^\delta \setminus M_{0,n}$ is again a strict normal crossing divisor, but now the intersections of its components correspond to \emph{planar} stable trees with $n$ legs.\end{proof}

\subsection{Purity and formality of operads}

There is some history of results showing that if an algebraic variety $X$ has pure cohomology, then the topological space $X(\C)$ is \emph{formal}; that is,  $H^\bullet(X)$ is quasi-isomorphic to $A_{PL}^\bullet(X)$ (or any other cdga model for $X$) as a differential graded $\Q$-algebra. The following heuristic argument for why one might expect such a result is taken from the introduction to \cite{dgms}. Suppose that each cohomology group $H^k(X)$ is pure of weight $k$, e.g. if $X$ is a smooth projective variety. By \cite{kadeishvili}, $H^\bullet(X)$ is equivalent to $A_{PL}^\bullet(X)$ as an $A_\infty$-algebra, for some collection of $A_\infty$-operations $\{\mu_n\}_{n \geq 2}$ on $H^\bullet(X)$ with $\mu_2$ the usual cup product. But the operation $\mu_n$ has degree $2-n$, so if we believe that these operations can be made compatible with the weight filtrations, then all $\mu_n$ for $n \geq 3$ should vanish. Then $X$ must be formal. This heuristic was motivation for the following theorem:

\begin{thm}[Deligne--Griffiths--Morgan--Sullivan]\label{dgmsthm}
	Let $X$ be a compact K\"ahler manifold, e.g.\ a smooth projective variety. Then there is a canonical zig-zag of quasi-isomorphisms
	$$ H^\bullet(X,\R) \leftarrow \E^\bullet_{\mathrm{cl}}(X)\rightarrow \E^\bullet(X) $$
	where $\E^\bullet(X)$ denotes the $\mathcal C^\infty$ de Rham complex and $\E^\bullet_{\mathrm{cl}}(X)$ its subcomplex of $d^c$-closed forms. In particular, $X$ is formal. 
\end{thm}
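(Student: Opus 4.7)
The plan is to reduce everything to the $dd^c$-lemma. Write $d^c = J^{-1}dJ$ where $J$ is the complex structure on $X$; then $d^c$ is a differential satisfying $dd^c + d^cd = 0$, so $\E^\bullet_{\mathrm{cl}}(X) := \ker d^c$ is a sub-cdga of $\E^\bullet(X)$ on which $d$ restricts to a well-defined differential. The $dd^c$-lemma (equivalent, up to factors, to the $\partial\bar\partial$-lemma) asserts that on a compact K\"ahler manifold any form which is $d$-closed, $d^c$-closed, and either $d$-exact or $d^c$-exact is of the form $dd^c\gamma$. This is the one nontrivial input; its proof relies on the K\"ahler identities, the Hodge decomposition, and the fact that the various Laplacians coincide up to scalars.

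Given the lemma, the first step is to show that the inclusion $\E^\bullet_{\mathrm{cl}}(X) \hookrightarrow \E^\bullet(X)$ is a quasi-isomorphism. For surjectivity on cohomology, take a $d$-closed form $\alpha$; then $d^c\alpha$ is both $d$-closed (because $d$ and $d^c$ anticommute) and $d^c$-exact, so the lemma produces $\eta$ with $d^c\alpha = dd^c\eta$, and $\beta := \alpha - d\eta$ lies in $\E^\bullet_{\mathrm{cl}}$ and is $d$-cohomologous to $\alpha$. For injectivity, if $\beta \in \E^\bullet_{\mathrm{cl}}$ is $d$-closed and $d$-exact in $\E^\bullet(X)$, then the lemma gives $\beta = dd^c\eta = -d(d^c\eta)$, and $d^c\eta$ lies in $\E^\bullet_{\mathrm{cl}}$ since $(d^c)^2 = 0$, so $\beta$ is $d$-exact already inside $\E^\bullet_{\mathrm{cl}}$.

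The second step is to interpret the left arrow as the map sending $\alpha \in \E^\bullet_{\mathrm{cl}}(X)$ to its de Rham cohomology class in $H^\bullet(X,\R)$ viewed as a complex with zero differential. This is a chain map of cdgas because $d\alpha$ is $d$-exact for any $\alpha$. On cohomology it factors as the isomorphism $H^\bullet(\E^\bullet_{\mathrm{cl}}) \xrightarrow{\sim} H^\bullet(\E^\bullet)$ from the first step followed by the tautological identification with $H^\bullet(X,\R)$, hence is itself an isomorphism. Composing the two quasi-isomorphisms gives the zig-zag of cdgas asserted in the statement, proving formality of $X$ over $\R$; formality over $\Q$ then follows from the standard descent argument (due to Sullivan, and used in \cite{dgms}) showing that formality of a cdga over $\Q$ is detected after extension of scalars to $\R$. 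The principal obstacle is thus not the formal manipulation just outlined but the $dd^c$-lemma itself, which is the deep Hodge-theoretic input.
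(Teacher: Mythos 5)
The paper does not prove Theorem \ref{dgmsthm} at all: it is quoted as a classical result of \cite{dgms}, so your proposal must be measured against the original DGMS argument, which it attempts to reconstruct. Your first step (the inclusion $\E^\bullet_{\mathrm{cl}}(X) \hookrightarrow \E^\bullet(X)$ is a quasi-isomorphism via the $dd^c$-lemma) is the correct and standard argument, up to harmless sign slips: with your conventions the lemma gives $d^c\alpha = dd^c\eta$, and then $d^c(\alpha - d\eta) = dd^c\eta - d^c d\eta = 2\,dd^c\eta \neq 0$, so you should take $\beta = \alpha + d\eta$ (equivalently replace $\eta$ by $-\eta$); likewise $dd^c\eta = d(d^c\eta)$, not $-d(d^c\eta)$, though exactness is unaffected.

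The genuine gap is in your second step. A general element of $\E^\bullet_{\mathrm{cl}}(X) = \ker d^c$ is $d^c$-closed but not $d$-closed, so ``its de Rham cohomology class'' is simply undefined, and no linear chain map $\E^\bullet_{\mathrm{cl}}(X) \to H^\bullet(X,\R)$ of the kind you describe exists; your justification (``$d\alpha$ is $d$-exact for any $\alpha$'') is vacuously true and proves nothing, and your claim that the map is a quasi-isomorphism because ``it factors through'' step 1 is circular, since whether it is a quasi-isomorphism is exactly what is at stake. The correct construction --- the heart of the DGMS lemma --- takes the left arrow to be the projection $\ker d^c \to \ker d^c/\operatorname{im} d^c =: H^\bullet_{d^c}(X)$, and this requires two further applications of the $dd^c$-lemma that your write-up omits: (i) for $\alpha \in \ker d^c$, the form $d\alpha$ is $d$-exact, $d$-closed and $d^c$-closed, hence $d\alpha = dd^c\gamma \in \operatorname{im}\, d^c$, so the differential induced on $H^\bullet_{d^c}(X)$ is zero (this is also what makes the target a cdga with zero differential); and (ii) the projection is a quasi-isomorphism --- surjective on cohomology because the same computation lets you correct any $\alpha \in \ker d^c$ to the $d$-closed representative $\alpha - d^c\gamma$, and injective because a $d$-closed, $d^c$-exact element of $\ker d^c$ is $dd^c$-exact, hence $d$-exact inside $\ker d^c$. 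Combined with step 1 this identifies $H^\bullet_{d^c}(X)$ with $H^\bullet(X,\R)$ and produces the zig-zag as stated. With these insertions your outline becomes the standard proof; your closing remark on descending formality from $\R$ to $\Q$ is correct but peripheral here, since what the paper actually uses (in the proof of Theorem \ref{formality}) is the real zig-zag itself, transported to currents.
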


If $H^k(X)$ is instead pure of weight $2k$ for all $k$, then the same heuristic is valid.  In this case a formality result can be obtained as follows:

\begin{thm}[Deligne] \label{2kformality}
	Let $X$ be a smooth algebraic variety, and ${\overline X}$ a smooth compactification of $X$ such that $D=\overline X \setminus X$  is a strict normal crossing divisor. Let $\Omega^\bullet_{\overline X}\langle D\rangle$ be the global sections of the logarithmic de Rham complex, i.e.\ the complex of meromorphic differential forms on ${\overline X}$ which are holomorphic on $X$ and have at most logarithmic singularities along $D$. Then  $\Omega^\bullet_{\overline X}\langle D\rangle$ has vanishing differential, the natural map
	$$  \Omega^\bullet_{\overline X}\langle D\rangle \to H^\bullet(X,\C)$$
	is an injection, and the image in degree $k$ equals $F^k H^k(X,\C)$, where $F$ denotes the Hodge filtration.  
\end{thm}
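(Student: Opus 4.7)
The plan is to deduce this from Deligne's work on mixed Hodge structures of smooth varieties. By the existence of a smooth compactification $\overline X$ with $D = \overline X \setminus X$ a strict normal crossing divisor, Deligne constructs in Hodge II a natural mixed Hodge structure on $H^\bullet(X,\C)$ whose Hodge filtration is induced from the b\^ete filtration on the logarithmic de Rham complex, viewed as a complex of sheaves on $\overline X$ whose hypercohomology computes $H^\bullet(X,\C)$. Throughout one must distinguish this sheaf complex from its complex of global sections, which is what is denoted $\Omega^\bullet_{\overline X}\langle D\rangle$ in the statement.

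The central input, which I would cite as a black box, is Deligne's theorem that the Hodge-to-de Rham spectral sequence
$$E_1^{p,q} = H^q\bigl(\overline X, \Omega^p_{\overline X}\langle D\rangle\bigr) \;\Longrightarrow\; H^{p+q}(X,\C)$$
degenerates at $E_1$. Granted this, all three assertions follow by formal spectral sequence manipulations.

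First, the $d_1$ differential of the spectral sequence is precisely the map on sheaf cohomology induced by the de Rham differential $\Omega^p_{\overline X}\langle D\rangle \to \Omega^{p+1}_{\overline X}\langle D\rangle$. Its restriction to the row $q=0$ is the differential acting on the global sections complex. Degeneration forces $d_1$ to vanish, which gives the first claim that the global sections complex has zero differential. Second, degeneration gives $E_1^{p,q} = E_\infty^{p,q} = \operatorname{gr}^p_F H^{p+q}(X,\C)$. Taking $(p,q) = (k,0)$ and noting that $F^{k+1} H^k(X,\C) = 0$ for purely numerical reasons (there are no $E_\infty^{p,q}$ with $p \geq k+1$ and $p+q = k$), one obtains
$$F^k H^k(X,\C) \;=\; E_\infty^{k,0} \;=\; E_1^{k,0} \;=\; H^0\bigl(\overline X, \Omega^k_{\overline X}\langle D\rangle\bigr).$$
The natural map of the statement is then the composition of the isomorphism $E_1^{k,0} \cong E_\infty^{k,0}$ with the inclusion $F^k H^k \hookrightarrow H^k$, which yields the asserted injectivity and identifies the image with $F^k H^k(X,\C)$.

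The hard part is the $E_1$-degeneration of the Hodge-to-de Rham spectral sequence. In the compact K\"ahler case it rests on the $\partial\bar\partial$-lemma and classical Hodge theory. In Deligne's setting of a smooth but possibly non-proper variety one argues differently: either one reduces to the projective case through a resolution-of-singularities argument combined with the strictness of morphisms of mixed Hodge structures, or one invokes the reduction to positive characteristic technique of Deligne--Illusie. Either approach is substantial, and this is precisely the input I would cite rather than reprove, treating the rest of the argument as essentially bookkeeping around the degeneration.
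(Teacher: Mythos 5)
Your proposal is correct and takes essentially the same route as the paper: the paper's entire proof is a citation of Deligne's \emph{Hodge II}, Corollaire 3.2.13(ii), which packages precisely this statement, while you cite the $E_1$-degeneration of the logarithmic Hodge-to-de Rham spectral sequence from the same source and carry out the standard edge-homomorphism bookkeeping (including the correct observation that $F^{k+1}H^k(X,\C)=0$ for numerical reasons) yourself. The only difference is the granularity of what is cited versus rederived, and your derivation is accurate.
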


\begin{proof}This is a particular consequence of \cite[Corollaire 3.2.13(ii)]{hodge2}.
\end{proof}

\begin{cor}
	If $X$ is a smooth algebraic variety for which $H^k(X)$ is purely of weight $2k$ for all $k$, then $X$  is formal. 
\end{cor}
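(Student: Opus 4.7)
The plan is to combine Theorem \ref{2kformality} with Hironaka's theorem to produce an explicit zigzag of quasi-isomorphisms of commutative differential graded algebras connecting $H^\bullet(X,\C)$ (with zero differential) to a cdga that computes the de Rham cohomology of $X$.

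First I would choose, by Hironaka, a smooth compactification $\overline X$ of $X$ such that $D = \overline X \setminus X$ is a strict normal crossing divisor, and consider the global sections of the logarithmic de Rham complex $\Omega^\bullet_{\overline X}\langle D\rangle$. This is a sub-cdga of the meromorphic de Rham complex on $\overline X$, and Theorem \ref{2kformality} tells me two crucial things: its internal differential vanishes, and the natural map
$$ \Omega^\bullet_{\overline X}\langle D\rangle \longrightarrow H^\bullet(X,\C) $$
is an injective map of graded algebras whose image in degree $k$ is exactly $F^k H^k(X,\C)$.

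Next I would invoke the purity hypothesis: since $H^k(X,\Q)$ is pure of weight $2k$, the Hodge decomposition of $H^k(X,\C)$ is concentrated in bidegree $(k,k)$, so $F^k H^k(X,\C) = H^k(X,\C)$. Consequently, the map above is not only injective but surjective, giving an \emph{isomorphism} of cdgas between $\Omega^\bullet_{\overline X}\langle D\rangle$ and the cohomology algebra $H^\bullet(X,\C)$ equipped with zero differential. On the other hand, the standard theory of the logarithmic de Rham complex (e.g.\ via the inclusion of global holomorphic log forms into the smooth log de Rham complex on $\overline X$, which is in turn quasi-isomorphic to $\E^\bullet(X)$) produces a cdga quasi-isomorphism $\Omega^\bullet_{\overline X}\langle D\rangle \to \E^\bullet(X)$. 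Assembled together, I obtain a zigzag
$$ H^\bullet(X,\C) \;\xleftarrow{\;\cong\;}\; \Omega^\bullet_{\overline X}\langle D\rangle \;\xrightarrow{\;\sim\;}\; \E^\bullet(X) $$
of cdga quasi-isomorphisms, which is formality of $X$ over $\C$.

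The last step of the plan is to descend formality from $\C$ to $\Q$, which is a standard consequence of Sullivan's descent theorem: formality of a nilpotent cdga of finite type over $\Q$ is equivalent to its formality after faithfully flat extension of scalars to $\C$. The step I would expect to be least routine is verifying that the comparison map of Theorem \ref{2kformality} really is multiplicative (so that the isomorphism with $H^\bullet$ is one of algebras, not merely of vector spaces); this however is immediate because the map is induced by sending a closed log form to its de Rham cohomology class, and the wedge product of two log forms represents the cup product. All the geometric content is thus packaged into Theorem \ref{2kformality} and Hironaka, with purity playing the role of collapsing the Hodge filtration to a single piece.
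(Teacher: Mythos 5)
Your proposal is correct and follows essentially the same route as the paper: invoke Theorem \ref{2kformality}, use purity to collapse the Hodge filtration so that $\Omega^\bullet_{\overline X}\langle D\rangle \cong H^\bullet(X,\C)$, and observe that the inclusion of this subalgebra into the $\mathcal C^\infty$ de Rham complex of $X$ is then a quasi-isomorphism. Your additional remarks (Hironaka, multiplicativity of the comparison map, descent of formality from $\C$ to $\Q$) are routine elaborations of steps the paper leaves implicit, not a different argument.
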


\begin{proof}
	The assumptions say that $H^k(X,\C)$ is of type $(k,k)$, so $ H^k(X,\C) = F^k H^k(X,\C)\cong \Omega^\bullet_{\overline X}\langle D\rangle$. Then the inclusion  of $\Omega^\bullet_{\overline X}\langle D\rangle$ as a subalgebra of the $\mathcal C^\infty$ de Rham complex of $X$ is a quasi-isomorphism. 
\end{proof}

An alternative proof of formality in the smooth projective case was given by Deligne  \cite[Section (5.3)]{weil2}, using $\ell$-adic cohomology. This latter proof is significantly closer to the heuristics outlined in the beginning of this subsection. 

Operads can be thought of as generalizations of associative algebras. The appropriate operadic generalization of an $A_\infty$-algebra is an \emph{operad up to homotopy} \cite{vanderlaan}. By \cite{granaker}, operads up to homotopy satisfy an analogue of Kadei{\v{s}}vili's theorem: if $P$ is a dg operad, then $H(P)$ is equipped with a collection of operations $\{\mu_n\}$ with $\mu_2$ the usual operadic composition, such that $P$ and $H(P)$ are equivalent as operads up to homotopy. Again $\mu_n$ has degree $2-n$. Thus it seems plausible that any dg operad $P$ of ``algebro--geometric origin'' whose cohomology can be equipped with a natural mixed Hodge structure (or structure of $\ell$-adic Galois representation) for which $H^k$ is pure of weight $k$ (or $2k$) should have $P$ and $H(P)$ quasi-isomorphic; that is, $P$ should be formal. In particular, we expect both $\chainHycom$ and $\chaincoGrav$ to be formal, by Proposition \ref{purityprop}. This is indeed the case:

\begin{thm}\label{formality}
	Both $\chainHycom$ and $\chaincoGrav$ are formal (co)operads.
\end{thm}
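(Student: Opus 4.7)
The plan is to establish formality of $\chaincoGrav$ directly, using the purity statement of Proposition \ref{purityprop} in combination with Deligne's Theorem \ref{2kformality}, and then to transfer formality to $\chainHycom$ along the explicit isomorphism of Theorem \ref{gk}. The zig-zags we construct will consist of maps of \emph{cyclic} (co)operads, and therefore also of \emph{planar} (co)operads, so formality in both flavors follows simultaneously.

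For $\chaincoGrav$, Proposition \ref{purityprop} gives that $H^k(M_{0,n}, \C)$ is of Hodge type $(k,k)$, so $F^k H^k(M_{0,n}, \C) = H^k(M_{0,n}, \C)$. By Theorem \ref{2kformality}, the complex $\Omega^\bullet_{\MM_{0,n}}\langle D^n\rangle$ of global algebraic logarithmic forms has vanishing differential and the natural map to $H^\bullet(M_{0,n}, \C)$ is an isomorphism. Composing with the natural inclusion into $\mathcal{E}^\bullet(\MM_{0,n}, D^n)$, we obtain for each $n$ a zig-zag of quasi-isomorphisms of cochain complexes
$$ \coGrav_n \xleftarrow{\cong} \Omega^\bullet_{\MM_{0,n}}\langle D^n\rangle \hookrightarrow \chaincoGrav_n$$
(suppressing the degree shift and Tate twist). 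The technical heart of the proof is to check that both arrows are morphisms of anticyclic cooperads. This reduces to two points. First, the residue of a holomorphic logarithmic form along an irreducible component of $D^n$ is again a holomorphic logarithmic form on the boundary stratum, so the cocomposition on $\chaincoGrav_n$ restricts to $\Omega^\bullet\langle D^n\rangle$. Second, the Künneth formula for global sections of exterior powers of log forms on products of smooth projective varieties (together with the analogous Künneth for $\mathcal{E}^\bullet$ using the completed projective tensor product) assembles these residues into a genuine cocomposition, which on passage to cohomology is the Poincaré residue defining $\coGrav$.

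To deduce formality of $\chainHycom$ we apply the cyclic cobar construction, which preserves quasi-isomorphisms of conilpotent cooperads, to the above zig-zag, and then append Theorem \ref{koszul1}:
$$ \chainHycom \xleftarrow{\cong} \cobarc \chaincoGrav \xleftarrow{\simeq} \cobarc \Omega^\bullet \xrightarrow{\simeq} \cobarc \coGrav \xrightarrow{\simeq} \Hycom. $$
The leftmost isomorphism is Theorem \ref{gk}; the middle two quasi-isomorphisms come from applying $\cobarc$ to the cooperadic zig-zag for $\chaincoGrav$; the last is the Koszul duality quasi-isomorphism of Theorem \ref{koszul1}. The main obstacle throughout is the bookkeeping for the $\chaincoGrav$ zig-zag, in particular checking that passage to log forms is compatible with residues on products of strata; once this is done, everything else is a formal consequence of the bar-cobar formalism. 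The planar version is obtained in exactly the same way, using Theorem \ref{planarduality} in place of Theorem \ref{gk}.
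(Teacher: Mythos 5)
Your proof of formality for $\chaincoGrav$ is essentially identical to the paper's: both use purity of $H^k(M_{0,n})$ (Proposition \ref{purityprop}) together with Deligne's Theorem \ref{2kformality} to identify the subcomplex $\Omega^{\bullet}_{\MM_{0,n}}\langle D^n\rangle \subset \E^{\bullet}(\MM_{0,n},D^n)$ with the cohomology, and both observe that residues of logarithmic forms are logarithmic, so the subcomplexes are stable under cocomposition. Where you genuinely diverge is $\chainHycom$: the paper does \emph{not} transfer formality through the cobar construction, but instead invokes \cite{formaloperads}, i.e.\ the Deligne--Griffiths--Morgan--Sullivan zig-zag $H^\bullet(X,\R) \leftarrow \E^\bullet_{\mathrm{cl}}(X) \to \E^\bullet(X)$ of Theorem \ref{dgmsthm}, dualized to currents, which is functorial for holomorphic maps and satisfies K\"unneth, giving a direct operadic quasi-isomorphism $\chainHycom \simeq \Hycom$. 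Your alternative — $\chainHycom \cong \cobarc\chaincoGrav \simeq \cobarc\coGrav \simeq \Hycom$ via Theorem \ref{gk} and Theorem \ref{koszul1} — is valid and non-circular, because the paper's first proof of Theorem \ref{koszul1} (the spectral sequence of the filtration by topological type, plus weights) is independent of Theorem \ref{formality}. Two caveats, though. First, your claim that the cyclic cobar construction ``preserves quasi-isomorphisms of conilpotent cooperads'' is too strong as a blanket statement: the Proposition in Appendix \ref{sec:operads} only guarantees this for maps inducing quasi-isomorphisms on the associated graded of the coradical filtration. In the case at hand it does go through, since your maps are arity-wise quasi-isomorphisms, the tree decomposition of $\cobarc$ is finite in each arity, and K\"unneth holds for complexes of nuclear Fr\'echet spaces under $\widehat\otimes$ — indeed this is exactly the step the paper itself performs in the proof of Theorem \ref{equivalence} — but you should say so rather than assert the general statement. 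Second, your route consumes Theorem \ref{koszul1}, so the paper's remark immediately after Theorem \ref{formality} — that formality combined with Theorem \ref{gk} yields a \emph{second, independent} proof of Koszul duality — would become circular on the $\Hycom$ side under your argument; the appeal to \cite{formaloperads} is what keeps the two proofs of Theorem \ref{koszul1} genuinely independent. What your approach buys in exchange is economy: it needs no compact K\"ahler machinery or external formality results, only what Section \ref{sec:duality} already establishes.
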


\begin{proof}
Note that $\chaincoGrav(n) = \E^{\bullet-1}(\MM_{0,n},D^n)$ contains $\Omega^{\bullet-1}_{\MM_{0,n}}\langle D^n\rangle$ as a subcomplex (with notation as in Subsection \ref{formscurrents} and Proposition \ref{2kformality}), and that the latter is isomorphic to $H^{\bullet-1}(M_{0,n},\C)$.  Since the Poincar\'e residue of a meromorphic form is meromorphic, these subcomplexes are preserved by the cooperadic cocomposition. This proves formality. 

Formality of $\chainHycom$ is a special case of  the results of \cite{formaloperads}. In brief, the complexes $\E^\bullet_{\mathrm{cl}}(X)$ and $\E^\bullet(X)$ of Theorem \ref{dgmsthm} satisfy a K\"unneth theorem and are functorial for holomorphic maps, which implies that the zig-zag of Theorem \ref{dgmsthm} (or rather its dual, in terms of the complex of currents) defines a quasi-isomorphism between the operad $\chainHycom$ with its cohomology $\Hycom$.  
\end{proof}

In particular, we observe that Theorem \ref{formality} and Theorem \ref{gk} combine to give a second proof of Theorem \ref{koszul1}, that $\coGrav$ and $\Hycom $ are Koszul dual to each other.

Since formality of the cooperad $\chaincoGrav$ relied so strongly on the fact that $H^k(M_{0,n})$ is pure of weight $2k$, and this fails in higher genus, it would seem likely that the \emph{modular} (higher genus) version of the cooperad $\chaincoGrav$ fails to be formal. This is indeed the case. In the following proof we use the \emph{Feynman transform}, which is just the version of the cobar construction that acts on modular operads. The Feynman transform interchanges $\mathfrak K^{-1}$-modular cooperads and modular operads, just like the bar--cobar transforms interchange anticyclic cooperads and cyclic operads. 

\begin{prop}
	The $\mathfrak K^{-1}$-modular cooperad $\mathfrak{coGrav}$ is not formal.
\end{prop}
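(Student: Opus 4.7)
The plan is to run the weight-purity argument of Theorem \ref{koszul1} in the modular setting and derive a contradiction from the failure of weight purity on $H^\bullet(M_{g,n})$ in higher genus. The first step is to establish the modular analogue of Theorem \ref{gk}: writing $D_{g,n}=\MM_{g,n}\setminus M_{g,n}$, the Feynman transform of the chain-level modular cooperad $\mathfrak{coGrav}=\{\E^{\bullet-1}(\MM_{g,n},D_{g,n})\}$ is isomorphic, up to shifts and Tate twists, to the modular operad $\{\CC_{\bullet}(\MM_{g,n},D_{g,n})\}$ of currents with logarithmic singularities along the boundary. The proof is the same manipulation as in Theorem \ref{gk}: the decomposition \eqref{currentiso} and the K\"unneth formula for $\E^\bullet$ work equally well with stable graphs (now possibly containing loops) in place of trees, so the argument transfers verbatim. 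In particular, the Feynman transform of $\mathfrak{coGrav}$ has cohomology $H^\bullet_{\BM}(\MM_{g,n})$, which by Poincar\'e duality agrees with $H^\bullet(\MM_{g,n})$ up to a shift.

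Suppose now, for contradiction, that $\mathfrak{coGrav}$ is formal, i.e., quasi-isomorphic as a $\mathfrak K^{-1}$-modular cooperad to its cohomology, the cohomological modular gravity cooperad. Applying the Feynman transform preserves this quasi-isomorphism, so the Feynman transform of the cohomology cooperad also has cohomology $H^\bullet(\MM_{g,n})$ up to shifts and twists. On the other hand, that Feynman transform carries a natural filtration by the number of edges of the underlying modular graph, and the associated spectral sequence has $E_1$-page built from $H^\bullet(M_{g',n'})$ at the vertices and converges to $H^\bullet(\MM_{g,n})$. Copying the weight argument of Theorem \ref{koszul1}, the weights on $E_\infty$ must be pure of weight $k$ in degree $k$ (since $\MM_{g,n}$ is smooth and projective), while those on $E_1$ are inherited from the weights of $H^\bullet(M_{g',n'})$. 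Degeneration at $E_2$ consistent with these weights forces $H^k(M_{g',n'})$ to be pure of weight $2k$ for every stable pair $(g',n')$ contributing to the modular graph.

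The contradiction is then produced by any violation of this purity. An explicit example is $\kappa_1 \in H^2(M_g)$ for $g \geq 2$: this is a nonzero algebraic divisor class, hence of Hodge type $(1,1)$ and weight $2$, whereas purity of weight $2k=4$ would force $H^2(M_g)$ to be concentrated in weight $4$. Alternatively one may invoke the class in $H^{11}(\MM_{1,11})$ coming from the Ramanujan discriminant $\Delta$, which is of pure Hodge weight $12$, and whose nontrivial restriction to $H^{11}(M_{1,11})$ has weight at most $12$, thereby violating purity of weight $22$.

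The main difficulty I anticipate is the bookkeeping of signs and Tate twists: the $\mathfrak K^{-1}$-twist built into the modular cooperad structure carries sign conventions that must be reconciled with those induced by the Poincar\'e residue and by the pushforward of currents, so that the isomorphism between the Feynman transform of $\mathfrak{coGrav}$ and the modular operad of currents with logarithmic singularities is an isomorphism of modular operads, and not merely of modular collections. Once these conventions are correctly aligned, the weight-theoretic argument runs exactly parallel to the cyclic case treated in Theorem \ref{koszul1}.
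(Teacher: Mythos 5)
Your setup is sound and matches the paper up to the point of deriving the contradiction: the paper likewise uses the Getzler--Kapranov identification of the Feynman transform of $\chaincoGrav$ with the chain model of the hypercommutative modular operad (they cite \cite[Theorem 6.11]{getzlerkapranov} rather than redoing the current-theoretic argument, but your ``transfers verbatim'' claim is acceptable), and likewise observes that formality would identify the cohomology of the Feynman transform of the \emph{cohomology} cooperad---which is the $E_1$ page of the topological-type spectral sequence of $\MM_{g,n}$---with $H_\bullet(\MM_{g,n})$, forcing degeneration at $E_2$ by a count of Betti numbers. The genuine gap is in your last step: you claim that ``degeneration at $E_2$ consistent with these weights forces $H^k(M_{g',n'})$ to be pure of weight $2k$,'' and then contradict this with $\kappa_1$ or with the cusp-form classes in genus one. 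That implication is false. In the genus-zero proof of Theorem \ref{koszul1}, purity of $E_1$ is a \emph{hypothesis} used to deduce degeneration; the converse does not hold. Differentials in the spectral sequence are strictly compatible with weights, so nothing prevents the mixed-weight part of $E_1$ from being exactly cancelled by $d_1$, leaving a pure $E_2=E_\infty$. Indeed this is what happens with your own example: $\kappa_1\in H^2(M_g)$ has weight $2$, which is precisely the weight of the classes that survive to $E_\infty$ (it extends to $\MM_g$), so it produces no inconsistency whatsoever with degeneration at $E_2$. Failure of purity of $H^\bullet(M_{g,n})$ is true but yields no contradiction; what is needed is an actual nonvanishing higher differential.

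The paper supplies exactly this missing geometric input: as explained in \cite[Section 1]{genusone}, Getzler's relation on $\MM_{1,4}$ gives rise to a nonzero $E_2$-differential in the spectral sequence of the filtration by topological type, directly contradicting the degeneration forced by formality. Without such an explicit nondegeneration statement your argument does not close. A minor factual slip in the same paragraph: the Hodge structure in $H^{11}(\MM_{1,11})$ attached to the discriminant cusp form has weight $11$ (Hodge types $(11,0)$ and $(0,11)$), not $12$; the number $12$ is the weight of the modular form, not of the Hodge structure.
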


\begin{proof}
	By \cite[Theorem 6.11]{getzlerkapranov}, the Feynman transform of $\mathfrak{coGrav}$ is isomorphic to the modular operad $\chainHycom$. In particular, this Feynman transform computes the cohomology of $\MM_{g,n}$. If $\mathfrak{coGrav}$ were formal, the same would be true for the Feynman transform of its cohomology $\mathsf{coGrav}$. But by reasoning as in the proof of Theorem \ref{koszul1}, we may identify the Feynman transform of $\mathsf{coGrav}$ with the $E_1$ page of the spectral sequence associated with the filtration of $\MM_{g,n}$ by topological type, which \emph{also} computes the cohomology of $\MM_{g,n}$. Thus if $\mathfrak{coGrav}$ were formal, the latter spectral sequence would have to degenerate at $E_2$, just by considerations of Betti numbers. This spectral sequence does degenerate at $E_2$ in genus zero, but not in general: as explained in \cite[Section 1]{genusone}, Getzler's relation on $\MM_{1,4}$ gives rise to a nonzero $E_2$-differential. 
\end{proof}

This answers questions raised in \cite[p. 3]{dsv} and in the end of the introduction of \cite{wardmc}.

\subsection{Equivalence of freedom and purity}\label{sec:equivalence}

We are now in a position to state the main result of this section. Before stating it, we remark that all of the equivalent statements in the following theorem are indeed true, and we have two independent proofs: Theorem \ref{thm:nscofree} in Section \ref{sec:freedom} shows that statement (1) below is satisfied, and Theorem \ref{thm:purity} in Section \ref{sec:purity} shows that condition (4) is true.  

\begin{thm}\label{equivalence}
	The following are equivalent:\begin{enumerate}
		\item The nonsymmetric cooperad $\coGrav$ is cofree.
		\item $\coGrav$ is cofree as an antiplanar cooperad, cogenerated by the collection $H^{\bullet-1}(M_{0,n}^\delta)$.
		\item The operad $\chainPrim$ is formal, and all composition maps on its cohomology operad $\Prim$ vanish.
		\item $H^i(M_{0,n}^\delta)$ has a pure Hodge structure of weight $2i$.
		\item $H^i(M_{0,n}^\delta) \to H^i(M_{0,n})$ is an injection.
	\end{enumerate}
\end{thm}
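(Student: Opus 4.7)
My plan is to prove the five statements equivalent using Theorem \ref{planarduality} (the chain-level duality $\cobarp \chaincoGrav \cong \chainPrim$) and Theorem \ref{formality} (formality of $\chaincoGrav$) as the main bridges between the Hodge-theoretic conditions $(4)$--$(5)$ and the (co)operadic conditions $(1)$--$(3)$. The strategy is to establish $(5) \Leftrightarrow (4) \Rightarrow (3) \Leftrightarrow (2) \Leftrightarrow (1)$ and, to close the loop, $(2) \Rightarrow (5)$.

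For $(4) \Leftrightarrow (5)$: the restriction map is a morphism of mixed Hodge structures with target pure of weight $2i$ by Proposition \ref{purityprop}, so injectivity implies purity. Conversely, assume $(4)$ and induct on $n$: the boundary of $M_{0,n}^\delta$ inside itself is a normal-crossings divisor whose multiple intersections are products $\prod_v M_{0,n(v)}^{\delta_v}$ with $n(v) < n$, pure of weight $2k$ in $H^k$ by induction. The Gysin map $H^{i-2}(D)(-1) \to H^i(M_{0,n}^\delta)$ is then a morphism of MHS from weight $2i-2$ into weight $2i$, hence vanishes; the Gysin long exact sequence forces the restriction map to be injective. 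For $(4) \Rightarrow (3)$: the same weight obstruction shows that every operadic composition $\Prim_m^i \otimes \Prim_n^j \to \Prim_{m+n-2}^{i+j}$ vanishes (source weight $2i+2j-4$, target weight $2i+2j-2$), and formality of $\chainPrim$ follows from a Deligne-style argument as in the proof of Theorem \ref{formality}, using Theorem \ref{2kformality} and the fact that purity makes the logarithmic forms on $\overline M_{0,n}$ with poles along the $\delta$-compatible boundary into a sub-model with trivial differential.

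The equivalence $(3) \Leftrightarrow (2)$ is bar-cobar duality: given $(3)$, Theorems \ref{planarduality} and \ref{formality} yield $\cobarp \coGrav \simeq \chainPrim \simeq \Prim$ with zero composition, hence $\coGrav \simeq \barp \cobarp \coGrav \simeq \barp \Prim$, which is the cofree antiplanar cooperad on $s \Prim \cong H^{\bullet-1}(M_{0,n}^\delta)$ (up to Tate twists); the converse runs symmetrically. For $(2) \Leftrightarrow (1)$: the forgetful functor from antiplanar to nonsymmetric cooperads preserves cofreeness, since distinguishing an output leg converts a planar unrooted tree to a planar rooted tree of one lower arity, and the canonical cyclic group action on the cogenerator quotient inherited from the antiplanar structure on $\coGrav_n$ promotes nonsymmetric cofreeness back to antiplanar cofreeness.

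To close the cycle, I would prove $(2) \Rightarrow (5)$ by identifying cogenerators with primitives. In any cofree cooperad the cogenerator quotient is canonically isomorphic to the subspace of primitive elements, namely those killed by all iterated cocompositions. For $\coGrav$ with its antiplanar cocomposition given by Poincar\'e residues along $\delta$-compatible boundary divisors, a class in $H^{\bullet-1}(M_{0,n})$ is primitive if and only if all such residues vanish, which is precisely the condition for the class to extend to $M_{0,n}^\delta$. Hence the primitive subspace of $\coGrav_n$ equals the image of the restriction map $H^{\bullet-1}(M_{0,n}^\delta) \to H^{\bullet-1}(M_{0,n})$; combined with $(2)$, which identifies this primitive space with $H^{\bullet-1}(M_{0,n}^\delta)$ itself, the image has dimension $\dim H^{\bullet-1}(M_{0,n}^\delta)$, so the restriction map is injective, giving $(5)$. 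The main obstacle I foresee is precisely this geometric identification: one must carefully verify that vanishing of all $\delta$-compatible Poincar\'e residues is equivalent to extendability to $M_{0,n}^\delta$, and that the abstract algebraic identification of cogenerators via the cofree structure matches the geometric realization through the chain-level duality of Theorem \ref{planarduality}.
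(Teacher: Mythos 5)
Much of your scaffolding matches the paper's proof: the paper also runs (5)$\Rightarrow$(4) by Proposition \ref{purityprop}, proves (4)$\Rightarrow$(3) by the weight-vanishing of compositions together with $\chainPrim \cong \cobarp\chaincoGrav \simeq \cobarp\coGrav$, and gets (3)$\Rightarrow$(2) by essentially your chain $\coGrav \simeq \chaincoGrav \simeq \barp\chainPrim \simeq \barp\Prim \cong \freeantipl(\Sigma\Prim)$. Your direct (4)$\Rightarrow$(5) via weight-forced degeneration of the Leray spectral sequence (Lemma \ref{leray} applied to the boundary of $\M_{0,n}^\delta$, whose closed strata are products of smaller $\M^\delta$'s) is sound and is actually a pleasant shortcut the paper does not take, since the paper reaches (5) only through (1). (Your alternative "Deligne-style" formality proof for $\chainPrim$ is more doubtful as sketched --- $\chainPrim$ is a current model with pushforward compositions, not a form model --- but this is inessential since the paper's route via the quasi-isomorphism with $\cobarp\coGrav$ is available to you.)

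The genuine gap is exactly where you enter the loop at (1) and where you close it at (2)$\Rightarrow$(5), and it is the obstacle you yourself flag. First, your claim that a class in $H^{\bullet}(\M_{0,n})$ with vanishing residues along all $\delta$-compatible divisors extends to $\M_{0,n}^\delta$ is not the single-divisor Gysin sequence: the boundary of the non-compact $\M_{0,n}^\delta$ is a normal crossing divisor, and the cooperadic residues land in the cohomology of the \emph{open} strata $D_i^\circ = D_i \setminus \bigcup_{j\neq i} D_j$. The naive component-by-component extension fails: having extended $\alpha$ over $D_1$, you must show the residue of the extension along $D_2 \setminus \bigcup_{j \geq 3} D_j$ vanishes, knowing only that its restriction to the smaller open $D_2^\circ$ vanishes --- and injectivity of that restriction is a statement of type (5) for partial compactifications of boundary strata, so the argument is circular unless one invokes the weight/spectral-sequence machinery. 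The paper sidesteps this entirely: it never characterizes primitives geometrically, but instead identifies the relevant map $\Sigma\Prim \to \mathrm{B}^{\mathrm{ns}}\Prim$ with the restriction map via the identification of the bar construction with the Leray spectral sequence of $\M_{0,n} \hookrightarrow \M_{0,n}^\delta$. Second, your (1)$\Rightarrow$(2) "promotion" argument, even granting that an equivariant splitting in characteristic zero plus rooting of planar trees and a dimension count can upgrade nonsymmetric cofreeness to antiplanar cofreeness (none of which you prove), delivers cofreeness on the \emph{primitive} collection of $\coGrav$; statement (2) additionally asserts the cogenerators are $H^{\bullet-1}(\M_{0,n}^\delta)$, and without knowing (5) you cannot identify the dimension of the primitives with that of $H^{\bullet-1}(\M_{0,n}^\delta)$ --- so your dimension count in (2)$\Rightarrow$(5) has nothing to bite on when the cycle is entered at (1). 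The paper closes precisely this hole with chain-level input you omit at this step: from $\coGrav = \mathrm{T}^{\mathrm{ns}}(M) = \mathrm{B}^{\mathrm{ns}}(\Sigma^{-1}M)$ it deduces $\Sigma^{-1}M \simeq \mathrm{\Omega}^{\mathrm{ns}}\coGrav \simeq \chainPrim$, hence $\Sigma^{-1}M \cong \Prim$ on cohomology, and then injectivity of $\Sigma\Prim \hookrightarrow \mathrm{B}^{\mathrm{ns}}\Prim$ yields (5). Rerouting your (1)$\Rightarrow$(2) and (2)$\Rightarrow$(5) through this nonsymmetric cobar computation would repair the proposal.
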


\begin{proof}
	\emph{(5) $\implies$ (4)} is clear, since we have already noted that $H^i(M_{0,n})$ has a pure Hodge structure of weight $2i$ (Proposition \ref{purityprop}).
	
	\emph{(4) $\implies$ (3).} There are quasi-isomorphisms $\chainPrim \cong \cobarp \chaincoGrav \simeq \cobarp \coGrav$ by Theorem \ref{planar} and formality of $\chaincoGrav$. Purity of $\Prim$ implies that the cohomology of $\cobarp \coGrav$ is concentrated in the summand corresponding to trees with a single vertex; that is, the edge map $\Prim \to \cobarp \coGrav$ is a quasi-isomorphism. 
	
	Moreover, the assumption implies that all the composition maps in $\Prim$ go between cohomology groups of different weights, as $\Prim^k_n = H^{k-2}(\M_{0,n}^\delta) \otimes \Q(-1)$ is of weight $2k-2$. The composition maps must therefore vanish. 
	
	\emph{(3) $\implies$ (2)} We have a chain of quasi-isomorphisms $\coGrav \simeq \chaincoGrav \simeq \barp \chainPrim \simeq \barp \Prim \cong \freeantipl(\Sigma\Prim)$, using (respectively) formality of $\chaincoGrav$, Theorem \ref{planar} and bar-cobar duality, the assumption that $\chainPrim$ is formal, and the assumption that the composition maps in $\Prim$ are zero.  
	
	\emph{(2) $\implies$ (1)} is trivial.
	
	\emph{(1) $\implies$ (5)}. By assumption we have $\coGrav = \mathrm{T}^\mathrm{ns}(M) = \mathrm{B}^{\mathrm{ns}}(\Sigma^{-1}M)$ for some collection $M$, where $\mathrm{T}^\mathrm{ns}$ denotes the cofree conilpotent nonsymmetric cooperad functor. By the nonsymmetric version of Theorem \ref{planar}, bar-cobar duality and formality of $\chaincoGrav$, it follows that $\Sigma^{-1}M \simeq \mathrm{\Omega}^{\mathrm{ns}} \coGrav \simeq \chainPrim$. Taking cohomology we see that $\Sigma^{-1}M \cong \Prim$. But the inclusion of $\Sigma\Prim$ into $\mathrm{B}^{\mathrm{ns}} \Prim$ as the summand corresponding to trees with a single vertex is an injection, and then so must $H^i(M_{0,n}^\delta) \to H^i(M_{0,n})$ be. (The identification of the two maps $H^\bullet(M_{0,n}^\delta) \to H^\bullet(M_{0,n})$ and $\Sigma\Prim \to \mathrm{B}^{\mathrm{ns}} \Prim$ follows from our identification of said bar construction with the Leray spectral sequence for $\M_{0,n} \to \M_{0,n}^\delta$.) \end{proof}

\begin{rem}
	The fact that the composition maps in the operad $\Prim$ are all zero can be given an easy proof independent of the rest of the results in this paper. We need to show that the Gysin maps
	$$ \pi_\ast \colon H^k(\M_{0,n+1}^\delta \times M_{0,m+1}^\delta) \to H^{k+2}(M_{0,n+m}^\delta)$$
	are all zero. When $k=0$, this is the same as saying that the cohomology class of the boundary divisor $D = \M_{0,n+1}^\delta\times M_{0,m+1}^\delta \subset \M_{0,n+m}^\delta$ is zero. If this boundary divisor corresponds to a chord in an $(n+m)$-gon, consider the WDVV relation on $\MM_{0,n+m}$ corresponding to the $4$ marked points that are ``adjacent'' to this chord. This is a linear relation between boundary divisors on $\MM_{0,n+m}$, all of which except $D$ are outside of $\M_{0,n+m}^\delta$. Thus $[D]=0$ in $H^2(\M_{0,n+m}^\delta)$. 
	
	Secondly, we observe that the pullback map $\pi^\ast \colon H^\bullet(M_{0,n+m}^\delta) \to H^\bullet(\M_{0,n+1}^\delta\times M_{0,m+1}^\delta)$ is surjective. Indeed, the inclusion of this divisor is a retract; a left inverse is given by a product of two forgetful maps. 
	
	Finally, to prove this vanishing also in higher degrees, we use the projection formula. Take $\alpha \in H^\bullet(\M_{0,n+1}^\delta\times M_{0,m+1}^\delta)$ and let $\alpha = \pi^\ast(\beta)$ for some $\beta$. Then
	$$\pi_\ast(\alpha) = \pi_\ast(1 \cdot \pi^\ast(\beta)) = \pi_\ast(1) \cdot \beta = 0,$$
	since we have already verified that $\pi_\ast(1) = 0$. 	
\end{rem}

\section{Freedom}\label{sec:freedom}

\subsection{Arrangements of hyperplanes}
\newcommand{\Jarc}{J_n^{\mathrm{arc}}}
\newcommand{\Jchord}{J_n^{\mathrm{chord}}}
Let $\{H_c\}_{c \in C}$ be a finite set of affine hyperplanes in $\C^n$. Let us write $U = \C^n \setminus \bigcup_{c \in C} H_c$ for the complement of the hyperplane arrangement. We denote by $f_c$ the linear form corresponding to the hyperplane $H_c$. By a theorem of Brieskorn \cite{brieskorn}, the cohomology ring $H^\bullet(U,\Z)$ is isomorphic to the subalgebra of the de Rham complex of $U$ generated by the $1$-forms $\frac{1}{2i\pi} d \log (f_c)$. Let $\wedge^\bullet C$ denote the exterior algebra generated by the set $C$. We write $e_c$ for the generator corresponding to $c \in C$, and we put
$$ e_S = \bigwedge_{c \in S} e_c$$
for $S \subseteq C$. (Unless $C$ is ordered, $e_S$ is only well defined up to a sign.) Thus we have the canonical surjection $\wedge^\bullet C \to H^\bullet(U,\Z)$, taking $e_c$ to the differential form $\frac{1}{2i\pi} d \log (f_c)$. 
Let $J$ denote the kernel of this surjection. One can define combinatorially a set of generators for $J$. For this we shall need some terminology.

\begin{defn}
	A subset $S \subseteq C$ is \emph{dependent} if the intersection of the corresponding hyperplanes is not transverse. An inclusion-minimal dependent subset is called a \emph{circuit.}
\end{defn}
\begin{defn}
	Suppose given a total order $\prec$ on $C$. A \emph{broken circuit} is a subset of $C$ of the form $A \setminus \min(A)$, where $A$ is a circuit. 
\end{defn}
\begin{defn}
	A subset $S \subseteq C$ is called an \emph{nbc-set} if no subset of $S$ forms a broken circuit and $\bigcap_{c \in S} H_c \neq \varnothing$. 
\end{defn}

 Let $\partial \colon \wedge^kC \to \wedge^{k-1}C$ be the Koszul differential, i.e. the unique derivation with $\partial(e_c)=1$ for all $c \in C$. 

The following two theorems are fundamental in the theory of hyperplane arrangements. For a textbook treatment we recommend \cite[Chapter 3]{orlikterao}. The original references (in the case of a central arrangement) are \cite{orliksolomon,gelfandzelevinskiihypergeometric,jambuterao}.

\begin{thm}[Orlik--Solomon] \label{orliksolomon}The ideal $J$ is generated by the elements $\partial e_A$, where $A$ ranges over the circuits in $C$, and $e_S$, where $S$ ranges over the subsets of $C$ for which $\bigcap_{c \in S} H_c = \varnothing$. 
\end{thm}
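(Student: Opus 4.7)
My plan is to follow the classical deletion-restriction argument. Let $J_0 \subseteq \wedge^\bullet C$ denote the ideal generated by the claimed elements. The first half is to verify $J_0 \subseteq J$. For $e_S$ with $\bigcap_{c \in S} H_c = \varnothing$, the affine Nullstellensatz produces constants $\mu_c$ with $\sum_{c \in S} \mu_c f_c = 1$; differentiating and wedging, this forces the product of the corresponding $\mathrm d\log(f_c)$ to vanish on $U$. For a circuit $A = \{c_0,\ldots,c_k\}$, minimality yields a linear relation $\sum_i \lambda_i f_{c_i} = \mu$ with every $\lambda_i$ nonzero, which on $U$ rearranges into the boundary identity $\sum_i (-1)^i \prod_{j \neq i} \mathrm d\log(f_{c_j}) = 0$ — precisely the image of $\partial e_A$.

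The harder half is the reverse inclusion $J \subseteq J_0$, equivalently that the induced surjection $\mathsf{OS}(\mathcal A) := \wedge^\bullet C/J_0 \twoheadrightarrow H^\bullet(U)$ is an isomorphism. Since both sides are finite-dimensional graded vector spaces, it suffices to match Poincar\'e polynomials. I would do this by induction on $\vert C \vert$: choose a hyperplane $H_{c_0}$, set $\mathcal A' = \mathcal A \setminus \{H_{c_0}\}$ with complement $U'$, and form the restriction $\mathcal A''$ on $H_{c_0}$ with complement $U'' \subseteq H_{c_0}$. On the topological side, the Gysin/Thom long exact sequence of the pair $(U',U)$ reads
\[
\cdots \to H^{k-1}(U) \to H^{k-2}(U'') \to H^k(U') \to H^k(U) \to \cdots,
\]
and splits into short exact pieces because the residue along $H_{c_0}$ admits an explicit splitting by wedging with $\mathrm d\log(f_{c_0})$. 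On the algebraic side, filtering $\mathsf{OS}(\mathcal A)$ by the subspaces not containing $e_{c_0}$ gives a short exact sequence $0 \to \mathsf{OS}(\mathcal A') \to \mathsf{OS}(\mathcal A) \to \mathsf{OS}(\mathcal A'')[-1] \to 0$, compatible with the surjections onto cohomology. The five-lemma and induction then close the argument; the base case of an empty arrangement is trivial.

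The main obstacle is the algebraic short exact sequence: one must verify that dividing out the circuit relations for $\mathcal A$ and separating out terms involving $e_{c_0}$ correctly yields the Orlik--Solomon algebra of the restriction $\mathcal A''$. Concretely, circuits of $\mathcal A''$ arise either as circuits of $\mathcal A$ not containing $c_0$ or from circuits of $\mathcal A$ containing $c_0$ after the appropriate identification of restricted hyperplanes, and one must check that these, together with the empty-intersection relations transported from $\mathcal A$ to $\mathcal A''$, generate the kernel of the natural projection. This is a combinatorial bookkeeping step built on Whitney's theorem on broken circuits; once the bookkeeping is in place (so that the nbc-sets of $\mathcal A$ biject with the disjoint union of nbc-sets of $\mathcal A'$ and of $\mathcal A''$ after deletion of $c_0$), the inductive Poincar\'e polynomial comparison is immediate. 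I would finally remark that in the affine (noncentral) case the generators $e_S$ with $\bigcap H_c = \varnothing$ are genuinely needed precisely to handle the ``parallel'' relations that have no analogue in the linear case.
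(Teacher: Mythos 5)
The paper offers no proof of this statement at all: it is quoted as one of two ``fundamental theorems in the theory of hyperplane arrangements,'' with the reader referred to Orlik--Terao (Chapter 3) for a textbook treatment and to the original sources (which treat the central case). So there is no internal argument to compare against; the benchmark is the classical proof, and your plan is essentially that proof --- Brieskorn's surjectivity, the deletion--restriction short exact sequence $0 \to \mathsf{OS}(\mathcal A') \to \mathsf{OS}(\mathcal A) \to \mathsf{OS}(\mathcal A'')[-1] \to 0$ on the algebraic side, the Gysin sequence for $U \subset U'$ with residue and pushforward on the topological side, and an induction on $\vert C \vert$ matching Poincar\'e polynomials via the common recursion. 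Your outline of these steps, including the acknowledged nbc/Whitney bookkeeping needed for exactness of the algebraic sequence and the identification of circuits of $\mathcal A''$, is the standard route and is sound.

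There is, however, one step that is false as literally written. For a circuit $A$ you assert a relation $\sum_i \lambda_i f_{c_i} = \mu$ with all $\lambda_i \neq 0$ and claim it ``rearranges into the boundary identity'' $\sum_i (-1)^i \prod_{j \neq i} \mathrm{d}\log(f_{c_j}) = 0$. This holds only when $\mu = 0$. If $\mu \neq 0$ it fails: for the parallel hyperplanes $f_0 = x$, $f_1 = x-1$ in $\C^2$ one has $f_0 - f_1 = 1$, yet $\mathrm{d}\log(x-1) - \mathrm{d}\log(x)$ is a nonzero class in $H^1(U)$, so $\partial e_{\{0,1\}} \notin J$. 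The resolution is a point of convention that you must make explicit: in the affine setting ``dependent'' must mean non-transverse \emph{and nonempty} intersection (this is the Orlik--Terao convention, and it is forced, since otherwise a parallel pair would be a circuit and the theorem's list of generators would not even lie in $J$). With that convention, minimality forces all $\lambda_i \neq 0$ (any proper subset still meets $\bigcap_{c \in A} H_c \neq \varnothing$), and evaluating the affine relation at a point of $\bigcap_{c \in A} H_c$ gives $\mu = 0$ automatically; after rescaling the $f_{c_i}$ (which leaves $\mathrm{d}\log f_{c_i}$ unchanged) the boundary identity then holds identically as a differential form, not merely in cohomology. Minimal subsets with empty intersection are covered exclusively by the $e_S$-generators --- your closing remark about ``parallel'' relations shows you know this, but the circuit step must be restated with $\mu = 0$ or the inclusion $J_0 \subseteq J$ breaks at precisely these sets. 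One smaller caveat: the splitting of the Gysin sequence by wedging with $\mathrm{d}\log(f_{c_0})$ presupposes that classes on $U''$ are represented by restrictions of logarithmic forms of $\mathcal A$, which is available only through the inductive hypothesis for $\mathcal A''$; this is consistent with your induction but should be said.
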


\begin{thm}[Gel'fand--Zelevinski\u\i, Jambu--Terao] \label{nbc} The elements $e_S$, where $S$ ranges over the nbc-sets in $C$, give a basis for $H^\bullet(U,\Z)$. 
\end{thm}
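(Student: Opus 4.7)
The plan is to deduce the theorem from Theorem \ref{orliksolomon} in two steps: first, a ``straightening'' argument using the circuit and non-intersection relations to show that the nbc-monomials span $H^\bullet(U,\Z)$; second, a deletion-restriction induction on $\vert C \vert$ to prove linear independence.

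For spanning, fix the given total order $\prec$ on $C$. The monomials $e_S$ with $\bigcap_{c \in S} H_c = \varnothing$ vanish by the second family of generators of $J$ in Theorem \ref{orliksolomon}. If $S$ contains a broken circuit $B = A \setminus \{c_0\}$ with $c_0 = \min(A)$ and $A = \{c_0 \prec c_1 \prec \cdots \prec c_k\}$, then the Koszul relation $\partial e_A = 0$ holds in $H^\bullet(U,\Z)$ and, after wedging with $e_{S \setminus B}$, lets us rewrite $e_S$ (up to sign) as a sum of monomials $e_{S'}$ in which the smaller element $c_0$ necessarily appears. A Gr\"obner-style induction on a well-founded order refining this reduction (for instance a reverse-lex order on sorted tuples, so that introducing a smaller element strictly decreases complexity) terminates with an expression of $e_S$ in terms of nbc-monomials.

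For linear independence, I would proceed by induction on $\vert C \vert$. Let $c$ be the maximum element and set $C' = C \setminus \{c\}$, with $C''$ the restriction arrangement on $H_c$ whose hyperplanes are the nonzero intersections $H_c \cap H_{c'}$ for $c' \in C'$. The classical deletion-restriction short exact sequence of Orlik-Solomon algebras
\[
0 \to \mathrm{OS}^\bullet(C') \to \mathrm{OS}^\bullet(C) \xrightarrow{\rho} \mathrm{OS}^{\bullet - 1}(C'') \to 0
\]
splits the nbc-sets of $C$ into those avoiding $c$, which coincide with the nbc-sets of $C'$, and those containing $c$, which $\rho$ sends bijectively onto the nbc-sets of $C''$ under the induced order. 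By induction, both collections are linearly independent in their respective cohomology rings, hence the full nbc-collection is independent in $H^\bullet(U,\Z)$.

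The main obstacle is establishing the restriction map $\rho$ and verifying that it is well-defined: one must show that $\rho$ carries the circuit and non-intersection relations of $C$ into genuine relations in $\mathrm{OS}(C'')$, which is a matroid-theoretic statement about how circuits behave under matroid contraction. One also has to verify carefully the bijection between nbc-sets of $C$ containing $c$ and nbc-sets of $C''$, since new circuits may appear after contraction. Once this compatibility is in place and the dictionary between nbc-sets of $C$, $C'$, and $C''$ is checked, the induction runs cleanly from the trivial base case of a single hyperplane.
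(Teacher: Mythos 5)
The paper does not actually prove this theorem: it refers to \cite[Chapter 3]{orlikterao} and the original papers, and the only argument it supplies is the spanning half, given as Remark \ref{osrelations}. Your first step reproduces that remark essentially verbatim (straighten with the relations $\partial e_A=0$, kill the monomials with $\bigcap_{c\in S}H_c=\varnothing$ outright by Theorem \ref{orliksolomon}, terminate by a well-founded term order), so the substantive comparison point is your independence argument, which goes beyond anything in the paper and follows the classical deletion--restriction route of the cited literature. Your combinatorial choices there are correct: since broken circuits are formed by deleting \emph{minima}, deleting the $\prec$-\emph{maximal} element $c$ is exactly what makes the dictionary work --- for every circuit $A\ni c$ one has $c\neq\min(A)$, so a broken circuit contained in $C'=C\setminus\{c\}$ can only come from a circuit of $C'$, whence nbc-sets of $C$ avoiding $c$ coincide with nbc-sets of $C'$; and if $H\prec H'$ meet $H_c$ in the same subspace, the circuit $\{H,H',c\}$ has broken circuit $\{H',c\}$, which forces an nbc-set containing $c$ to meet each fiber of the restriction $C'\to C''$ only in its $\prec$-least preimage, which is what makes your map on nbc-sets well defined. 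The verifications you flag are real but standard.

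The one genuine concern is circularity rather than correctness. The left-exactness of your sequence --- injectivity of $\mathrm{OS}^\bullet(C')\to\mathrm{OS}^\bullet(C)$ --- is precisely the hard part of the ``classical'' triple sequence; in the standard treatments it is proved \emph{together with}, not prior to, the nbc theorem (surjectivity of $\rho$ and exactness in the middle are the elementary parts), so invoking the full short exact sequence as a black box comes close to assuming what is being proved. Two repairs are available. One is a simultaneous induction establishing exactness and the basis statement at once. The other, more in the spirit of this paper, is to replace the algebraic sequence by a dimension count from topology: writing $U'$ for the complement of the deleted arrangement and $U''=U'\cap H_c$ for the complement of the restricted arrangement in $H_c$, the Gysin sequence of the closed inclusion $U''\hookrightarrow U'$ with open complement $U$ is a sequence of mixed Hodge structures, and since $H^k$ of any arrangement complement is pure of weight $2k$ (the argument of Proposition \ref{purityprop}), the connecting maps $H^{k-1}(U'')(-1)\to H^{k+1}(U')$ vanish for weight reasons. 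This yields $\dim H^k(U,\Q)=\dim H^k(U',\Q)+\dim H^{k-1}(U'',\Q)$, which combined with your nbc bijections and induction shows the number of nbc-monomials equals the rank; since they also span over $\Z$ by your first step, they form a $\Z$-basis (a generating set of a finitely generated abelian group whose cardinality equals its free rank is automatically a basis).
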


\begin{rem}\label{osrelations}The easier half of Theorem \ref{nbc} is that the elements $e_S$, where $S$ ranges over the nbc-sets in $C$, \emph{span} $H^\bullet(U,\Z)$. Indeed, if $A$ is a circuit, then the equation $\partial e_A = 0$ is a linear relation in which exactly one of the terms corresponds to a broken circuit. Thus the Orlik--Solomon relations of Theorem \ref{orliksolomon} can be successively used to eliminate broken circuits. In doing so, we may introduce new broken circuits; nevertheless, this procedure must terminate, since we are always replacing monomials with ones that are strictly smaller with respect to (say) the lexicographic order on the set of monomials with respect to $\prec$.\end{rem} 

Suppose that $S$ is a polynomial or exterior algebra equipped with a term order $\prec$ on its monomials. For $f \in S$, we write $\mathrm{in}(f)$ for the \emph{initial term} of $f$, i.e.\ the monomial which is largest with respect to $\prec$. If $I \subset S$ is an ideal, then we write $\mathrm{in}(I)$ for its \emph{initial ideal}, the ideal generated by $\mathrm{in}(f)$ for $f \in I$. Recall that a \emph{Gr\"obner basis} for $I$ is a set of generators $f_1,\ldots,f_k$ for $I$ such that $\mathrm{in}(f_1),\ldots,\mathrm{in}(f_k)$ generate $\mathrm{in}(I)$. A \emph{standard monomial} is a monomial in $S$ which is not in $\mathrm{in}(I)$. The set of all standard monomials forms a basis for the quotient $S/I$. 

Theorem \ref{nbc} may be reformulated in these terms by saying that the generators for the ideal $J$ given by Theorem \ref{orliksolomon} are in fact a \emph{Gr\"obner basis} for $J$ over the integers, for any choice of total order $\prec$. (It will not in general be a reduced Gr\"obner basis.) Indeed, for any circuit $A$, 
$$ \mathrm{in}(\partial e_A) = \pm e_{A \setminus \min(A)}, $$
so a broken circuit is nothing but the leading term of one of the Orlik--Solomon relations. Thus the basis for $H^\bullet(U,\Z)$ of nbc-sets is the basis of standard monomials with respect to this Gr\"obner basis. 

\subsection{Moduli space $M_{0,n}$ and arc diagrams} \label{sectionarc} Consider the moduli space $M_{0,n}$, parametrizing distinct ordered points $z_1,\ldots,z_{n}$ on $\P^1$ modulo the action of $\mathrm{PGL}(2)$. Using the gauge freedom to fix the marked points \(z_1,\,z_{n-1},\,z_{n} = 0,\,1,\,\infty\) leaves a configuration of distinct points $z_2,\ldots,z_{n-2}$ in $\C \setminus \{0,1\}$.  This gives an identification of $M_{0,n}$ with the complement of a hyperplane arrangement. We will find it convenient to write the corresponding affine space as
$$ \{(z_1,z_2,\ldots,z_{n-1}) \in \C^{n-1} : z_1=0, z_{n-1}=1\}$$
in which case the hyperplanes can be written in a uniform way as $\{z_i-z_j=0\}$, for $1 \leq i, j \leq n-1$. Let us write $C_n$ for the set of unordered pairs $\{i,j\}$ with $1 \leq i , j \leq n-1$, $i \neq j$, and $\{i,j\}\neq \{1,n-1\}$. Then our set of hyperplanes can be written as $\{H_{i,j}\}_{\{i,j\} \in C_n}$. 

We define the \emph{length} of the pair $\{i,j\}$ to be $\vert i - j \vert$. Let $\prec$ be an arbitrary total order on $C_n$ refining the partial order by \emph{reverse} length; that is, if $\vert i - j \vert > \vert k - l \vert$, then $\{i,j\} \prec \{k,l\}$.

Let $e_{ij}$ be the generator for $\wedge^\bullet C_n$ corresponding to $\{i,j\}$. We draw basis elements for $\wedge^\bullet C_n$ graphically by marking the points \(1,\dots,n-1\in \R\) as vertices on the boundary of an upper half plane \(\R\times\R_{\geq 0}\), and for each generator \(e_{ij}\) in the monomial we draw an arc in the upper half-plane with endpoints \(i\) and \(j\). We call such a basis element an \emph{arc diagram}. We write $\omega_{ij}$ for the differential form $\frac{1}{2i\pi} d\log(z_i-z_j)$, so that the natural map $\wedge^\bullet C_n \to H^\bullet(M_{0,n})$ takes $e_{ij}$ to the class represented by the differential form $\omega_{ij}$. Let $\Jarc$ denote the kernel of this map.

As an example of our notation, the ``Arnol'd relation'' \(\omega_{ij}\omega_{jk} + \omega_{ik}\omega_{ij} - \omega_{ik}\omega_{jk} = 0\) (which is exactly the Orlik--Solomon relation corresponding to the circuit $\{\{i,j\},\{i,k\},\{j,k\}\} \subset C_n$) can be interpreted as saying that the sum
\[
\begin{tikzpicture}[baseline=0ex,shorten >=0pt,auto,node distance=2cm]
\node[ve] (z1) {};
\node[ve] (z2) [right of=z1] {};
\node[ve] (z3) [right of=z2] {};  
\path[every node/.style={font=\sffamily\small}]
(z1) edge[out=75,in=105] (z2)
(z2) edge[out=75,in=105] (z3);   
\end{tikzpicture} 
+
\begin{tikzpicture}[baseline=0ex,shorten >=0pt,auto,node distance=2cm]
\node[ve] (z1) {};
\node[ve] (z2) [right of=z1] {};
\node[ve] (z3) [right of=z2] {};  
\path[every node/.style={font=\sffamily\small}]
(z1) edge[out=75,in=105] (z3)
(z1) edge[out=75,in=105] (z2);
\end{tikzpicture}
-
\begin{tikzpicture}[baseline=0ex,shorten >=0pt,auto,node distance=2cm]
\node[ve] (z1) {};
\node[ve] (z2) [right of=z1] {};
\node[ve] (z3) [right of=z2] {};  
\path[every node/.style={font=\sffamily\small}]
(z1) edge[out=75,in=105] (z3)
(z2) edge[out=75,in=105] (z3);   
\end{tikzpicture}
\] 
goes to zero under $\wedge^\bullet C_n \to H^\bullet (M_{0,n})$; the three vertices in each term are assumed to be labeled $i$, $j$ and $k$, respectively. 

We observe that a circuit in $C_n$ is exactly an arc diagram for which the corresponding graph is a closed cycle, such as the following three arc diagrams:
\[
\begin{tikzpicture}[baseline=0ex,shorten >=0pt,auto,node distance=2cm]
\node[ve] (z1) {};
\node[ve] (z2) [right of=z1] {};
\node[ve] (z3) [right of=z2] {};  
\path[every node/.style={font=\sffamily\small}]
(z1) edge[out=75,in=105] (z3)
(z2) edge[out=75,in=105] (z3)
(z1) edge[out=75,in=105] (z2);    
\end{tikzpicture}, \qquad \qquad
\begin{tikzpicture}[baseline=0ex,shorten >=0pt,auto,node distance=2cm]
\node[ve] (z1) {};
\node[ve] (z2) [right of=z1] {};
\node[ve] (z3) [right of=z2] {};  
\node[ve] (z4) [right of=z3] {};  
\path[every node/.style={font=\sffamily\small}]
(z1) edge[out=75,in=105] (z4)
(z2) edge[out=75,in=105] (z4)
(z2) edge[out=75,in=105] (z3)
(z1) edge[out=75,in=105] (z3);    
\end{tikzpicture},\qquad \text{and} \qquad
\begin{tikzpicture}[baseline=0ex,shorten >=0pt,auto,node distance=2cm]
\node[ve] (z1) {};
\node[ve] (z2) [right of=z1] {};
\node[ve] (z3) [right of=z2] {};  
\node[ve] (z4) [right of=z3] {};
\node[ve] (z5) [right of=z4] {};  
\node[ve] (z6) [right of=z5] {};    
\path[every node/.style={font=\sffamily\small}]
(z1) edge[out=75,in=105] (z4)
(z3) edge[out=75,in=105] (z6)
(z2) edge[out=75,in=105] (z6)
(z1) edge[out=75,in=105] (z2)
(z3) edge[out=75,in=105] (z5)
(z4) edge[out=75,in=105] (z5);    
\end{tikzpicture} .
\]
A broken circuit is an arc diagram from which the $\prec$-smallest arc has been removed, and thus the corresponding graph is a \emph{path}. The broken circuits corresponding to the above three circuits would be
\[
\begin{tikzpicture}[baseline=0ex,shorten >=0pt,auto,node distance=2cm]
\node[ve] (z1) {};
\node[ve] (z2) [right of=z1] {};
\node[ve] (z3) [right of=z2] {};  
\path[every node/.style={font=\sffamily\small}]
(z2) edge[out=75,in=105] (z3)
(z1) edge[out=75,in=105] (z2);    
\end{tikzpicture}, \qquad \qquad
\begin{tikzpicture}[baseline=0ex,shorten >=0pt,auto,node distance=2cm]
\node[ve] (z1) {};
\node[ve] (z2) [right of=z1] {};
\node[ve] (z3) [right of=z2] {};  
\node[ve] (z4) [right of=z3] {};  
\path[every node/.style={font=\sffamily\small}]
(z2) edge[out=75,in=105] (z4)
(z2) edge[out=75,in=105] (z3)
(z1) edge[out=75,in=105] (z3);    
\end{tikzpicture},\qquad \text{and} \qquad
\begin{tikzpicture}[baseline=0ex,shorten >=0pt,auto,node distance=2cm]
\node[ve] (z1) {};
\node[ve] (z2) [right of=z1] {};
\node[ve] (z3) [right of=z2] {};  
\node[ve] (z4) [right of=z3] {};
\node[ve] (z5) [right of=z4] {};  
\node[ve] (z6) [right of=z5] {};    
\path[every node/.style={font=\sffamily\small}]
(z1) edge[out=75,in=105] (z4)
(z3) edge[out=75,in=105] (z6)
(z1) edge[out=75,in=105] (z2)
(z3) edge[out=75,in=105] (z5)
(z4) edge[out=75,in=105] (z5);    
\end{tikzpicture} .
\]
In these three examples there was a unique arc of maximal length in the circuit, which is then necessarily the smallest arc under $\prec$; recall that we allowed $\prec$ to be an arbitrary linear extension of the partial order by reverse arc-length. 

\begin{prop} \label{arcdiagram} The ``no broken circuit''-basis of $H^\bullet(M_{0,n})$ provided by Theorem \ref{nbc} consist exactly of those arc diagrams  which do not contain vertices $i<j<k$ such that the monomial has a factor $e_{ij}e_{jk}$, and which do not contain vertices $i<j<k<l$ such that $e_{ik}e_{jk}e_{jl}$ is a factor.\end{prop}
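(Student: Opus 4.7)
The plan is to apply Theorem \ref{nbc} after identifying the matroid and the ``empty intersection'' sets for the hyperplane arrangement describing $M_{0,n}$. The hyperplanes $\{H_{ij}\}_{\{i,j\}\in C_n}$ form an affine arrangement whose matroid is the graphic matroid of the complete graph $K_{n-1}$ on the vertex set $\{1,\dots,n-1\}$ with the edge $\{1,n-1\}$ deleted; circuits are therefore cycles of this graph. Moreover, the fixing $z_1=0$ and $z_{n-1}=1$ makes the condition $\bigcap_{c\in S}H_c \neq \varnothing$ of Theorem \ref{nbc} equivalent to the combinatorial statement that $S$, viewed as an edge set, does not contain a path from $1$ to $n-1$ in $K_{n-1}$. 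Since $\prec$ refines reverse arc-length, the $\prec$-smallest edge of any cycle is the longest, and the broken circuit of a cycle is obtained by removing that longest edge.

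The forward direction of the proposition falls out of these identifications. For a triangle on $\{i,j,k\}$ with $i<j<k$ and $\{i,k\}\neq\{1,n-1\}$, the broken circuit is exactly $\{e_{ij},e_{jk}\}$, which is configuration (a); in the remaining case $\{i,k\}=\{1,n-1\}$ the same pair is instead a minimal empty-intersection set (the length-two path $1-j-(n-1)$), again matching (a). Similarly, the 4-cycle $i-k-j-l-i$ with $i<j<k<l$ and $\{i,l\}\neq\{1,n-1\}$ has longest edge $(i,l)$ and broken circuit $\{e_{ik},e_{jk},e_{jl}\}$, which is configuration (b); the case $\{i,l\}=\{1,n-1\}$ produces the minimal length-three path $(1,k),(j,k),(j,n-1)$ from $1$ to $n-1$, again matching (b). Hence any arc diagram containing (a) or (b) fails the nbc condition.

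The main obstacle is the converse: every broken circuit of a cycle of length $\geq 3$, and every minimal empty-intersection set, must contain configuration (a) or (b) as a sub-diagram. The key combinatorial lemma is a \emph{zigzag rigidity}: if a path $v_0-v_1-\cdots-v_m$ avoids (a), i.e.\ contains no three consecutive vertices $v_{r-1},v_r,v_{r+1}$ with $v_r$ numerically between its neighbours, then each interior vertex $v_r$ is a local maximum or local minimum of the label sequence, so the path strictly zigzags. For a path from $1$ to $n-1$, $v_0=1$ is the global minimum and $v_1$ is forced to be a local maximum; avoiding (b) at $v_1$ then forces $v_3<v_1$, and an induction along the path propagates this constraint to produce a strictly decreasing chain of local maxima $v_1 > v_3 > \cdots > v_{m-2}$. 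At the last interior local maximum $v_{m-2}$, however, a short case split on whether $v_{m-1}>v_{m-3}$ or $v_{m-3}>v_{m-1}$ forces (b): in the first case the other path-neighbour $v_m=n-1$ of $v_{m-1}$ is larger than $v_{m-2}$ and supplies the $l$-vertex of (b), while in the second case avoiding (b) would require $v_{m-4}\leq v_{m-2}$, contradicting the decreasing chain. The analogous statement for broken-circuit paths between the endpoints $u<w$ of the removed longest edge is handled by the same inductive mechanism, with the longest-edge bound $|v_r-v_{r+1}|\leq w-u$ providing the rigidity at the right end of the zigzag.

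Combining both directions with Theorem \ref{nbc} identifies the nbc basis of $H^\bullet(M_{0,n})$ with exactly the set of arc diagrams avoiding configurations (a) and (b), which is the statement of the proposition.
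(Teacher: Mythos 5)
Your setup and forward direction coincide with the paper's: the paper also invokes Theorem \ref{nbc}, identifies circuits with cycle diagrams and broken circuits with paths missing the $\prec$-smallest (hence longest) arc, and treats empty intersections via paths joining the vertices $1$ and $n-1$. In fact you are slightly \emph{more} careful than the paper in the forward direction, where you explicitly dispose of the boundary cases $\{i,k\}=\{1,n-1\}$ and $\{i,l\}=\{1,n-1\}$ (where the triangle or $4$-cycle is not a circuit in $C_n$, and the configuration is instead an empty-intersection set); the paper glosses over this. For the converse on $1$-to-$(n-1)$ paths, your window-by-window induction is sound but differs in bookkeeping from the paper, which argues that avoiding $e_{ij}e_{jk}$ forces a zigzag with no two consecutive arcs of equal length, that avoiding $e_{ik}e_{jk}e_{jl}$ then forces the sequence of arc lengths to be \emph{unimodal}, and derives the contradiction from that. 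One small under-specification in your induction: the decreasing chain of local maxima $v_1 > v_3 > \cdots$ does not propagate on its own; at the window $(v_1,v_2,v_3,v_4)$, read in reverse, $(b)$-avoidance together with $v_3<v_1$ forces $v_4>v_2$, and it is the \emph{interleaved} invariant (maxima strictly decreasing \emph{and} minima strictly increasing) that carries the induction, using the orientation-reversed form of configuration $(b)$ at alternate steps. Your endgame case split is correct as stated.

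The genuine gap is the broken-circuit case, which you dispatch in one sentence by appeal to ``the same inductive mechanism.'' It does not transfer as anchored: your induction starts because $v_0=1$ is the \emph{global minimum} of the vertex labels, which makes the $(b)$-condition at the first window live ($v_0<v_2$ automatically) and forces $v_3<v_1$. For a broken-circuit path $u=v_0,\ldots,v_m=w$ the endpoints of the removed edge need not be extremal among the path's vertices --- the path may begin by stepping downward from $u$ or overshoot $w$ --- so the chain cannot be started at $v_0$, and the phrase ``rigidity at the right end'' does not supply an argument. The repair is to re-anchor the induction at the \emph{longest} arc of the path: its two neighbors are strictly shorter (zigzag plus maximality, noting equal consecutive lengths are impossible in a zigzag), which restarts the nesting, and $(b)$-avoidance then propagates it outward on both sides, so every vertex of the path lies in the interval spanned by the longest arc, with the endpoints $u,w$ strictly inside unless one of them is an endpoint of that arc. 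Since at least one of $u,w$ lies strictly inside, the missing arc $\{u,w\}$ is strictly shorter than the longest path arc, contradicting that the removed edge was the $\prec$-minimum of the circuit (here the bound $|v_r-v_{r+1}|\leq w-u$ enters, and only here). Carried out this way, your mechanism reproduces exactly the paper's unimodality-and-containment argument, which handles both the broken-circuit case and the $1$-to-$(n-1)$ case uniformly.
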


\begin{proof}It is clear that if an arc diagram has a factor $e_{ij}e_{jk}$ or $e_{ik}e_{jk}e_{jl}$, then it contains a broken circuit; specifically, these are exactly the first two broken circuits given as examples before the lemma.
	
	Conversely, we need to show that any broken circuit, and any arc diagram for which the corresponding set of hyperplanes has empty intersection, contains one of these two as a factor. Take first an arc diagram corresponding to a broken circuit, and suppose it does not contain any factor of the form 
	\[
	\begin{tikzpicture}[baseline=0ex,shorten >=0pt,auto,node distance=2cm]
	\node[ve] (z1) {};
	\node[ve] (z2) [right of=z1] {};
	\node[ve] (z3) [right of=z2] {};  
	\path[every node/.style={font=\sffamily\small}]
	(z2) edge[out=75,in=105] (z3)
	(z1) edge[out=75,in=105] (z2);    
	\end{tikzpicture}.\]
	This means that the path given by the arc diagram switches directions from left to right at each step. In particular, it is not possible for two successive arcs to have the same length.  Now suppose also that the broken circuit does not  have a factor
	\[\begin{tikzpicture}[baseline=0ex,shorten >=0pt,auto,node distance=2cm]
	\node[ve] (z1) {};
	\node[ve] (z2) [right of=z1] {};
	\node[ve] (z3) [right of=z2] {};  
	\node[ve] (z4) [right of=z3] {};  
	\path[every node/.style={font=\sffamily\small}]
	(z2) edge[out=75,in=105] (z4)
	(z2) edge[out=75,in=105] (z3)
	(z1) edge[out=75,in=105] (z3);    
	\end{tikzpicture}.\]
	Equivalently, at no step of the path is there an arc which has shorter length than both its neighbors. It follows that the sequence of arc-lengths along the path is \emph{unimodal}: the arc-lengths are first strictly increasing, until they reach a maximum and become strictly decreasing. Thus the path would look e.g. as follows: 
	\[\begin{tikzpicture}[baseline=0ex,shorten >=0pt,auto,node distance=2cm]
	\node[ve] (z1) {};
	\node[ve] (z2) [right of=z1] {};
	\node[ve] (z3) [right of=z2] {};  
	\node[ve] (z4) [right of=z3] {}; 
	
	\node[ve] (z5) [right of=z4] {}; 
	\node[ve] (z6) [right of=z5] {}; 
	\node[ve] (z7) [right of=z6] {};  
	\path[every node/.style={font=\sffamily\small}]
	(z1) edge[out=75,in=105] (z7)
	(z2) edge[out=75,in=105] (z4)
	(z3) edge[out=75,in=105] (z6)
	(z3) edge[out=75,in=105] (z7)
	(z2) edge[out=75,in=105] (z5)
	(z1) edge[out=75,in=105] (z5);    
	\end{tikzpicture}.\]
	But it is now clear that this could not have been a broken circuit: the missing arc that is needed to close the path to a cycle is completely contained within the arc of maximal length in the path.

	Suppose instead that we have an arc diagram for which the intersection of the corresponding hyperplanes is empty. This can only happen if the vertex ``1'' and the vertex ``$(n-1)$'' are in the same connected component of the graph given by the arc diagram. Choose a path in the arc diagram from the vertex $1$ to the vertex $(n-1)$. Since the arcs in the diagram correspond to elements of $C_n$, and $\{1,n-1\} \not\in C_n$, the path must have length at least $2$. If this path does not contain a broken circuit, then the same argument as before shows that the edge-lengths must be unimodal, which is clearly impossible. \end{proof}

\begin{defn}
	We define a \emph{gravity arc diagram} to be an arc diagram satisfying the conditions of the above proposition; that is, the set of gravity arc diagrams equals the set of basis elements in the ``no broken circuit''-basis for $H^\bullet(M_{0,n})$. 
\end{defn}

\begin{rem}\label{arcdiagramremark}
	Proposition \ref{arcdiagram} may be reformulated as saying that the initial ideal of $\Jarc$ is generated by the monomials $e_{ij}e_{jk}$ for $i<j<k$, and $e_{ik}e_{jk}e_{jl}$ for $i<j<k<l$. 
\end{rem}

\subsection{Chords and cross-ratios}By a \emph{chord} in a convex polygon we mean a straight line connecting two non-consecutive vertices. Let us label the sides of the standard $n$-gon clockwise from $1$ to $n$. Each chord partitions the set $\{1,\ldots,n\}$ into two intervals (for the cyclic order). We shall label each chord by the two endpoints of the corresponding interval in $\{1,\ldots,n\}$ \emph{not} containing the element $n$. Below is the standard \(n\)-gon for \(n=5\), with the chord \(\{1,3\}\) drawn on it.
	\[
	\begin{tikzpicture}[font=\scriptsize, baseline={([yshift=-.5ex]current bounding box.center)}]
	\node[regular polygon, shape border rotate=36, regular polygon sides=5, minimum size=2cm, draw] at (5*2,0) (A) {};
	\foreach \i [count=\j] in {4,...,1,5} {
		\coordinate (si) at (A.side \j);
		\path (A.center) -- (si) node[pos=1.25] {\i}; 
	}
	\path[thick] (A.corner 2) edge node[auto] {} (A.corner 5);
	\end{tikzpicture}
	\]
Under this labeling, the set of chords becomes identified with the set of pairs $\{i,j\}$ of elements in $\{1,\ldots,n-1\}$ except $\{1,n-1\}$, which is exactly the set $C_n$ that we introduced in the previous section. Recall that we defined the \emph{length} of an element of $C_n$, and a total order $\prec$ on $C_n$; we will use this to freely talk about the relation $\prec$ also on the set of chords.

Each chord is incident to four sides of the polygon, which correspond to four of the markings on $M_{0,n}$. Thus each chord defines a forgetful map $u_{ij} \colon M_{0,n} \to M_{0,4} \cong \A^1\setminus \{0,1\}$, or more classically, a cross-ratio. More precisely, we let $u_{ij}$ for $i<j$ be the cross-ratio $[\,i \,\,i-1\,\vert\, j \,\, j+1\,]$, where subtraction and addition is taken modulo $n$.

We caution the reader that our numbering of the chords does \emph{not} agree with the one used in \cite{brownmzv}: our $u_{ij}$ is his $u_{i-1\,j}$. 
	
	There is an evident notion of when two chords \emph{cross}. Say that two subsets \(A,B\subset C_{n}\) are \emph{completely crossing} if whenever a chord \(c\) crosses all chords in \(A\), then \(c\in B\), and vice versa, if a chord \(c\) crosses all chords in \(B\), then \(c\in A\). These notions can be used to give a presentation of the coordinate ring of $\M_{0,n}$ \cite[Section 2.2]{brownmzv}: one has
	\[
	M_{0,n} = \mathrm{Spec}\,\Q[u_c^{\pm 1}\mid c\in C_{n}]/\langle R \rangle,
	\]
	where \(R\) is the set of relations that
	\(
	\prod_{a\in A} u_a + \prod_{b\in B} u_b = 1
	\)
	for all pairs of completely crossing subsets \(A,B\subset C_n\). The codimension 1 strata are given by equations \(u_c=0\). For example, the stratum \(u_{\{1,3\}}=0\) (corresponding to the chord shown in the picture above) corresponds to the partition into intervals \(\{5,4\}\sqcup\{1,2,3\}\). Strata of higher codimension correspond to tilings and are explicitly given by setting \(u_c=0\) for all chords \(c\) in the tiling.

This leads moreover to a presentation of the cohomology ring. The cohomology ring $H^\bullet(M_{0,n},\Z)$ is isomorphic to the subalgebra of the de Rham complex generated by the differential forms
$$ \alpha_c = \frac{1}{2i\pi}d\log u_c, \qquad c \in C_n,$$
which gives us a \emph{second} natural surjective map $\wedge^\bullet C_n \to H^\bullet(M_{0,n},\Z)$. Let $\Jchord$ denote the kernel of this surjection. A set of generators for $\Jchord$, and hence a different presentation of the cohomology ring $H^\bullet(M_{0,n},\Z)$, was determined in \cite[Proposition 6.2]{brownmzv}: the ideal $\Jchord$ is generated by all expressions $$\biggl(\,\sum_{a\in A} \alpha_a\biggr)\! \biggl(\,\sum_{b\in B} \alpha_b\biggr),$$ where $A, B \subset C_n$ are completely crossing subsets.
\subsection{The basis of chord diagrams}

Let $R$ be a ring, and let $S$ be the exterior algebra over $R$ generated by elements $x_1,\ldots,x_n$. (Everything we say now is true also in the more classical case of a polynomial algebra.) Suppose we fix the ordering $x_1 \prec x_2 \prec \ldots \prec x_n$, and extend this to a term ordering on the monomials in the generators (e.g. by the lexicographic order). There is a natural action of $G = GL_n(R)$ on $S$: an element $g = (g_{ij}) \in G$ acts on the generators via the rule
$$ g \cdot x_j = \sum_{i=1}^n g_{ij} x_i.$$
Let $U \subset G$ be the unipotent subgroup of upper triangular matrices with ones along the diagonal. The action of $G$ on the initial ideals and Gr\"obner bases is nontrivial and interesting, and leads to the theory of generic initial ideals. However, all the action takes place in the coset space $U\backslash G$, by the following easy lemma:
\begin{lem}\label{grobnerlemma}
	For any $f \in S$ and $u \in U$, we have $\mathrm{in}(f) = \mathrm{in}(u \cdot f)$. In particular, $I$ and $u \cdot I$ have the same initial ideal for any ideal $I$, and if $f_1,\ldots,f_k$ is a Gr\"obner basis for $I$, then $u \cdot f_1,\ldots, u \cdot f_k$ is a Gr\"obner basis for $u \cdot I$.  
\end{lem}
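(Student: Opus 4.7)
The plan is to reduce the lemma to a direct computation on monomials. The action of $u \in U$ on a generator is $u \cdot x_j = x_j + \sum_{i < j} u_{ij} x_i$, so $u \cdot x_j$ equals $x_j$ plus a linear combination of variables strictly smaller than $x_j$ with respect to $\prec$. I would then fix a monomial $m = x_{i_1} \wedge \cdots \wedge x_{i_k}$ with $i_1 < \cdots < i_k$ and expand
$$u \cdot m = \bigwedge_{\ell=1}^{k} \Bigl( x_{i_\ell} + \sum_{j < i_\ell} u_{j,i_\ell} x_j\Bigr).$$
Distributing the wedges gives $m$ itself plus a sum of other wedge monomials, each obtained by replacing one or more of the $x_{i_\ell}$ by some $x_{j_\ell}$ with $j_\ell < i_\ell$. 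After re-sorting (with appropriate signs, and possibly collapsing to zero when two indices coincide), each surviving new monomial has a weakly smaller multiset of indices than $m$ and is not equal to $m$.

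Next, I would invoke the hypothesis that $\prec$ refines the natural partial order on monomials in which replacing any variable by one with a smaller index produces a strictly smaller monomial; this is automatic for the lexicographic extension and for any reasonable term order compatible with the ordering of the generators. It follows that every monomial appearing in $u \cdot m$ other than $m$ is strictly $\prec m$, so $\mathrm{in}(u \cdot m) = m$. For a general $f = \sum c_m m$ with leading monomial $m_0$, one then computes that the coefficient of $m_0$ in $u \cdot f = \sum c_m (u \cdot m)$ equals $c_{m_0}$: it receives the contribution $c_{m_0}$ from $u \cdot m_0$, no contribution from $u \cdot m$ with $m \prec m_0$ (since such $u \cdot m$ only involves monomials $\preceq m \prec m_0$), and no contribution from $m \succ m_0$ (since $c_m = 0$ by maximality of $m_0$). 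All other monomials in $u \cdot f$ are $\prec m_0$ for the same reasons, so $\mathrm{in}(u \cdot f) = m_0 = \mathrm{in}(f)$.

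For the consequences, the equality $\mathrm{in}(u \cdot I) = \mathrm{in}(I)$ is immediate: each inclusion follows from the main claim applied to $u$ and to $u^{-1} \in U$. If $f_1, \ldots, f_k$ is a Gr\"obner basis for $I$, then $u \cdot f_1, \ldots, u \cdot f_k$ generate $u \cdot I$ and their initial terms $\mathrm{in}(u \cdot f_i) = \mathrm{in}(f_i)$ generate $\mathrm{in}(I) = \mathrm{in}(u \cdot I)$, so they form a Gr\"obner basis.

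The only subtlety, and hence the only step that deserves careful verification, is the compatibility between the term order and the ``replace a variable by a smaller one'' operation; everything else is formal manipulation of the unipotent action and of the definition of the initial term. Once this monotonicity property of $\prec$ is made explicit, the proof reduces to the bookkeeping described above.
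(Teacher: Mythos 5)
Your proposal is correct and takes essentially the same route as the paper, whose entire proof is the one-line observation that $u \cdot x^\alpha = x^\alpha + (\text{terms strictly $\prec$-smaller than } x^\alpha)$; your expansion of the wedge product, the sign/collapse bookkeeping in the exterior algebra, and the passage from monomials to general $f$ are exactly the details the paper leaves implicit. The monotonicity property you flag (replacing a variable by a $\prec$-smaller one strictly decreases the monomial) is automatic for any multiplicative term order extending $x_1 \prec \cdots \prec x_n$, so it is a fair point of care but not a gap in either argument.
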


\begin{proof}
	If $x^\alpha$ is a monomial in $S$, then \[ u \cdot x^\alpha = x^\alpha + (\text{terms strictly $\prec$-smaller than $x^\alpha$}). \]
	This implies the result.
\end{proof}

After this brief interlude, let us return to the problem at hand. In terms of the coordinates $z_1,\ldots,z_{n}$ on $M_{0,n}$ defined in Subsection \ref{sectionarc}, the differential forms $\alpha_{ij}$ may be written in the form
\[
\alpha_{ij} = \frac{1}{2i\pi}d\log \frac{(z_{i}-z_j)(z_{i-1}-z_{j+1})}{(z_{i-1}-z_{j})(z_i-z_{j+1})} = 
\omega_{ij}+\omega_{i-1\,j+1}-\omega_{i-1\,j}-\omega_{i\,j+1}.
\]
The reader may check that this is valid also if one of the four points equals $z_n=\infty$, in which case we must set $\omega_{in} = 0 =\omega_{1\,n-1}$. Thus there is a commutative diagram
	\[
	\begin{tikzcd}[column sep=1em]
	\wedge^\bullet C_n \arrow{rr}{u}\arrow{dr}{f} && \wedge^\bullet C_n \arrow{ld}[swap]{g} \\
	& H^\bullet(M_{0,n},\Z) &
	\end{tikzcd}
	\]
	where $f$ takes a generator $e_{ij}$ to the class $\alpha_{ij}$, $g$ takes $e_{ij}$ to $\omega_{ij}$, and
	$$ u(e_{ij}) = e_{ij} + e_{i-1\,j+1} - e_{i-1\, j} - e_{i\,j+1}.$$ Again we tacitly set $e_{1\,n-1} = e_{in} = 0$. But now note that in the expression $e_{ij} + e_{i-1\,j+1} - e_{i-1\, j} - e_{i\,j+1}$, the first term is strictly larger than the remaining three under $\prec$. This implies that we are in the situation of Lemma \ref{grobnerlemma}, and we deduce:
	
	\begin{thm}The initial ideal $\mathrm{in}(\Jchord)$ is generated by the monomials $e_{ij}e_{jk}$ for $i<j<k$, and $e_{ik}e_{jk}e_{jl}$ for $i<j<k<l$. 
	\end{thm}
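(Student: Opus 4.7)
The plan is to reduce the statement immediately to Remark \ref{arcdiagramremark} by exhibiting $u$ as an element of the unipotent subgroup $U$ of Lemma \ref{grobnerlemma}. Concretely, from the commutative diagram $g \circ u = f$ (displayed just before the theorem statement), we obtain $\Jchord = u^{-1}(\Jarc)$. Since $u$ will turn out to be invertible, this is equivalent to the equality of ideals $u(\Jchord) = \Jarc$.

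The key point is then to verify that $u$ is upper triangular unipotent with respect to the ordering $\prec$ on the generators of $\wedge^\bullet C_n$. Looking at the defining formula
\[
u(e_{ij}) = e_{ij} + e_{i-1\,j+1} - e_{i-1\,j} - e_{i\,j+1},
\]
the generator $e_{ij}$ has length $|i-j|$, while the three correction terms have lengths $|i-j|+2$, $|i-j|+1$, and $|i-j|+1$ respectively. Since $\prec$ was chosen as an arbitrary refinement of the partial order by \emph{reverse} length, each correction term that is nonzero is strictly $\prec$-smaller than $e_{ij}$; the boundary terms $e_{1\,n-1}$ and $e_{in}$ are set to zero and so cause no problems. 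Thus $u$ is of the form ``identity plus strictly lower-triangular'' on the linear span of the generators. In particular $u$ is invertible, and as a multiplicative extension to $\wedge^\bullet C_n$ it lies in the image of the unipotent group $U$ in the sense of Lemma \ref{grobnerlemma}.

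Applying that lemma gives $\mathrm{in}(\Jchord) = \mathrm{in}(u(\Jchord)) = \mathrm{in}(\Jarc)$, and Remark \ref{arcdiagramremark} identifies the right hand side as the ideal generated by the monomials $e_{ij}e_{jk}$ for $i<j<k$ and $e_{ik}e_{jk}e_{jl}$ for $i<j<k<l$. This finishes the argument.

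There is essentially no substantive obstacle: once one has set up the commuting triangle and Brieskorn's presentation on both sides, the proof reduces to the elementary observation that $u$ is triangular with respect to $\prec$. The only place where one must be a little careful is to confirm that the choice of $\prec$ refining reverse length makes \emph{all three} correction terms $\prec$-smaller, rather than just one or two of them; this is exactly why the ordering was set up that way in Subsection \ref{sectionarc}.
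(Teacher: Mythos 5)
Your proposal is correct and takes essentially the same route as the paper's own proof: the paper likewise observes that in $u(e_{ij}) = e_{ij} + e_{i-1\,j+1} - e_{i-1\,j} - e_{i\,j+1}$ the first term is strictly $\prec$-largest because $\prec$ refines the order by reverse length, so that $u$ is an automorphism of the type covered by Lemma \ref{grobnerlemma} carrying $\Jchord = \ker f$ onto $\Jarc = \ker g$, and then invokes Remark \ref{arcdiagramremark}. The only blemish is cosmetic: you call $u$ both ``upper triangular unipotent'' and ``identity plus strictly lower-triangular,'' but the substantive condition you actually verify --- that $u(e_{ij})$ equals $e_{ij}$ plus strictly $\prec$-smaller terms --- is exactly the hypothesis of Lemma \ref{grobnerlemma}, so the argument stands.
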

\begin{proof}
	The map $u$ is an automorphism of the algebra $\wedge^\bullet C_n$, taking the ideal $\Jchord$ (the kernel of $f$) to the ideal $\Jarc$ (the kernel of $g$). But since $u$ is an automorphism of the form considered in Lemma \ref{grobnerlemma}, this implies that $\Jchord$ and $\Jarc$ have the same initial ideal. The initial ideal of $\Jarc$ was determined in Proposition \ref{arcdiagram}, see Remark \ref{arcdiagramremark}. 
\end{proof}

We define a \emph{chord diagram} on the \(n\)-gon to be a basis element in the exterior algebra $\wedge^\bullet C_n$, drawn as a collection of chords on a standard \(n\)-gon. Thus the sets of arc diagrams and chord diagrams are in canonical bijection with each other, and with the set of subsets of $C_n$. The difference between arc diagrams and chord diagrams is that they are drawn diagrammatically in different ways, and that when we think of an arc diagram as a cohomology class on $M_{0,n}$ we are applying the map $e_{ij} \mapsto \omega_{ij}$, whereas arc diagrams are interpreted as cohomology classes via $e_{ij} \mapsto \alpha_{ij}$.

\begin{defn}
	A \emph{gravity chord diagram} is a chord diagram which is not divisible by the monomials $e_{ij}e_{jk}$ for $i<j<k$, and $e_{ik}e_{jk}e_{jl}$ for $i<j<k<l$. 
\end{defn}

\begin{cor}
	The set of gravity chord diagrams 
	defines a basis for $H^\bullet(M_{0,n})$. 
\end{cor}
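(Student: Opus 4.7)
The plan is to deduce the corollary directly from the preceding theorem together with the standard fact that the set of standard monomials with respect to an initial ideal gives a basis of the quotient.

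First, I would recall that $H^\bullet(M_{0,n},\Z)$ is, by definition of $\Jchord$, isomorphic to the quotient algebra $\wedge^\bullet C_n/\Jchord$. For any term order on monomials in an exterior algebra, a monomial $m$ is said to be standard (with respect to an ideal $I$) if $m\notin \mathrm{in}(I)$; by the usual Gr\"obner basis argument already invoked in Remark \ref{osrelations} and in the discussion following Theorem \ref{nbc}, the set of standard monomials descends to a $\Z$-basis of $\wedge^\bullet C_n/I$.

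Applying this to $I=\Jchord$, a monomial $e_S$ is standard if and only if the chord diagram $S$ is not divisible, as a monomial in $\wedge^\bullet C_n$, by any generator of $\mathrm{in}(\Jchord)$. By the theorem just proven, the initial ideal $\mathrm{in}(\Jchord)$ is generated by the monomials $e_{ij}e_{jk}$ for $i<j<k$ and $e_{ik}e_{jk}e_{jl}$ for $i<j<k<l$; these are exactly the monomials that a gravity chord diagram is required not to be divisible by. Hence gravity chord diagrams are precisely the standard monomials for $\Jchord$, and so constitute a $\Z$-basis of $H^\bullet(M_{0,n},\Z)$.

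There is essentially no obstacle in this argument: everything has been set up so that the corollary is a one-line consequence of the preceding theorem. The genuine content lies upstream, in computing $\mathrm{in}(\Jchord)$ via the unipotent change of variables $u$ and Lemma \ref{grobnerlemma}, which reduced the problem to the already-established computation of $\mathrm{in}(\Jarc)$ in Proposition \ref{arcdiagram} (see Remark \ref{arcdiagramremark}). The only thing worth emphasizing is that the cardinality of the set of gravity chord diagrams automatically matches the Betti numbers of $M_{0,n}$, since it coincides with the cardinality of the nbc-basis of Proposition \ref{arcdiagram} under the tautological bijection between arc diagrams and chord diagrams with the same underlying element of $C_n$.
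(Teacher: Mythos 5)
Your proof is correct and takes exactly the route the paper intends: the corollary is stated without proof precisely because, once the theorem identifies $\mathrm{in}(\Jchord)$ as generated by $e_{ij}e_{jk}$ ($i<j<k$) and $e_{ik}e_{jk}e_{jl}$ ($i<j<k<l$), the gravity chord diagrams are by definition the standard monomials, and the fact that standard monomials form a basis of the quotient was recorded in the paper's Gr\"obner basis discussion. Your closing remark on the bijection with the nbc-basis of arc diagrams is a correct (if optional) sanity check, consistent with the paper's observation that $\Jarc$ and $\Jchord$ share the same initial ideal via the unipotent automorphism $u$.
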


\subsection{Cooperad structure of chord diagrams}
Let us give a graphical interpretation of gravity chord diagrams. Recall that the side of the $n$-gon labeled by $n$ played a special role in the way we numbered the chords on the $n$-gon: each chord partitions the set $\{1,\ldots,n\}$ into two intervals for the cyclic ordering, and the chord is labeled by the two endpoints of the interval \emph{not} containing $n$. This corresponds to the fact that gravity chord diagrams will form a basis for $\coGrav$ as a nonsymmetric cooperad (as opposed to an antiplanar one), so the condition of being a gravity chord diagram should depend on the choice of a total ordering of the sides, or equivalently, the choice of a distinguished side of the polygon. Geometrically this corresponds to fixing a point at infinity. 

The condition for being a gravity chord diagram is then the following: for every pair of crossing chords in the chord diagram, consider the corresponding inscribed quadrilateral. The side of this quadrilateral that is opposite from the distinguished side of the polygon is not allowed to be a side of the polygon, nor is it allowed to be a chord in the diagram. The two forms of inadmissible chord diagrams are illustrated in Figures \ref{grav1} and \ref{grav2}; the ``inscribed quadrilaterals'' mentioned in the definition are depicted by dotted lines, and the distinguished side of the polygon is the one on the top.

\begin{figure}[!h]
	\centering
	\begin{minipage}{.45\linewidth}
		\centering
		\begin{tikzpicture}[font=\scriptsize, baseline={([yshift=-.5ex]current bounding box.center)}]
		\node[regular polygon, shape border rotate=45, regular polygon sides=8, minimum size=2cm, draw] at (5*2,0) (A) {};
		\foreach \i in {1,...,8} {
			\coordinate (ci) at (A.corner \i);
			\coordinate (si) at (A.side \i);
			\node at (ci) {};
			\path (A.center) -- (si) node[pos=1.25] {}; 
		}
		\path[densely dotted]
		(A.corner 2) edge node[auto] {} (A.corner 4)
		(A.corner 2) edge node[auto] {} (A.corner 7)
		(A.corner 5) edge node[auto] {} (A.corner 7);
		\path[thick] 
		(A.corner 2) edge node[auto] {} (A.corner 5)
		(A.corner 4) edge node[auto] {} (A.corner 7);
		\path[very thick]
		(A.corner 1) edge node[auto] {} (A.corner 8);
		\end{tikzpicture}
		
		\captionof{figure}{A chord diagram of the form $e_{ij}e_{jk}$ for $i<j<k$.}
		\label{grav1}
	\end{minipage}\hspace{0.05\linewidth}\begin{minipage}{.45\linewidth}
		\centering
		\begin{tikzpicture}[font=\scriptsize, baseline={([yshift=-.5ex]current bounding box.center)}]
		\node[regular polygon, shape border rotate=45, regular polygon sides=8, minimum size=2cm, draw] at (5*2,0) (A) {};
		\foreach \i in {1,...,8} {
			\coordinate (ci) at (A.corner \i);
			\coordinate (si) at (A.side \i);
			\node at (ci) {};
			\path (A.center) -- (si) node[pos=1.25] {}; 
		}
		\path[densely dotted]
		(A.corner 7) edge node[auto] {} (A.corner 5)
		(A.corner 1) edge node[auto] {} (A.corner 3)
		(A.corner 1) edge node[auto] {} (A.corner 7);
		\path[thick] 
		(A.corner 1) edge node[auto] {} (A.corner 5)
		(A.corner 3) edge node[auto] {} (A.corner 7)
		(A.corner 3) edge node[auto] {} (A.corner 5);
		\path[very thick]
		(A.corner 1) edge node[auto] {} (A.corner 8);
		\end{tikzpicture} 
		
		\captionof{figure}{A chord diagram of the form $e_{ik}e_{jk}e_{jl}$ for $i<j<k<l$.} 
		\label{grav2}
	\end{minipage}
\end{figure}

A diagram with $i$ chords in an $n$-gon and a diagram with $j$ chords in an $m$-gon can be grafted together to produce a  diagram with $(i+j+1)$ chords in an $(n+m-2)$-gon, by identifying two sides of the polygons with each other and including this side as a chord in the $(n+m-2)$-gon. As an example, grafting the two chord diagrams
\[
\begin{tikzpicture}[font=\scriptsize, baseline={([yshift=-.5ex]current bounding box.center)}]
\node[regular polygon, shape border rotate=60, regular polygon sides=6, minimum size=2cm, draw] at (5*2,0) (A) {};
\foreach \i [count=\j] in {5,...,1,6} {
	\coordinate (si) at (A.side \j);
	\path (A.center) -- (si) node[pos=1.25] {\i}; 
}
\path[thick] (A.corner 1) edge node[auto] {} (A.corner 4)
(A.corner 2) edge node[auto] {} (A.corner 6);
\end{tikzpicture}
\qquad \text{and} \qquad 
\begin{tikzpicture}[font=\scriptsize, baseline={([yshift=-.5ex]current bounding box.center)}]
\node[regular polygon, shape border rotate=90, regular polygon sides=4, minimum size=2cm, draw] at (5*2,0) (A) {};
\foreach \i [count=\j] in {3,...,1,4} {
	\coordinate (si) at (A.side \j);
	\path (A.center) -- (si) node[pos=1.25] {\i}; 
}\path[thick] (A.corner 2) edge node[auto] {} (A.corner 4);
\end{tikzpicture} 
\]
along the sides labeled ``$6$'' and ``$3$'', respectively, produces the chord diagram
\[
\begin{tikzpicture}[font=\scriptsize, baseline={([yshift=-.5ex]current bounding box.center)}]
\node[regular polygon, shape border rotate=45, regular polygon sides=8, minimum size=2cm, draw] at (5*2,0) (A) {};
\foreach \i [count=\j] in {7,...,1,8} {
	\coordinate (si) at (A.side \j);
	\path (A.center) -- (si) node[pos=1.25] {\i}; 
}
\path[thick] (A.corner 1) edge node[auto] {} (A.corner 6)
(A.corner 1) edge node[auto] {} (A.corner 4)
(A.corner 2) edge node[auto] {} (A.corner 6)
(A.corner 6) edge node[auto] {} (A.corner 8);
\end{tikzpicture} .
\]
In this case, we grafted together two \emph{gravity} chord diagram, and the result was again a gravity chord diagram. This was no coincidence:
\begin{prop}\label{grafting}
	Grafting the distinguished side of a gravity chord diagram onto a non-distinguished side of a gravity chord diagram produces a new gravity chord diagram.
\end{prop}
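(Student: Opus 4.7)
My approach is to perform a case analysis on the pairs of crossing chords in the grafted diagram. The fundamental structural observation is that every chord in the grafted diagram is either (i) a chord of $A$, (ii) a chord of $B$, or (iii) the new gluing chord, and the gluing chord partitions the new polygon into an ``$A$-half'' containing only the chords of $A$ and a ``$B$-half'' containing only the chords of $B$. As a direct consequence, the gluing chord is crossed by nothing, and no chord of $A$ crosses any chord of $B$. Hence the only crossings in the grafted diagram are crossings that were already present in $A$ or already present in $B$. The proof then reduces to checking, for each such crossing, that the two forbidden configurations (the opposite arc being a single polygon side, or there being a third chord between its endpoints) do not arise relative to the new distinguished side $d_B$.

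For a crossing of two chords of $A$, I plan to show that the opposite arc with respect to $d_B$ in the grafted polygon coincides with the opposite arc with respect to $d_A$ in the original $A$-diagram. Both chords lie entirely in the $A$-half, and $d_B$ lies in the $B$-half, which is on the same side of both chords as $d_A$ was originally. Thus the region containing $d_B$ in the grafted polygon is the one obtained from the region containing $d_A$ in $A$ by extending through the gluing chord, and the opposite region and opposite arc are literally unchanged. Gravity of $A$ immediately rules out the first forbidden configuration, and also rules out any chord of $A$ closing the opposite arc. Since the endpoints of this arc are $A$-corners but not $d_A$-corners (the arc being opposite $d_A$), no chord of $B$ and no gluing chord can have both of its endpoints there either.

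For a crossing of two chords of $B$, I plan a finer case split depending on whether the opposite arc of this crossing in the original $B$-diagram contains the grafting side $s_B$. If not, the opposite arc is unchanged in the grafted polygon and the argument is symmetric to the $A$-case. If so, the opposite arc in the new polygon is obtained by replacing the single side $s_B$ with the entire $A$-boundary minus $d_A$; this enlarged arc automatically contains many sides, so the first forbidden configuration is ruled out for free. For the second forbidden configuration I will enumerate the candidate closing chords: a chord of $B$ is forbidden by gravity of $B$; a chord of $A$ requires both endpoints to be $s_B$-corners; and the gluing chord has its endpoints precisely at the two $s_B$-corners. But gravity of $B$ forbids the opposite arc in $B$ from being the single side $s_B$, so its endpoints cannot both be $s_B$-corners, ruling out both remaining possibilities.

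The main (and essentially the only) obstacle is bookkeeping: to verify carefully that the notion of ``opposite from the distinguished side'' transforms correctly under grafting, in particular the observation that $d_B$ sits on the same side of any chord of $A$ as $d_A$ did, and to enumerate exhaustively the possible closing chords in the sub-case where the opposite arc swallows $s_B$. Once these structural observations are in place, the verification uses only the gravity properties of $A$ and $B$ separately.
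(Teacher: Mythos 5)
Your proof is correct and follows essentially the same route as the paper's: both reduce to the observation that the gluing chord crosses nothing, so every forbidden configuration would live in one half (contradicting gravity of $A$ or $B$) or have the gluing chord as its ``middle'' chord, which forces the crossing pair to have been a configuration of the form in Figure~\ref{grav1} (opposite arc equal to the single side $s_B$) before grafting. The paper's proof is just terser, sketching ``a few cases,'' while you spell out the bookkeeping that ``opposite from the distinguished side'' is preserved under grafting and that cross-half closing chords would need both endpoints at the gluing corners --- details the paper leaves implicit.
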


\begin{proof}
	The proof amounts to checking a few cases. Suppose, for instance, that there is a configuration of chords of the form displayed in Figure \ref{grav2} after grafting. If this configuration is completely contained in one of the two polygons which have been grafted together, then this would contradict the assumption that both were gravity chord diagrams. The only other possibility is that the ``middle'' chord in this configuration is the new chord that was introduced upon grafting, since the new chord introduced after grafting cannot cross any other chord. But in this case, the other two chords were a configuration of the form displayed in Figure \ref{grav1} before grafting.
\end{proof}

\begin{defn}\label{g-def}Let $\mathsf g(n)$ denote the set of gravity chord diagrams in an $(n+1)$-gon. The grafting procedure of Proposition \ref{grafting} makes the collection $\mathsf g = \{\mathsf g(n)\}_{n \geq 2}$ into a nonsymmetric operad in the category of sets, which we call the \emph{operad of gravity chord diagrams.} 
\end{defn}

\begin{defn} We say that a chord in a chord diagram is \emph{residual} if the diagram contains no other chord which crosses it.
\end{defn}

The dual procedure to grafting two chord diagrams is \emph{cutting along a residual chord}: given a chord diagram with $d$ chords on an $n$-gon and a residual chord $c$, cutting along $c$ gives two chord diagrams: one on a $(k+1)$-gon and one on an $(n-k+1)$-gon (for some $k$), with a total of $(d-1)$ chords. 

\begin{prop}\label{cutting}
	Let $c$ be a residual chord in a gravity chord diagram on an $n$-gon. Cutting the diagram along $c$ produces two gravity chord diagrams, where the distinguished side on the polygon not containing the side labeled ``$n$'' is taken to be the side given by the chord $c$.
\end{prop}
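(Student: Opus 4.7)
The plan is to cut along $c$, obtaining two sub-polygons $P_1$ and $P_2$ where $P_1$ inherits the original distinguished side of the polygon and $P_2$ receives $c$ itself as its distinguished side. Since $c$ is residual, no chord of $D$ crosses $c$, so every chord other than $c$ lies entirely in one of $P_1, P_2$, producing well-defined chord diagrams $D_1, D_2$. I want to verify that each $D_i$ satisfies the gravity condition relative to its distinguished side, and I proceed by contrapositive: any forbidden configuration in $D_i$ will be shown to lift to a forbidden configuration in $D$.

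The key geometric observation underpinning the argument is that for any two crossing chords $\alpha, \beta \in D_i$, the side of their inscribed quadrilateral $Q$ opposite to the distinguished side of $P_i$ coincides with the side of $Q$ opposite to the distinguished side of $P$. For $P_1$ this is tautological since the distinguished side has not changed. For $P_2$ it follows from the elementary fact that, in the cyclic order on $P$'s boundary, both the original distinguished side and the new distinguished side $c$ sit in the same arc between consecutive vertices of $Q$: namely, the four vertices of $Q$ lie in the $P_2$-arc cut off by $c$, and the arc of $P$ going the other way around (through $P_1$) both contains the original distinguished side and is exactly the arc that, in $P_2$, becomes the edge $c$. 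A small case analysis (depending on whether the endpoints of $c$ happen to be vertices of $Q$) is required to make this rigorous, but in every case the side of $Q$ labeling the ``opposite'' arc agrees.

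With the opposite side $\sigma$ independent of the viewpoint, the main argument is short. If $\sigma$ is a chord of $D_i$, then $\sigma$ is a chord of $D$, so $\{\alpha, \beta, \sigma\}$ is a Figure~\ref{grav2} configuration in $D$, contradicting gravity of $D$. If $\sigma$ is a polygon side of $P_i$, then either $\sigma$ is an original polygon side of $P$, producing a Figure~\ref{grav1} violation in $D$, or $\sigma$ equals the new side $c$ created by the cut. The latter case cannot arise when $i = 2$, since there $c$ is the distinguished side of $P_2$ while $\sigma$ is by definition opposite to it; for $i = 1$ it gives a Figure~\ref{grav2} violation in $D$ with $\alpha, \beta, c$ as the three chords. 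The only real subtlety I expect is the geometric observation on preservation of $\sigma$ under the cut; everything else reduces to direct casework.
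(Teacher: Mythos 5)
Your proof is correct and takes essentially the same route as the paper, whose proof of this proposition simply declares it ``entirely analogous'' to that of Proposition \ref{grafting}: one lifts any forbidden configuration in a cut piece back to a forbidden configuration in the original diagram, the only delicate case being when the offending side is the new side $c$, which converts a Figure \ref{grav1}-type violation in the piece into a Figure \ref{grav2}-type violation in the whole (mirroring the grafting proof, where the new chord turned a grav2 configuration after grafting into a grav1 configuration before). Your explicit verification that the side of the inscribed quadrilateral opposite the distinguished side is unchanged by the cut makes rigorous a point the paper leaves implicit.
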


\begin{proof}
	The proof is entirely analogous to the proof of Proposition \ref{grafting}.
\end{proof}

 \begin{defn} We say that a gravity chord diagram is \emph{prime} if it contains no residual chords. Denote the set of prime chord diagrams on an $(n+1)$-gon by \(\mathsf{p}(n) \subset \mathsf g(n)\). \end{defn}

\begin{thm}\label{chorddiagramsfree}
	The operad of gravity chord diagrams $\mathsf g$ is the free nonsymmetric operad generated by the collection $\mathsf p$ of prime chord diagrams.
\end{thm}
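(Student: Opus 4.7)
The plan is to show that every gravity chord diagram admits a canonical decomposition as a tree of prime gravity chord diagrams grafted together, and that this recipe is bijective with the free operad on $\mathsf{p}$.

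\textbf{Decomposition.} Given a gravity chord diagram $D$ on an $(n+1)$-gon, let $R(D)$ denote the set of residual chords. Since residual chords cross nothing, $R(D)$ is a set of pairwise non-crossing chords, and hence partitions the polygon into a collection of sub-polygons $P_1,\ldots,P_k$. Any non-residual chord $c$ of $D$ cannot cross a residual chord, so $c$ lies in the interior of exactly one $P_i$. We assemble from $D$ a planar rooted tree whose vertices are the sub-polygons, whose internal edges are the residual chords, and whose root is the sub-polygon containing the distinguished side (the side labeled $n+1$). The chord across each internal edge is the distinguished side of the child sub-polygon. By iterated application of Proposition~\ref{cutting}, the chord diagram $D_i$ obtained by restricting $D$ to $P_i$ is a gravity chord diagram; and it is \emph{prime}, because any non-residual chord $c$ of $D$ lying in $P_i$ is crossed in $D$ by some chord $c'$, which must then lie in the same sub-polygon $P_i$ (it cannot be separated from $c$ by a non-crossing chord), hence $c$ remains non-residual in $D_i$.

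\textbf{Surjectivity and recovery via grafting.} Iteratively grafting the prime pieces $D_i$ along the internal edges of the tree yields back $D$ by Proposition~\ref{grafting}. Thus every gravity chord diagram is in the image of the natural map $\mathrm{Free}(\mathsf{p}) \to \mathsf{g}$ from the free nonsymmetric operad on $\mathsf{p}$ to $\mathsf{g}$ induced by the inclusion $\mathsf{p}\subset\mathsf{g}$.

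\textbf{Injectivity.} It remains to show the tree of prime pieces is uniquely determined by $D$. Concretely, I claim that for any grafted diagram, a chord is residual if and only if it comes from an internal edge of the tree. On one hand, a chord that is introduced as the identified side during grafting cannot be crossed by any other chord, since no chord can cross the boundary between two sub-polygons obtained by grafting. On the other hand, if a chord $c$ belongs to a prime vertex labelling, then already inside that vertex $c$ is crossed by some other chord $c'$, and $c'$ persists as a chord of the total diagram, so $c$ remains non-residual. Consequently, the set of residual chords of a grafted diagram recovers the set of internal edges of the tree, and removing them gives back the prime vertex labels. Hence different trees of primes give different gravity chord diagrams, which establishes freeness.

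\textbf{Main obstacle.} The only subtle point is verifying the residual/non-residual dichotomy in the grafted diagram; once this is established, everything else is a direct combinatorial bookkeeping. In particular, one must check that grafting cannot accidentally turn a non-residual chord inside a prime piece into a residual one, which follows because grafting only \emph{adds} the single new boundary chord (which is non-crossing) and never removes any chord, so every crossing present inside a prime vertex is preserved in the total diagram.
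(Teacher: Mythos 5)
Your proposal is correct and follows essentially the same route as the paper's proof: cutting along all residual chords (Proposition~\ref{cutting}) to produce prime pieces, regrafting via Proposition~\ref{grafting}, and establishing uniqueness by observing that in any grafted diagram the residual chords are exactly the grafting chords. You have simply spelled out in detail the steps the paper's two-sentence proof leaves implicit, including the verification that the cut pieces are prime and that crossings inside a prime vertex persist after grafting.
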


\begin{proof}
	Every residual chord in a gravity chord diagram gives a way of writing the diagram as an operadic composition of two smaller gravity chord diagrams, and vice versa, by Propositions \ref{grafting} and \ref{cutting}. It follows that prime chord diagrams are exactly the indecomposable elements in the operad $\mathsf g$. Moreover, every element of $\mathsf g$ can be written in a unique way as a composition of prime chord diagrams, by cutting along all possible residual chords.\end{proof}

\begin{rem}
	The preceding proof is perhaps best illustrated by an example. Figure \ref{freefigure1} shows a gravity chord diagram on the $10$-gon, and Figure \ref{freefigure2} shows the unique way of writing this gravity chord diagram as an operadic composition of prime chord diagrams. The distinguished (``output'') side of each polygon is illustrated by a thick edge.
\end{rem}

\begin{figure}[!h]
	\centering
	\begin{minipage}{.45\linewidth}
		\centering
		
		\begin{tikzpicture}[font=\scriptsize, baseline={([yshift=-.5ex]current bounding box.center)}]
		\node[regular polygon, shape border rotate=0, regular polygon sides=10, minimum size=2cm, draw] at (9.5,.75) (A) {};
		\path[thick] 
		(A.corner 1) edge node[auto] {} (A.corner 3)
		(A.corner 2) edge node[auto] {} (A.corner 5)
		(A.corner 5) edge node[auto] {} (A.corner 9)
		(A.corner 10) edge node[auto] {} (A.corner 6)
		(A.corner 10) edge node[auto] {} (A.corner 7)
		(A.corner 10) edge node[auto] {} (A.corner 5)
		(A.corner 1) edge node[auto] {} (A.corner 5);		
		\path[very thick]
		
		(A.corner 1) edge node[auto] {} (A.corner 2);
		\end{tikzpicture}
		\captionof{figure}{An element of $\mathsf g(9)$.}
		
		\label{freefigure1}
	\end{minipage}
	\hspace{.05\linewidth}
	\begin{minipage}{.45\linewidth}
		\centering

		\begin{tikzpicture}[font=\scriptsize, baseline={([yshift=-.5ex]current bounding box.center)}]
		\node[regular polygon, shape border rotate=0, regular polygon sides=10, minimum size=2cm] at (9.7,.1) (A) {};
		\path
		
		(A.corner 5) edge node[auto] {} (A.corner 4)
		(A.corner 4) edge node[auto] {} (A.corner 3)
		(A.corner 1) edge node[auto] {} (A.corner 5)
		(A.corner 3) edge node[auto] {} (A.corner 2)
		(A.corner 2) edge node[auto] {} (A.corner 1);
		\path[thick] 
		(A.corner 1) edge node[auto] {} (A.corner 3)
		(A.corner 2) edge node[auto] {} (A.corner 5);
		\path[very thick]
		(A.corner 1) edge node[auto] {} (A.corner 2);
		\node[regular polygon, shape border rotate=0, regular polygon sides=10, minimum size=2cm] at (10,0) (B) {};
		\path
		(B.corner 5) edge node[auto] {} (B.corner 1)
		(B.corner 10) edge node[auto] {} (B.corner 5)
		(B.corner 1) edge node[auto] {} (B.corner 10);
		\path[very thick]
		(B.corner 1) edge node[auto] {} (B.corner 5);
		\node[regular polygon, shape border rotate=0, regular polygon sides=10, minimum size=2cm] at (10.3,-.1) (C) {};
		\path
		(C.corner 10) edge node[auto] {} (C.corner 9)
		(C.corner 9) edge node[auto] {} (C.corner 8)
		(C.corner 8) edge node[auto] {} (C.corner 7)
		(C.corner 7) edge node[auto] {} (C.corner 6)
		(C.corner 6) edge node[auto] {} (C.corner 5)
		(C.corner 5) edge node[auto] {} (C.corner 10);
		\path[thick] 
		(C.corner 5) edge node[auto] {} (C.corner 9)
		(C.corner 10) edge node[auto] {} (C.corner 6)
		(C.corner 10) edge node[auto] {} (C.corner 7);
		\path[very thick]
		(C.corner 10) edge node[auto] {} (C.corner 5);

		\end{tikzpicture}
		
		\captionof{figure}{The decomposition of the element on the right into prime chord diagrams.}
		\label{freefigure2}
	\end{minipage}
\end{figure}

Note that the set of gravity chord diagrams is partitioned \(\mathsf{g}(n)=\coprod_{d\geq 0}\mathsf{g}(n)^d\) by the number \(d\) of chords, and the set of prime chord diagrams similarly. Define \(\mathsf{coG}(n) = \bigoplus_{d\geq 0}\Sigma^{-d-1}\Q\mathsf{g}(n)^d\). This becomes a nonsymmetric graded cooperad, by dualizing the operad structure of Definition \ref{g-def}. 
It is cofree on \(\mathsf{P}(n) = \bigoplus_{d\geq 0}\Sigma^{-d-1}\Q\mathsf{p}(n)^d\).

\begin{rem}
Notice that the collection \(\mathsf{g}\) is not a cooperad of sets, even though it linearly spans the cooperad \(\mathsf{coG}\) and the partial cocompositions take basis vectors to basis vectors (if the partial cocomposition is nonzero). In pictures this is visible in that there is no way, at the level of sets, to cocompose a gravity chord diagram along a chord not present in the diagram. In \(\mathsf{coG}\) the problem evaporates, such a cocomposition is simply set to be zero. Abstractly, the cooperad \(\mathsf{coG}\) is conilpotent and no such cooperad can be spanned by a cooperad of sets.
\end{rem}

\begin{thm}\label{thm:nscofree}
The map \(\mathsf{coG}\to \coGrav\) is an isomorphism of nonsymmetric graded cooperads. In particular, $\coGrav$ is cofree as a nonsymmetric cooperad.
\end{thm}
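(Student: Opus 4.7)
The plan is to show that the linear bijection $\phi\colon \mathsf{coG}(n) \to \coGrav(n)$ which sends a gravity chord diagram $\alpha$ to the wedge product $\bigwedge_{c\in\alpha}\alpha_c$ is a morphism of nonsymmetric cooperads. That $\phi$ is a linear isomorphism in each arity is the content of the corollary preceding the theorem; once it is shown to intertwine the cocompositions, cofreedom of $\coGrav$ as a nonsymmetric cooperad follows at once from Theorem \ref{chorddiagramsfree} by dualization.

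The partial cocompositions on both sides are indexed by chords $c$ in the $(n+1)$-gon, equivalently by the boundary divisors $D_c\subset \MM_{0,n+1}$ compatible with the dihedral structure. On $\mathsf{coG}$, cocomposition along $c$ is dual to grafting: it vanishes unless $c$ is a chord of $\alpha$ which is residual in $\alpha$, in which case it returns the tensor product of the two gravity chord diagrams obtained by cutting (Proposition \ref{cutting}). On $\coGrav$, cocomposition along $c$ is the Poincar\'e residue $\mathrm{Res}_{D_c}$. My plan is to match these by a case analysis on $\alpha=\bigwedge_i \alpha_{c_i}$. If $c\notin\{c_i\}$, then since each $\alpha_{c_i}=(2\pi i)^{-1}d\log u_{c_i}$ has its pole divisor at $D_{c_i}\neq D_c$, the product has no pole along $D_c$ and both cocompositions vanish. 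If $c = c_i$ for some $i$ and no other $c_j$ crosses $c$, then $\alpha_c$ contributes the simple pole; the remaining factors each lie on one side of $c$ and restrict to $D_c\cong \MM_{0,k+1}\times \MM_{0,n-k+1}$, and by the K\"unneth formula this restriction is exactly the tensor product of the two cut sub-diagrams. The remaining case, in which $c\in\alpha$ but some $c_j\in\alpha$ crosses $c$, is where I expect the main difficulty to lie, since $c$ is then not residual in $\alpha$ and the cocomposition on $\mathsf{coG}$ vanishes, so the geometric residue must vanish for a less transparent reason.

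To handle this last case, I would appeal to Brown's relations $\prod_{a\in A}u_a + \prod_{b\in B}u_b = 1$ for pairs of completely crossing subsets. Applied with $A = \{c\}$ and $B=\{c':c'\text{ crosses }c\}$, restriction to $D_c$ followed by logarithmic differentiation yields the linear relation $\sum_{c'\text{ crosses }c}\alpha_{c'}=0$ in $H^\bullet(D_c)$. This relation should allow one to rewrite the restriction of $\alpha$ to $D_c$ in terms of diagrams with fewer chord-crossings at $c$, and an induction on the number of chords in $\alpha$ that cross $c$ then reduces the computation to the vanishing cases already handled. The defining exclusions for gravity chord diagrams, in the form of Figures \ref{grav1} and \ref{grav2}, are set up precisely to forbid the patterns that would obstruct this vanishing. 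Once the residue computation is complete in all three cases, $\phi$ is a cooperad isomorphism, and Theorem \ref{chorddiagramsfree} gives cofreedom.
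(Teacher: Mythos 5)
Your overall framework is sound and close in spirit to the paper's: the paper also reduces everything to showing that the Poincar\'e residue along $u_c=0$ is computed by the diagrammatic formula $\mathrm{Res}_c=\Delta_c\circ\partial/\partial\alpha_c$ (``differentiate out $\alpha_c$, then cut''), and your first two cases are exactly the geometric content of that formula. But your third case --- $c\in\alpha$ with some chord of $\alpha$ crossing $c$ --- contains a genuine gap. The relation you extract from Brown's presentation with $A=\{c\}$, $B=\{c':c'\text{ crosses }c\}$ is $\sum_{c'\text{ crosses }c}\alpha_{c'}=0$ on $D_c$, and substituting it into the restricted monomial trades the crossing chord $c_j$ for a sum over \emph{other} chords crossing $c$ (any term repeating a chord already present dies in the exterior algebra, and all surviving terms still contain exactly as many crossing chords as before). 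So the quantity you propose to induct on --- the number of chords crossing $c$ --- never decreases, and the induction does not get off the ground. Moreover, the gravity conditions of Figures \ref{grav1} and \ref{grav2} play no role here: the vanishing you need holds for \emph{every} monomial containing $c$ together with a chord crossing $c$, gravity diagram or not.

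The fact that repairs this is strictly stronger than your sum relation, and is the key geometric input hidden in the paper's cutting operation $\Delta_c$: for each individual chord $c'$ crossing $c$, the dihedral coordinate $u_{c'}$ restricts to the \emph{constant} $1$ on the divisor $u_c=0$ (when the curve degenerates along $c$, the four sides defining the cross-ratio $u_{c'}$ are separated two-and-two by the node, forcing the cross-ratio to $1$; this is in \cite{brownmzv}). Hence each $\alpha_{c'}=\frac{1}{2i\pi}d\log u_{c'}$ restricts to zero on $D_c$ individually, and the residue of any monomial containing $c$ and a crossing chord vanishes in one line, with no induction. This individual vanishing is precisely the clause $\Delta_c\alpha_b=0$ for $b$ crossing $c$ in the paper's definition of cutting. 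Note also where the completely crossing relations actually enter the paper's argument: not to prove this case-three vanishing, but to check that $\Delta_c\circ\partial/\partial\alpha_c$ annihilates the relation ideal $\bigl(\sum_{a\in A}\alpha_a\bigr)\bigl(\sum_{b\in B}\alpha_b\bigr)$ and is therefore well defined on cohomology --- a step your basis-by-basis residue computation can bypass, since the Poincar\'e residue of a closed logarithmic form is defined at the level of forms. With the individual vanishing $u_{c'}|_{D_c}=1$ in hand, your three-case plan closes up and is a correct, essentially equivalent reformulation of the paper's proof.
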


\begin{proof}Since we have already shown that gravity chord diagrams give a basis for the cohomology of $\M_{0,n}$, we only need to verify that this is in fact a morphism of cooperads. Modulo signs involving the convention for how to order chords in a gravity diagram, the statement amounts to showing that the Poincar\'e residue is given diagrammatically by cutting along a residual chord. To argue this we shall use Brown's presentation
 \[
  H^{\bullet}(M_{0,n}) = \wedge^\bullet C_{n} / \biggl\langle \biggl(\sum_{a\in A} \alpha_a\biggr) \biggl(\sum_{b\in B} \alpha_b\biggr)\biggr\rangle,
 \]
for \(A, B\subset C_{n}\) completely crossing subsets. We propose that the Poincar\'e residue to a stratum \(u_c=0\) is given by \(\mathrm{Res}_c = \Delta_c\circ\partial/\partial\alpha_c\), where \(\Delta_c\) is the operation of ``cutting along \(c\)''. Let \(A,B\subset C_{n}\) be a pair of completely crossing subsets and define
 \[
 R_{A,B}= \biggl(\sum_{a\in A} \alpha_a\biggr) \biggl(\sum_{b\in B} \alpha_b\biggr).
 \]
Assume first that \(c\notin A\cup B\). Then
 \(
 \frac{\partial}{\partial \alpha_c} R_{A,B}=0.
 \)
Assume conversely, without loss of generality, that \(c\in A\). Then
 \(
 \frac{\partial}{\partial \alpha_c} R_{A,B}=\sum_{b\in B} \alpha_b.
 \)
However, we note that all \(b\in B\) must then cross \(c\); and since \(\Delta_c\alpha_b=0\) if \(b\) crosses \(c\), we can conclude that, in all cases,
 \(
 \Delta_c\frac{\partial}{\partial \alpha_c} R_{A,B} = 0.
 \)
This proves that the expression is well-defined as a map on cohomology. That it equals the Poincar\'e residue is then clear since \(u_c=0\) is the equation defining the stratum and \(\alpha_c=\frac{du_c}{2i\pi u_c}\). 
\end{proof}

\begin{rem}Taking the Poincar\'e residue of Arnol'd forms \(\omega_{ij}\) is a lot more subtle --- the simple formula \(\mathrm{Res}_c = \Delta_c\circ\partial/\partial\alpha_c\), using the chord-diagrammatic description, is the reason why the basis given by the \(\{\alpha_{ij}\}\) is better suited for cooperadic computations.
\end{rem} 
\begin{rem}
	In fact, our proof shows that the integral cohomology $\{H^{\bullet-1}(M_{0,n+1},\Z)\}_{n \geq 2}$ forms a cooperad, and that this integral gravity cooperad is cofree.
\end{rem}

Recall that \(\Prim_n = H^{\bullet-2}(\M_{0,n}^{\delta})\).

\begin{cor}
The projection from the basis $\mathsf g(n)$ onto the subset $\mathsf p(n)$ cogenerates an isomorphism
 \[
 \coGrav \to \freeantipl(\Sigma\Prim)
 \]
of antiplanar cooperads, where $\freeantipl$ denotes the cofree antiplanar cooperad functor.
\end{cor}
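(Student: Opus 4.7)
My plan is to deduce the corollary by combining Theorem \ref{thm:nscofree} with the equivalences of Theorem \ref{equivalence}. The idea: Theorem \ref{thm:nscofree} already verifies condition (1) of Theorem \ref{equivalence}, so by the chain of equivalences established there, condition (2) also holds; that is, $\coGrav$ is cofree as an antiplanar cooperad with cogenerators identifiable with $\Sigma\Prim$. This gives the existence of an \emph{abstract} isomorphism $\coGrav \cong \freeantipl(\Sigma\Prim)$, and what remains is to verify that the explicit cogenerating map in the statement realizes such an isomorphism.

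First I would identify the linear projection $\pi\colon \coGrav \to \Sigma\Prim$ (induced by $\mathsf g(n) \to \mathsf p(n)$ on basis elements) with the canonical projection of $\coGrav$ onto its intrinsic cogenerator space. The justification is that, dualizing Theorem \ref{chorddiagramsfree}, the nonsymmetric cooperad $\mathsf{coG}$ is cofree on $\mathsf P$, so $\pi$ is precisely the projection onto the ``single-vertex-tree'' summand of this cofree presentation. Since the cogenerator space of a conilpotent cooperad is intrinsic (the cokernel of the reduced cocomposition), the same vector space serves as cogenerators whether $\coGrav$ is viewed as nonsymmetric or antiplanar. The identification $\mathsf P \cong \Sigma\Prim$ is then made exactly as in the proof of (1) $\Rightarrow$ (5) of Theorem \ref{equivalence}: the nonsymmetric analogue of Theorem \ref{planar} combined with formality of $\chaincoGrav$ yields a zig-zag $\Sigma^{-1}\mathsf P \simeq \cobarns \coGrav \simeq \chainPrim$, and passing to cohomology gives $\mathsf P \cong \Sigma\Prim$.

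Next, by the universal property of $\freeantipl$, I would extend $\pi$ uniquely to a morphism $\widehat\pi\colon \coGrav \to \freeantipl(\Sigma\Prim)$ of antiplanar cooperads. Since $\coGrav$ is abstractly cofree as an antiplanar cooperad on $\Sigma\Prim$ and $\widehat\pi$ restricts to the identity on the cogenerator space, $\widehat\pi$ must be an isomorphism.

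The hard part will be the bookkeeping of compatibility with the cyclic action. The subset $\mathsf p(n) \subset \mathsf g(n)$ depends on the choice of distinguished edge of the polygon, and consequently the subspace $\mathsf P(n) \subset \mathsf{coG}(n) \cong \coGrav_n$ need not be stable under the cyclic action. Nevertheless, the identification with the intrinsic cogenerator space is canonical and hence cyclic-equivariant, so this ambiguity disappears upon passing to the quotient, and $\widehat\pi$ is a well-defined morphism of antiplanar cooperads.
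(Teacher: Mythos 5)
Your global strategy coincides with the paper's: both deduce from Theorem \ref{thm:nscofree} and Theorem \ref{equivalence} that $\coGrav$ is abstractly cofree as an antiplanar cooperad on $\Sigma\Prim$, and both reduce the corollary to checking that the explicit projection $\pi$ determined by $\mathsf g(n)\to\mathsf p(n)$ is compatible with the cyclic group actions. But your justification of that compatibility contains a genuine gap: you assert that ``the cogenerator space of a conilpotent cooperad is intrinsic (the cokernel of the reduced cocomposition)'' and that $\pi$ is therefore a canonical, hence equivariant, projection. You have the variance backwards. For a conilpotent cooperad the intrinsic datum is the \emph{primitive subspace} --- the kernel of the reduced cocompositions, a subobject, not a quotient --- and there is no canonical projection onto the cogenerators: already for the cofree conilpotent coalgebra $T^c(V)$, any nonzero linear map $V^{\otimes 2}\to V$ corestricts to a coalgebra automorphism of $T^c(V)$ which fixes the primitive subspace $V$ but does not commute with the projection onto $V$. (It is generators of a free \emph{operad} that are canonically a quotient, namely the indecomposables; cogenerators of a cofree cooperad are canonically a subspace.) So ``canonical hence cyclic-equivariant'' does not get off the ground, and the equivariance of $\pi$ --- which is essentially the entire content of the proof beyond quoting Theorem \ref{equivalence} --- is left unproved.

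What \emph{is} true, and repairs half of your argument while contradicting your final paragraph, is that the subspace $\mathsf P(n)\subset\coGrav_n$ is cyclically stable after all: it coincides with the primitives (a prime diagram has vanishing reduced cocompositions, and since cocomposition takes basis diagrams to basis diagrams or to zero, no combination of non-prime diagrams is primitive), and it is also the image of the equivariant injection $H^{\bullet}(\M_{0,n}^{\delta})\hookrightarrow H^{\bullet}(\M_{0,n})$. The genuinely missing ingredient is stability of the \emph{kernel} of $\pi$, the span of diagrams containing a residual chord: a left inverse of an inclusion of a subrepresentation is equivariant only if its kernel is also a subrepresentation, and intrinsicness of the primitives says nothing about any chosen complement. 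This is exactly the point the paper's proof supplies: the cyclic rotation permutes the dihedral coordinates $u_c$ and hence the forms $\alpha_c$, so it sends a monomial $\alpha_G$ to $\pm\alpha_{G'}$ with $G'$ the rotated configuration, and the cyclic action preserves residual chords, so the span of diagrams with a residual chord is preserved; combined with the subrepresentation statement this makes $\pi$ equivariant. With this verification inserted in place of the appeal to a canonical projection, your remaining steps --- extending $\pi$ by the couniversal property of $\freeantipl$, and the coradical-filtration argument that a cogenerating map restricting to an isomorphism on the cogenerators is an isomorphism --- are sound and agree with the paper.
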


\begin{proof}
It follows from \ref{equivalence} that if \(\coGrav\) is cofree as a nonsymmetric cooperad, then it must be cogenerated by \(\Sigma\Prim\) and, moreover, \(\Sigma\Prim\) must be isomorphic to its image in \(\coGrav\). Thus we must only argue that the projection \(H^{\bullet}(\M_{0,n})\to H^{\bullet}(\M_{0,n}^{\delta})\) is a map of planar collections, i.e., that it respects cyclic group actions. But this is clear since the cyclic action preserves residual chords, \(H^{\bullet}(\M_{0,n}^{\delta})\subset H^{\bullet}(\M_{0,n})\) is a subrepresentation of the cyclic group, and the projection is a left inverse of the inclusion.
\end{proof}

\subsection{The Lie operad}

According to a theorem of Salvatore and Tauraso \cite{salvatoretauraso}, the operad $\mathsf{Lie}$ is the linear hull of a free nonsymmetric operad in the category of sets. According to Theorems \ref{chorddiagramsfree} and \ref{thm:nscofree}, the same holds true for the gravity operad $\mathsf{Grav}$. In the highest cohomological degree, $\Grav$ is isomorphic to the suspension $\Lambda \mathsf{Lie}$ of the Lie operad; in particular, we have recovered an independent proof of Salvatore and Tauraso's result. 

In fact, there is an easy isomorphism between the operad in sets which they construct, and the suboperad of $\mathsf g$ given by gravity chord diagrams of largest degree. In \cite[227]{salvatoretauraso} it is explained how the elements of their operad can be drawn diagrammatically in terms of diagrams of arcs in a half-plane. Their diagrams are equivalent to our arc diagrams --- the only difference is that in our diagrams, the largest arc $e_{1n}$ is not allowed to occur, whereas for them it must always be present. The isomorphism between their operad and ours is then given by taking one of their diagrams, intepreting it as an arc diagram in our sense, and using the bijection between arc and chord diagrams.

According to Theorem \ref{thm:nscofree} and Theorem \ref{equivalence}, $\coGrav$ is in fact cofree as an \emph{antiplanar} cooperad. In the highest cohomological degree, $\coGrav$ is isomorphic to the suspension $\Lambda \mathsf{coLie}$ of the Lie co-operad. Desuspending and dualizing, we conclude:

\begin{cor}
	The operad $\mathsf{Lie}$ is free as a \emph{planar} operad. 
\end{cor}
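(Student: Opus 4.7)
The plan is to pass to the top cohomological degree of the isomorphism $\coGrav \cong \freeantipl(\Sigma\Prim)$ established just above, and then dualize and desuspend. Both the source and target of this isomorphism in arity $n$ are concentrated in cohomological degrees between $1$ and $n-2$: the upper bound comes from $\dim M_{0,n} = \dim M_{0,n}^\delta = n-3$ together with the degree shifts built into $\coGrav$ and $\Sigma\Prim$. For a planar unrooted tree $T$ with $n$ legs, the Euler relation gives $\sum_{v \in \mathrm{Vert}(T)}(n(v)-2) = n-2$, so the top-degree contribution in the cofree formula
\[
\freeantipl(\Sigma\Prim)_n \;=\; \bigoplus_{T}\, \bigotimes_{v \in \mathrm{Vert}(T)}(\Sigma\Prim)_{n(v)}
\]
comes precisely from summands in which each tensor factor sits in its own top degree. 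Hence passing to the top cohomological degree commutes with the cofree antiplanar cooperad functor on collections satisfying the above degree bound.

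The top-degree piece of $\coGrav_n$ is $H^{n-3}(M_{0,n})$ placed in degree $n-2$, which is classically identified with the arity $n$ component of the operadic suspension $\Lambda \mathsf{coLie}$ of the Lie cooperad. Applying the observation of the previous paragraph to the isomorphism $\coGrav \cong \freeantipl(\Sigma\Prim)$ therefore produces an isomorphism
\[
\Lambda\mathsf{coLie} \;\cong\; \freeantipl(N)
\]
of antiplanar cooperads, where $N$ denotes the top-degree piece of $\Sigma\Prim$. In particular $\Lambda\mathsf{coLie}$ is cofree as an antiplanar cooperad.

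Finally I would dualize and desuspend. Linear duality interchanges antiplanar cooperads with planar operads and cofree objects with free ones, so we deduce that $\Lambda \mathsf{Lie}$ is free as a planar operad. The operadic desuspension $\Lambda^{-1}$ is an autoequivalence of the category of planar operads which preserves freedom, and applying it yields that $\mathsf{Lie}$ itself is free as a planar operad. The only nontrivial points to verify along the way are the degree bookkeeping and the compatibility of operadic suspension with the planar tree formalism; no substantially new input is required beyond the already established cofreedom of $\coGrav$ as an antiplanar cooperad.
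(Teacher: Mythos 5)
Your proposal is correct and follows essentially the same route as the paper: restrict the cofreedom isomorphism $\coGrav \cong \freeantipl(\Sigma\Prim)$ to the top cohomological degree, identify that piece with $\Lambda\mathsf{coLie}$, and then dualize and desuspend --- your Euler-relation degree count merely makes explicit the step the paper leaves implicit. One cosmetic slip worth noting: $\Lambda^{-1}$ is not an autoequivalence of the category of planar operads but an equivalence between antiplanar and planar operads (via $\freeantipl = \Lambda^{-1}\freepl\Lambda$), which is in fact exactly what the single desuspension needs in order to convert freeness of the dual as an antiplanar operad into freeness of $\mathsf{Lie}$ as a planar operad.
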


This is a mild improvement on what is proven in \cite{salvatoretauraso} --- it says that not only is the $\mathsf{Lie}$ operad free, but the space of generators in arity $n$ can be given an action of the cyclic group $\Z/(n+1)\Z$, compatible in a suitable sense with the actions of the cyclic groups $\Z/(n+1)\Z$ on the spaces $\mathsf{Lie}(n)$. 

Given this, it is natural to ask if $\mathsf{Lie}$ is in fact the linear hull of a free planar operad in the category of sets. This is, however, false: one may verify (e.g.\ by using the computations of \cite[Subsection 4.4.1]{browncarrschneps}) that a generator of $\Z/6\Z$ acts on the $4$-dimensional space of generators in arity $5$ with trace $-1$, so the action cannot be given by permuting a basis. In particular, $\mathsf{Grav}$ is not the linear hull of a free planar operad in the category of sets, either.

\section{Purity}\label{sec:purity}

\subsection{The constructions of Kapranov and Keel}There are many ways one can construct the moduli space $\MM_{0,n}$ as an iterated blow-up. One is due to \cite{kapranovchow}: start with $\P^{n-3}$, and choose $n-1$ points in general linear position (such a choice is unique up to a change of coordinates). Consider the collection of subvarieties of $\P^{n-3}$ given by all projective subspaces spanned by subsets of these points. Choose a minimal element $Z$ of this collection. Replace $\P^{n-3}$ by the blow-up $\Bl_Z \P^{n-3}$ and replace the remaining elements of the collection of subvarieties by their strict transforms. Repeat this procedure until every member of the collection has been blown up. Then the result is isomorphic to $\overline M_{0,n}$.
 
A similar construction was used by \cite{keel}: start instead with $(\P^1)^{n-3}$, and consider the collection of subvarieties defined by the condition that some subset of the $n-3$ points are equal to each other, or that some subset of the points are equal to $0$, $1$ or $\infty$. An identical procedure of iteratively blowing up a minimal element of the collection and replacing the remaining ones by their strict transforms again produces $\MM_{0,n}$. 

Both constructions just described are special cases of \emph{wonderful compactifications} of an arrangement of subvarieties \cite{wonderfulcompactification}. We have taken the liberty of changing the order of blow-ups compared to the ones used by Keel and Kapranov: one of the main results of \cite{wonderfulcompactification} is that as long as certain combinatorial conditions are satisfied --- which in particular always hold when blowing up a minimal element --- then the end result of this procedure is insensitive to the order in which the subvarieties are blown up. 

\subsection{Weighted stable pointed rational curves}

The notion of \emph{weighted} stable pointed curve was introduced in \cite{hassettweighted}. One thing that Hassett realized is that both Kapranov's and Keel's results can be seen as special cases of a more general construction, which also allows modular interpretations of all the intermediate steps in the sequence of blow-ups. Before explaining this, let us recall the relevant definitions.

\begin{defn}
	A \emph{weight vector} is an $n$-tuple $\a = (a_1,\ldots,a_n)$ of numbers with $0 < a_i \leq 1$ for all $i$ and $\sum_{i=1}^{n} a_i > 2$. 
\end{defn}

\begin{defn}
	Fix a weight vector $\a$. Let $C$ be a nodal curve of arithmetic genus zero, equipped with $n$ marked points $x_1,\ldots,x_n$ contained in the smooth locus. We say that $a_i$ is the weight of $x_i$. We say that $(C,x_1,\ldots,x_n)$ is \emph{$\a$-stable} if:
	\begin{enumerate}
		\item For every irreducible component $C_0$ of $C$, the number of nodes of $C_0$ plus the sum of the weights of all markings on $C_0$ is strictly greater than $2$.
		\item For every $S \subset \{1,\ldots,n\}$ such that $x_i=x_j$ when $i,j \in S$, we have $\sum_{i\in S} a_i \leq 1$. 
	\end{enumerate}
\end{defn}

Hassett has proved that for every weight vector $\a$, there exists a fine moduli space $\MM_{0,\a}$ parametrizing $n$-pointed $\a$-stable curves of genus zero. It is a smooth projective scheme over $\Z$. When $\a = (1,\ldots,1)$ we recover the usual Deligne--Mumford compactification $\MM_{0,n}$. 

\subsection{Stratification by topological type}\label{stratification1}

Recall that the strata in the space $\MM_{0,n}$ are indexed by stable dual graphs $\Gamma$ with $n$ external half-edges (legs). The situation for the spaces $\MM_{0,\a}$ is entirely analogous. The complement of the locus of smooth curves is a strict normal crossing divisor, and the intersections of boundary strata define a stratification, which coincides with the natural stratification by topological type. The strata are again indexed by dual graphs, but with a different stability condition: if the external half-edges are assigned weights according to the weight vector $\a$, and the internal half-edges are all given weight $1$, then for any vertex the sum of the weights of the adjacent half-edges is greater than $2$. We can write the closed stratum corresponding to such a graph $\Gamma$ as $\prod_{v \in \mathrm{Vert}(\Gamma)}\MM_{0,\a(v)}$, where $\a(v)$ is the weight vector given by the weights of all half-edges adjacent to $v$. 

For example, boundary divisors correspond to subsets $S \subset \{1,\ldots,n\}$ with $\sum_{i\in S}a_i > 1$ and $\sum_{i \notin S} a_i > 1$, and each such boundary divisor is a product $\MM_{0,\a'} \times \MM_{0,\a''}$. Here $\a'$ is the weight vector obtained by deleting all elements of $S$ and adding a marking of weight $1$, and $\a''$ is the weight vector obtained by deleting elements not in $S$ and replacing them with a marking of weight $1$.

\subsection{Coincidence sets and chamber structure} \label{hassettchamber} For $S \subset \{1,\ldots,n\}$, let $\Delta_S \subset \MM_{0,\a}$ denote the subset defined by the condition that $x_i=x_j$ for $i,j \in S$. We call these loci \emph{coincidence sets}. If $\Delta_S \neq \emptyset$ then $\sum_{i \in S}a_i \leq 1$. Each coincidence set is itself a moduli space of weighted stable pointed curves: let $\a'$ be the weight vector obtained by removing all but one of the elements of $S$, and assigning the remaining element the weight $\sum_{i \in S}a_i$. Then $\Delta_S \cong \MM_{0,\a'}$. 

Let now $\a = (a_1,\ldots,a_n)$ and $\a' = (a_1',\ldots,a_n')$ be weight vectors. We write $\a' \preceq \a$ if $a_i' \leq a_i$ for all $i$. In this case, there is a natural reduction map $\MM_{0,\a} \to \MM_{0,\a'}$, given by contracting any components that may become unstable when the weights are lowered from $\a$ to $\a'$.

We say that $S \subset \{1,\ldots,n\}$ is \emph{large} if $\sum_{i\in S} a_i > 1$; otherwise, the subset is called \emph{small}. The space $\MM_{0,\a}$ only depends on the weight vector $\a$ via the information of which subsets of $\{1,\ldots,n\}$ are {large}. Geometrically, this means the following. The region 
$$ \mathscr W = \{(a_1,\ldots,a_n) \in \R^n : 0 < a_i \leq 1 \text{ for all } i, \sum_{i=1}^{n} a_i > 2\}$$ 
is subdivided into polyhedral chambers by the hyperplanes $1 = \sum_{i\in S} a_i$, for any $S \subset \{1,\ldots,n\}$. If $\a'$ and $\a$ are in the interior of the same chamber, then $\MM_{0,\a} \cong \MM_{0,\a'}$.  

Suppose instead that $\a$ and $\a'$ lie in adjacent chambers, with $\a' \preceq \a$. Then there is a unique subset $S$ which is large with respect to $\a$ but not $\a'$, namely the subset corresponding to the hyperplane separating the two chambers. Then we have
$$ \MM_{0,\a} \cong \Bl_{\Delta_S} \MM_{0,\a'}.$$
In other words, allowing the markings labeled by $S$ to ``bubble off'' onto a new component is equivalent to blowing up the coincidence set $\Delta_S$. For any subset $T$, the coincidence set $\Delta_T$ in $\MM_{0,\a}$ is the strict transform of the coincidence set $\Delta_T$ in $\MM_{0,\a'}$. 

We remark that if $\vert S \vert = 2$, then the coincidence set $\Delta_S$ is a divisor and and the blow-up in $\Delta_S$ is an isomorphism. In this case, crossing the corresponding wall changes the moduli functor (that is, the universal family over $\MM_{0,\a}$ is modified) but not the moduli space itself. Hassett calls the decomposition of $\mathscr W$ by the hyperplanes corresponding to all $S$ the \emph{fine} chamber decomposition, and the one obtained from $S$ with $\vert S \vert \geq 3$ the \emph{coarse} chamber decomposition. 

\subsection{Kapranov and Keel again} Consider first the weight vector $\a = (a,a,\ldots,a,1)$ with $a = \frac{1}{n-1} + \epsilon$, where $\epsilon > 0$ is sufficiently small. Then an $\a$-stable curve cannot have any extra components, so the moduli space $\MM_{0,\a}$ just parametrizes configurations of points on $\P^1$. Specifically, we are considering $(x_1,\ldots,x_n)$ with $x_i \neq x_n$ for all $i<n$, and such that not all $x_i$ with $i<n$ coincide. Up to a projectivity we may assume $x_1 = 0$ and $x_n = \infty$, in which case we are considering $(x_2,\ldots,x_{n-1}) \in \A^{n-2}$, not all equal to zero, up to the diagonal action of $\mathbb G_m$. We have thus found that
$$ \MM_{0,\a} \cong \P^{n-3}.$$ Under this isomorphism, the collection of coincidence sets $\Delta_S \subset \MM_{0,\a}$ becomes identified with the collection of projective subspaces spanned by all subsets of the $n-1$ points with projective coordinates
\begin{align*}
[1:0:0:\ldots:0], [0:1:0:\ldots:0], [0:0:1:\ldots:0],\ldots, [0:0:0:\ldots:1], [1:1:\ldots:1].
\end{align*}
Now suppose that we gradually increase the weights in the vector $\a$ from $(a,a,\ldots,a,1)$ to $(1,1,\ldots,1)$, in such a way that we never intersect two distinct hyperplanes  $1 = \sum_{i\in S} a_i$ simultaneously. Then by the description in the previous subsection, the moduli space $\MM_{0,\a}$ is transformed from $\P^{n-3}$ to $\MM_{0,n}$ by a sequence of blow-ups. At each step we blow up a minimal coincidence set, and each coincidence sets is the strict transform of one of the above projective subspaces in $\P^{n-3}$. Thus we see that we have exactly recovered Kapranov's construction of $\MM_{0,n}$. 

Keel's construction is recovered in exactly the same way, starting instead with the weight vector $\a = (\frac{2}{3},\frac{2}{3},\frac{2}{3},\epsilon,\ldots,\epsilon)$. Then up to a projectivity the first three markings are $0$, $1$ and $\infty$, and the remaining markings can be assigned arbitrarily. Thus $\MM_{0,\a} = (\P^1)^{n-3}$. The collection of coincidence sets is given by all subsets where some markings coincide with each other or with $0$, $1$ or $\infty$. In exactly the same way we see that gradually increasing the weights in $\a$ from $(\frac{2}{3},\frac{2}{3},\frac{2}{3},\epsilon,\ldots,\epsilon)$ to $(1,1,\ldots,1)$ recovers Keel's construction of $\MM_{0,n}$. 

Since each intermediate step in the construction is explicitly given by some space $\MM_{0,\a}$ with smaller weights, \emph{and} each blow-up center is given by some space $\MM_{0,\a}$ with fewer marked points, Hassett's construction is ideally suited for inductive arguments. 

 

 
\subsection{Weighted version of Brown's partial compactification} We now wish to define analogous spaces $\M_{0,\a}^\delta \subset \MM_{0,\a}$ for arbitrary weight vectors $\a$, generalizing $M_{0,n}^\delta \subset \MM_{0,n}$. For this we will need a dihedral structure $\delta$ on $\{1,\ldots,n\}$, which we continue to assume is the standard one. We say that a subset $I \subset \{1,\ldots,n\}$ is an \emph{interval} if it is so for this dihedral structure. For instance, $\{2,3,4,5\}$ is an interval, but so is also $\{n-2,n-1,n,1,2\}$.

One could define $\M_{0,\a}^\delta$ simply as the Zariski open subset parametrizing those curves whose dual graph is compatible with the dihedral structure, but for our purposes this does not turn out to be the right definition. 

The space $\MM_{0,\a}$ only depended on the weight vector $\a$ via the collection of subsets $S \subset \{1,\ldots,n\}$ such that $\sum_{i\in S}a_i > 1$. Similarly, we want the space $\M_{0,\a}^\delta$ to depend only on the collection of \emph{intervals} $I \subset \{1,\ldots,n\}$ such that $\sum_{i\in I}a_i > 1$. As before, we say that such an interval is \emph{large}, and $I$ is said to be \emph{small} otherwise.

\begin{defn}
	We define $\M_{0,\a}^\delta$ to be the Zariski open subset of $\MM_{0,\a}$ parametrizing those weighted stable $n$-pointed curves which are compatible with the given dihedral structure, and such that if $x_i = x_j$ for $i,j \in S \subset \{1,\ldots,n\}$, then $S$ is contained in a small interval. 
\end{defn} 
 
Clearly $\M_{0,\a}^\delta = M_{0,n}^\delta$ if $\a = (1,1,\ldots,1)$.  
 
\subsection{Stratification of $\M_{0,\a}^\delta$ by topological type} 
The space $\M_{0,n}^\delta$ has a stratification by topological type, whose strata correspond bijectively to pairwise disjoint collections of chords in the $n$-gon. Such a collection of chords gives rise to a tiling of the $n$-gon by smaller polygons. The closure of such a stratum is a product of moduli spaces $\M_{0,n_i}^\delta$, one for each polygon in the tiling, where $n_i$ is the number of edges of the corresponding polygon. 

The space $\M_{0,\a}^\delta$ also has a stratification by topological type, whose strata correspond to collections of chords as above satisfying the following additional stability condition: if each chord is given weight $1$, and the $i$th edge of the $n$-gon is given weight $a_i$, then the sum of all weights along the edges of each smaller polygon is greater than $2$. Again the closure of such a stratum is a product of smaller moduli spaces of the form $\M_{0,\a}^\delta$, one for each polygon in the tiling, with weight vector and dihedral structure specified by the weights along the edges of each polygon. An example is illustrated in Figures \ref{strata1} and \ref{strata2}.

As in the case of $\M_{0,n}^\delta$ this stratification is given by a strict normal crossing divisor in $\M_{0,\a}^\delta$; each divisor corresponds to a single chord in the $n$-gon, dividing the weight vector into two large intervals.

\begin{figure}
	\centering
	\begin{minipage}{.45\linewidth}
		\centering
\begin{tikzpicture}[font=\scriptsize, baseline={([yshift=-.5ex]current bounding box.center)}]
\node[regular polygon, shape border rotate=45, regular polygon sides=8, minimum size=2cm, draw] at (5*2,0) (A) {};
\foreach \i in {1,...,8} {
	\coordinate (ci) at (A.corner \i);
	\coordinate (si) at (A.side \i);
	\node at (ci) {};
	\path (A.center) -- (si) node[pos=1.25] {$a_\i$}; 
}
\path 
(A.corner 1) edge node[auto] {} (A.corner 3)
(A.corner 8) edge node[auto] {} (A.corner 4)
(A.corner 6) edge node[auto] {} (A.corner 4);
\end{tikzpicture} 
		\captionof{figure}{A stratum inside $\M_{0,\a}^\delta$, $\a = (a_1,\ldots,a_8)$. The stability condition is equivalent to $a_1+a_2 >1$, $a_4+a_5 > 1$.}
		\label{strata1}
	\end{minipage}
	\hspace{.05\linewidth}
	\begin{minipage}{.45\linewidth}
		\centering

		\begin{tikzpicture}[font=\scriptsize, baseline={([yshift=-.5ex]current bounding box.center)}]
		\node[regular polygon, shape border rotate=45, regular polygon sides=8, minimum size=2cm] at (9,0) (A) {};
		\path
		(A.corner 1) edge node[auto] {$1$} (A.corner 3)
		(A.corner 3) edge node[auto] {$a_2$} (A.corner 2)
		(A.corner 2) edge node[auto] {$a_1$} (A.corner 1);
		\node[regular polygon, shape border rotate=45, regular polygon sides=8, minimum size=2cm] at (10,0) (B) {};
		\path
		(B.corner 4) edge node[auto] {$a_3$} (B.corner 3)
		(B.corner 3) edge node[auto] {$1$} (B.corner 1)
		(B.corner 8) edge node[auto] {$1$} (B.corner 4)
		(B.corner 1) edge node[auto] {$a_8$} (B.corner 8);
		\node[regular polygon, shape border rotate=45, regular polygon sides=8, minimum size=2cm] at (11,0) (C) {};
		\path
		(C.corner 8) edge node[auto] {$a_7$} (C.corner 7)
		(C.corner 7) edge node[auto] {$a_6$} (C.corner 6)
		(C.corner 6) edge node[auto] {$1$} (C.corner 4)
		(C.corner 4) edge node[auto] {$1$} (C.corner 8);
		\node[regular polygon, shape border rotate=45, regular polygon sides=8, minimum size=2cm] at (12,-0.3) (D) {};
		\path
		(D.corner 6) edge node[auto] {$a_5$} (D.corner 5)
		(D.corner 5) edge node[auto] {$a_4$} (D.corner 4)
		(D.corner 4) edge node[auto] {$1$} (D.corner 6);
		
		\end{tikzpicture}

\captionof{figure}{The stratum depicted on the left is the product of the moduli spaces corresponding to these four polygons.}
\label{strata2}
\end{minipage}
\end{figure}

\subsection{Coincidence sets in the dihedral case} For $S \subset \{1,\ldots,n\}$ we continue to denote the coincidence set by $\Delta_S \subset \M_{0,\a}^\delta$. 

For the remainder of this subsection, we fix a subset $S$ such that $\Delta_S \neq \emptyset$. Then there is a minimal small interval containing $S$; let us denote it $I$. 

\begin{lem}\label{lemma1}
	The topological type of a point of $\Delta_S$ is given by a configuration of chords disjoint from $I$.  
\end{lem}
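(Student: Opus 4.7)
My plan is to argue by contradiction: suppose $p\in\Delta_S$ has topological type recorded by some chord configuration $\Gamma$, and that some chord $c\in\Gamma$ is \emph{not} disjoint from $I$ — i.e., $c$ has at least one endpoint strictly in the interior of the arc $I$ (equivalently, $c$ separates the sides of $I$ nontrivially). The crucial starting observation is that since $p\in\Delta_S$, all markings in $S$ coincide at a single smooth point of a single irreducible component; and since $c$ corresponds to a node of the curve, the markings in $S$ must lie entirely on one side of $c$.

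I would then split into two cases based on how many endpoints of $c$ lie strictly inside $I$. If exactly one endpoint is strictly inside $I$, then $c$ splits $I$ into two nonempty sub-intervals $I_A\sqcup I_B$; by the coincidence observation $S$ is contained in one of them, say $I_A$. But $I_A\subsetneq I$ is itself a small interval (being contained in the small $I$) containing $S$, which directly contradicts the minimality of $I$.

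The more substantial case is when both endpoints of $c$ lie strictly inside $I$. Then $c$ cuts off a sub-interval $B\subsetneq I$ on one side, and $B$ is small. In the tiling $\Gamma$ the region on the $B$-side of $c$ is itself tiled by some chords of $\Gamma$, and any tiling of a convex polygon by non-crossing chords has at least one ``tip'' sub-polygon whose boundary involves exactly one chord of $\Gamma$; the remaining sides of such a tip are a consecutive arc $B'$ of $n$-gon edges. Weighted stability of this tip sub-polygon forces $\sum_{i\in B'}a_i+1>2$, i.e.\ $B'$ is a large interval, contradicting the fact that $B'\subset B\subset I$ is small.

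The genuine obstacle is this second case, where one must combine the minimality of $I$ with the weighted stability condition and the elementary combinatorial fact that every tiling of a convex polygon has a tip sub-polygon (a leaf of its dual tree). Once both cases are excluded, no chord of $\Gamma$ has an endpoint strictly inside $I$, which is exactly the claim of the lemma.
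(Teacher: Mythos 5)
Your proof is correct in substance and follows the same two-case strategy as the paper's (very terse) proof: chords cutting into $I$ are excluded by smallness of $I$ together with the stability condition, and chords separating the sides of $I$ are excluded using the coincidence of the $S$-markings together with minimality of $I$. The differences are minor but worth recording. In your first case, the paper argues instead that the two endpoints of the interval $I$ must lie in $S$ (this is what inclusion-minimality gives) and would be separated by the chord, although they must coincide on $\Delta_S$; your variant --- that $S$ would be contained in a proper sub-interval of $I$, which is again small, contradicting minimality --- is equivalent and arguably cleaner. In your second case, the paper simply invokes its earlier observation that every chord appearing in a stratum of $\M_{0,\a}^\delta$ divides the weight vector into two \emph{large} intervals; your tip/leaf-tile argument is a correct self-contained proof of essentially that fact. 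Note only that the tip tile you select might be the one whose unique bounding chord is $c$ itself, in which case $B'=B$ and the same stability count applies; alternatively one can avoid choosing a tip at all by summing the stability inequality over all $k+1$ tiles on the $B$-side of $c$ (each internal chord counted twice, $c$ once), which yields $\sum_{i\in B} a_i + 1 + 2k > 2(k+1)$, i.e.\ $\sum_{i\in B} a_i > 1$, directly contradicting $B \subseteq I$ small.

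One boundary configuration escapes your trichotomy: a chord whose two endpoints are exactly the two endpoints of the arc $I$, cutting off precisely $I$. It has no endpoint strictly interior to $I$ and does not separate the sides of $I$, so under your reading it counts as ``disjoint from $I$''; but the lemma is used later (in Lemma \ref{retract}) precisely to guarantee that setting all markings indexed by $I$ equal cannot change the topological type, and a component carrying exactly the markings of $I$ plus one node would become unstable under that operation since $\sum_{i \in I} a_i \leq 1$. So this chord must be excluded as well --- and it is, by the very same stability count with $B = I$, since $I$ is small. With that one-line addendum your argument is complete.
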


\begin{proof}
	The topological type cannot contain a chord contained in $I$, since $I$ is small. It cannot contain a chord that starts in $I$ and ends outside it, either, since such a chord separates the endpoints of the interval $I$, but the markings corresponding to the endpoints need to coincide along the locus $\Delta_S$.
\end{proof}

Choose an arbitrary element $s \in S$. Let $\a'$ be the weight vector with $n + 1 - \vert S \vert$ elements obtained by removing all elements of $S \setminus \{s\}$, and assigning the weight $\sum_{i \in S} a_i$ to $s$. (We formulate the procedure in this way to emphasize that the dihedral structure on $\a'$ depends on the choice of element $s \in S$.) In the situation of Hassett's spaces, we had that $\Delta_S \subset \MM_{0,\a}$ was isomorphic to $\MM_{0,\a'}$. For the dihedral spaces, this statement needs to be modified; we have instead the following lemma: 

\begin{lem}\label{coincidenceset}
	Let $I'$ be the small interval in the $(n+1-\vert S \vert)$-gon just defined, obtained by deleting the elements of $S \setminus \{s\}$ from $I$. Then $\Delta_S \subset M_{0,\a}^\delta$ is isomorphic to the open subset of $\M_{0,\a'}^\delta$ which is the complement of all boundary divisors corresponding to chords that meet $I'$.  
\end{lem}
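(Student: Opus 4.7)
The plan is to combine Hassett's identification of coincidence sets with the combinatorial classification of topological types in \(\Delta_S\) provided by Lemma \ref{lemma1}. First I would recall from Subsection \ref{hassettchamber} the canonical isomorphism \(\MM_{0,\a'}\cong \Delta_S\subset \MM_{0,\a}\), which sends an \(\a'\)-stable curve to the \(\a\)-stable curve obtained by splitting the marking \(s\) (carrying weight \(\sum_{i\in S}a_i\) in \(\a'\)) into the cluster \(S\) of coincident markings on the same component. My task reduces to identifying the restriction of this isomorphism to \(\Delta_S\cap \M_{0,\a}^\delta\) with the claimed open subset of \(\M_{0,\a'}^\delta\).

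The key combinatorial observation is that collapsing the sides of \(I\) corresponding to \(S\setminus\{s\}\) induces a bijection between chords of the \(n\)-gon disjoint from \(I\) and chords of the \((n+1-\vert S\vert)\)-gon disjoint from \(I'\). Since a chord disjoint from \(I\) has \(I\) (and hence \(S\)) entirely on one of its two sides, the partition-weight on each side is preserved under the collapse. Consequently, this bijection identifies those boundary divisors of \(\MM_{0,\a}\) meeting \(\Delta_S\) (exactly the ones with chord disjoint from \(I\), by Lemma \ref{lemma1}) with those boundary divisors of \(\MM_{0,\a'}\) whose chord is disjoint from \(I'\). It is important here that in the weighted setting the ``boundary divisors'' of \(\MM_{0,\a'}\) comprise both reducible-curve divisors (both sides of the chord having weight greater than \(1\)) and coincidence divisors (one side of the chord having weight at most \(1\)), and that the bijection respects both flavours via weight-preservation.

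I would then translate the two defining conditions of \(\M_{0,\a}^\delta\) across this isomorphism. Dihedral compatibility transfers automatically, since the planar embedding of the dual graph of the lifted curve is obtained from that of the curve in \(\M_{0,\a'}^\delta\) by placing the cluster \(S\) into the boundary arc corresponding to \(I\). For the coincidence condition, any coincidence set of the lifted curve either equals \(S\) itself (which lies inside the small interval \(I\)) or is obtained from a coincidence set \(T'\) of the \(\a'\)-stable curve. The weight identity between \(I\) in \(\a\) and \(I'\) in \(\a'\) allows one to lift a small interval of \(\a'\) containing \(T'\) to a small interval of \(\a\) containing the lifted coincidence set precisely when the chord of the corresponding coincidence divisor is disjoint from \(I'\). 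The divisors to be removed from \(\M_{0,\a'}^\delta\) to obtain the preimage of \(\Delta_S\cap \M_{0,\a}^\delta\) are therefore exactly those whose chord meets \(I'\).

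The main obstacle is the bookkeeping of the chord-divisor correspondence in the second step, together with the verification of the third step for coincidence divisors: one must show that the small-interval condition for the lifted coincidence fails exactly when the chord meets \(I'\), covering the cases where \(s\in T'\) and \(s\notin T'\) separately. Once this combinatorial matching is in place, the lemma follows as a direct consequence of Hassett's isomorphism and Lemma \ref{lemma1}.
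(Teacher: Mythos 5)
Your proposal is correct and follows essentially the same route as the paper: the isomorphism is the restriction of Hassett's identification $\Delta_S \cong \MM_{0,\a'}$, the forward inclusion is exactly Lemma \ref{lemma1} combined with the chord correspondence under collapsing $S\setminus\{s\}$, and the inverse is the map adding the markings of $S\setminus\{s\}$ on top of $x_s$. The paper's own proof consists of precisely these two steps, with the converse dismissed as ``not hard to see''; your write-up is more explicit on the one point where that brevity is dangerous, and this is worth recording. Namely, your insistence that the removed divisors comprise both flavours --- nodal boundary divisors (chords with both sides large) \emph{and} coincidence divisors $\Delta_{\{k,l\}}$ attached to a chord cutting off a small interval with extreme sides $k,l$ --- is genuinely load-bearing for the converse: a smooth $\a'$-curve on which $x_s$ collides with a marking lying just outside $I'$ sits on no nodal boundary divisor at all, yet splitting $x_s$ back into the cluster $S$ can produce a coincidence class contained in no small interval of $\a$ (the class must then sit in an interval containing all of $I$ together with the extra marking, whose weight can exceed $1$); such a point is excluded from the claimed open subset exactly by the coincidence divisor whose chord meets $I'$, so reading ``boundary divisor'' narrowly would make the statement fail. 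Your third paragraph correctly isolates the remaining verification --- that the small-interval condition for lifted coincidence classes fails precisely when the relevant chord meets $I'$, treated in the two cases $s\in T'$ and $s\notin T'$ --- as the crux; this is the content hidden in the paper's ``it is not hard to see'', and your sketch of it is the right one, though in a final write-up that case analysis should be carried out rather than deferred.
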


\begin{proof}
	By Lemma \ref{lemma1}, the image of the natural map $\Delta_S \to \M_{0,\a'}^\delta$ is contained in this open subset. Conversely, it is not hard to see that this map has a well defined inverse given by adding new markings on top of $x_s$ away from said boundary divisors. 
\end{proof}

\begin{lem}\label{retract}
	The inclusion $i \colon \Delta_I \hookrightarrow \Delta_S$ is a retract; that is, there is a map $r$ in the opposite direction with $r \circ i = \id$. 
\end{lem}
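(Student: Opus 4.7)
The plan is to construct $r$ using the moduli interpretation and the universal family. Let $(\pi\colon \mathcal{C}\to \M_{0,\a}^{\delta},\sigma_1,\ldots,\sigma_n)$ denote the universal $\a$-stable dihedral pointed curve. On the pullback of this family to $\Delta_S$ the sections $\{\sigma_i\}_{i\in S}$ all coincide; call this common section $\sigma_s$. I would then define modified sections $\tau_1,\ldots,\tau_n$ on this pullback by
\[ \tau_i = \sigma_s \text{ if } i\in I, \qquad \tau_i=\sigma_i \text{ if } i\notin I, \]
and take $r\colon \Delta_S\to \M_{0,\a}^{\delta}$ to be the classifying map of the family $(\pi|_{\Delta_S},\tau_1,\ldots,\tau_n)$.

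The main steps are then to verify that this modified family is itself an $\a$-stable dihedral family of pointed curves lying in $\M_{0,\a}^{\delta}$, and to check that $r$ both factors through $\Delta_I$ and is a left inverse to $i$. For the first part, the key geometric input is Lemma \ref{lemma1}: on any fiber over $\Delta_S$ the topological type is a configuration of chords disjoint from $I$, so all sides of the polygon belonging to $I$ lie in the same face, and thus every $\sigma_i$ with $i\in I$ is supported on the same irreducible component as $\sigma_s$. Consequently, replacing $\sigma_i$ by $\sigma_s$ for $i\in I\setminus S$ preserves the distribution of markings over components, leaving Hassett's stability condition (1) intact. For condition (2) and the dihedral small-interval requirement, one uses that since $I$ is the \emph{minimal} small interval containing $S$, any pre-existing coincidence set $S'$ meeting $S$ must contain $S$ and hence sit inside a small interval that also contains $I$; its enlargement $S'\cup I$ in the new family is therefore still contained in that small interval, and coincidence sets disjoint from $S$ are unaffected.

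For the second part, by construction $\tau_i=\tau_s$ for every $i\in I$, forcing the classifying map to land in $\Delta_I$; and on $\Delta_I$ one already has $\sigma_i=\sigma_s$ for every $i\in I$, so the modification does nothing and $r\circ i=\id$ by the universal property of the fine moduli space. The most delicate step is the application of Lemma \ref{lemma1} that ensures the section-replacement takes place within a single smooth component of each fiber; without this there would be no reason for the stability of individual components to survive the modification. Once that geometric observation is in place, everything else follows formally from Hassett's theory.
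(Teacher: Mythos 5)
Your construction is correct and is essentially the paper's own proof: the paper defines the very same retraction by setting $x_i = x_s$ for all $i \in I$, invokes Lemma \ref{lemma1} to see that the topological type (hence component-wise stability) is unchanged, and uses minimality of $I$ among small intervals containing $S$ to show that any enlarged coincidence set remains inside a small interval, so your universal-family/classifying-map packaging is just a more formal rendering of the paper's pointwise definition. The only nit is your claim that coincidence sets disjoint from $S$ are ``unaffected'': those meeting $I \setminus S$ actually shrink when their members are moved to $x_s$, but since subsets of small intervals remain in small intervals this costs nothing.
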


\begin{proof}
	The map is the only natural one: it sets all markings $x_i$ for $i \in I$ equal to $x_j$ for $j \in S$. We should verify that this is well defined. By Lemma \ref{lemma1}, this does not affect the topological type of the curve, and in particular will not cause any component to become unstable. Moreover, if this causes some collection of points $\{x_i\}_{i \in T}$ to coincide, then the markings indexed by $(T \setminus I) \cup S$ must have coincided already before applying $r$. Thus $(T \setminus I) \cup S$ is contained in a short interval, and this interval must contain all of $T$. 
\end{proof}

\begin{rem}
	It is not true in general that if $S \subset T$, then $\Delta_T \hookrightarrow \Delta_S$ is a retract. 
\end{rem}

\subsection{Wall-crossing for $\M_{0,\a}^\delta$} \label{wallcrossing} Suppose that $\a' \preceq \a$. Then the reduction map $\MM_{0,\a} \to \MM_{0,\a'}$ maps the open subset $\M_{0,\a}^\delta$ into $\M_{0,\a'}^\delta$, so we get well defined reduction maps also between the dihedral spaces.

Consider the region $\mathscr W$ from Subsection \ref{hassettchamber}, parametrizing all possible weight vectors. It 
can be subdivided into polyhedral chambers by the hyperplanes $1 = \sum_{i\in I}a_i$ where $I \subset \{1,\ldots,n\}$ is an interval, giving rise to a coarser chamber decomposition than the one considered in the previous section. If $\a$ and $\a'$ lie in the interior of the same chamber with respect to this coarser decomposition, then $M_{0,\a}^\delta \cong M_{0,\a'}^\delta$.

Suppose that $\a$ and $\a'$ lie in adjacent chambers and that $\a' \prec \a$. We wish to understand the relationship between the spaces $\M_{0,\a}^\delta$ and $\M_{0,\a'}^\delta$. There will be a unique interval $I$ which is small with respect to $\a'$ and large with respect to $\a$. Suppose that $i$ and $j$ are the endpoints of the interval. What happens is that  the reduction map $\M_{0,\a}^\delta \to \M_{0,\a'}^\delta$
admits a factorization:
$$ \M_{0,\a}^\delta \hookrightarrow \Bl_{\Delta_I} \M_{0,\a'}^\delta \to \M_{0,\a'}^\delta.$$
The second map is the blow-up along the coincidence set $\Delta_I$. The first map is an open immersion, which is the inclusion of the complement of the strict transform of the divisor $\Delta_{\{i,j\}}$. 

These statements follow from the corresponding ones for Hassett's spaces $\MM_{0,\a}$. Indeed, Hassett's spaces are modified by blowing up $\Delta_S$ when crossing the wall $1 = \sum_{i\in S}a_i$. If $S = I$ is an interval, then $ \Bl_{\Delta_I} \M_{0,\a'}^\delta$ is naturally an open subset of $\Bl_{\Delta_I} \MM_{0,\a'} = \MM_{0,\a}$. The only difference between this open subset and $\M_{0,\a}^\delta$ is that $x_i$ and $x_j$ are allowed to coincide in $ \Bl_{\Delta_I} \M_{0,\a'}^\delta$. Thus removing the strict transform of $\Delta_{\{i,j\}}$ produces $\M_{0,\a}^\delta$. (We remark that this is true also in case $\{i,j\} = I$: in this case $\Delta_I$ is a divisor and blowing up $\Delta_I$ is an isomorphism. Then we remove the strict transform of $\Delta_{\{i,j\}}$, which is empty.)

\subsection{An inductive construction of $M_{0,\a}^\delta$}

The results proven thus far in this section can be used to give an explicit procedure for constructing the moduli space $\M_{0,n}^\delta$ from the affine space $\A^{n-3}$ by repeatedly blowing up a subvariety and then removing the strict transform of a divisor containing the blow-up center. More precisely, we have the following theorem:

\begin{thm}\label{construction}
	Let $\a = (a_1,\ldots,a_n)$ be a weight vector with $a_n=1$. As in Section \ref{sectionarc}, identify $\A^{n-3}$ with the space
	$$ X = \{(z_1,\ldots,z_{n-1}) \in \A^{n-1} : z_1=0, z_{n-1}=1\}.$$
	For every large interval $I = \{i,i+1,\ldots,j-1,j\} \subsetneq \{1,\ldots,n-1\}$ with respect to this weight vector, let $Z_I$ be the affine subspace $\{z_i=z_{i+1}=\ldots=z_j\}$ of $\A^{n-3}$, and $Y_I$ the hyperplane $\{z_i=z_j\}$. 
	
	Iteratively carry out the following procedure:
	\begin{enumerate}
		\item Let $Z_J$ be a minimal element of the collection $\{Z_I\}$.
		\item Replace $X$ with $\mathrm{Bl}_{Z_J}(X) \setminus \widetilde Y_J$, the blow-up of $X$ in $Z_J$ minus the strict transform of the divisor $Y_J$. Moreover, replace each $Z_I$ and $Y_I$ with their strict transforms in this blow-up.
		\item Repeat steps \emph{(1)} and \emph{(2)} until all elements $Z_I$ have been blown up.
	\end{enumerate}
	The end result is isomorphic to the space $\M_{0,\a}^\delta$. 
\end{thm}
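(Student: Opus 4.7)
The plan is to realize the iterative construction of the theorem as a sequence of Hassett-style wall-crossings in the chamber decomposition of weight vectors, applying the formula of Subsection \ref{wallcrossing} at each step.

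First I would identify the affine space $X \cong \A^{n-3}$ with $\M_{0,\a_0}^\delta$ for the initial weight vector $\a_0 = (a,\ldots,a,1)$ with $1/(n-1) < a < 1/(n-2)$. For such $\a_0$ every proper subinterval of $\{1,\ldots,n-1\}$ is small, so no chord boundary divisor appears in $\M_{0,\a_0}^\delta$ and every coincidence pattern compatible with dihedral $\a_0$-stability is allowed; gauging $z_1 = 0$, $z_{n-1}=1$, $z_n = \infty$ then realizes exactly the affine space of the theorem. Under this identification, the affine subspace $Z_I$ corresponds to the coincidence set $\Delta_I$ and the hyperplane $Y_I$ corresponds to $\Delta_{\{i,j\}}$, where $\{i,j\}$ are the endpoints of $I$.

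Next I would enumerate the large proper subintervals of $\{1,\ldots,n-1\}$ relative to $\a$ as $I_1,\ldots,I_m$ in strictly decreasing order of cardinality, breaking ties arbitrarily. Since $J \subsetneq J'$ forces $|J| < |J'|$ and $Z_{J'} \subsetneq Z_J$, this ordering furnishes a valid linear extension of the ``minimal $Z_J$ first'' rule of the theorem. I would then construct a sequence of intermediate weight vectors $\a^{(0)} = \a_0, \a^{(1)}, \ldots, \a^{(m)} = \a$ whose large proper subintervals are exactly $\{I_1,\ldots,I_k\}$; the existence of such weight vectors reduces to verifying that the subset $\{I_1,\ldots,I_k\}$, being upward-closed in the poset of large intervals of $\a$, is realized by a nonempty chamber in the interval hyperplane arrangement on weight space, which can be done by a concrete perturbative construction.

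With the sequence $\{\a^{(k)}\}$ in place, Subsection \ref{wallcrossing} identifies the transition $\M_{0,\a^{(k-1)}}^\delta \to \M_{0,\a^{(k)}}^\delta$ with the blow-up of $\Delta_{I_k}$ followed by removal of the strict transform of $\Delta_{\{i_k,j_k\}}$. Functoriality of coincidence sets under Hassett reductions (Subsection \ref{hassettchamber}) ensures that the strict transform of $Z_{I_k}$ through the previous $k-1$ iterations equals $\Delta_{I_k} \subset \M_{0,\a^{(k-1)}}^\delta$, and similarly for $Y_{I_k}$; this exhibits the $k$-th step of the theorem's construction as the $k$-th wall-crossing, and induction on $k$ yields $\M_{0,\a}^\delta$ at $k = m$. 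The main obstacle I expect is the explicit construction of the intermediate weight vectors $\a^{(k)}$: because the intervals $I_j$ may overlap in complicated patterns on the $n$-gon, verifying consistency of the linear inequalities defining the desired chamber requires careful combinatorial bookkeeping, even though the ordering by decreasing cardinality is what makes the problem essentially tractable.
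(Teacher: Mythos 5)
Your proposal is correct in substance and rests on the same mechanism as the paper's proof: identify the base case $\M_{0,(a,\ldots,a,1)}^\delta \cong \A^{n-3}$, apply the wall-crossing description of Subsection \ref{wallcrossing} at each step, and use the fact that strict transforms of coincidence sets are again coincidence sets to match $Z_I, Y_I$ with $\Delta_I, \Delta_{\{i,j\}}$. Where you diverge is in the organization of the induction, and the divergence is what creates your acknowledged obstacle. You fix a global order (decreasing cardinality) in advance and then must \emph{realize} each prefix of that order by a nonempty chamber in weight space --- a lemma you defer to ``careful combinatorial bookkeeping.'' The paper instead inducts \emph{downward} on the number of large intervals: given the target $\a$, it picks an arbitrary adjacent chamber $\a' \prec \a$ (whose existence is immediate --- just move down until one wall is hit), applies wall-crossing once, and invokes the inductive hypothesis for $\a'$. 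The interval $I$ gained when crossing a single wall is automatically inclusion-minimal among the large intervals of $\a$ (if $J \subsetneq I$ were large for $\a$, it would be large for $\a'$ by positivity of weights, forcing $I$ large for $\a'$ as well), so $Z_I$ is maximal and blowing it up last is consistent with the minimal-first rule; no intermediate chambers need to be constructed. This buys the paper a proof with no realizability lemma at all, at the cost of producing only the runs of the procedure that arise from chamber paths, whereas your prescribed order is more explicit.

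On your realizability claim itself: it is true for interval arrangements, essentially because the only linear exchange identities among interval indicator sums, such as $\mathbf 1_{[a,b]}+\mathbf 1_{[c,d]} = \mathbf 1_{[a,d]}+\mathbf 1_{[c,b]}$ for overlapping intervals, are aligned with the inclusion order, so upward-closedness is the only obstruction --- but it genuinely requires an argument and is not a formality. For general (non-interval) subsets the analogous statement is \emph{false}: the identity $\mathbf 1_{\{1,2\}}+\mathbf 1_{\{3,4\}} = \mathbf 1_{\{1,3\}}+\mathbf 1_{\{2,4\}}$ obstructs making the first two sets large and the last two small, even though the family is upward-closed. So if you retain your forward-induction structure, this lemma must actually be proved, and you should also ensure every intermediate weight vector keeps the $n$-th weight equal to $1$, so that all intervals containing $n$ remain automatically large and the collection $\{Z_I\}$ indexed by proper subintervals of $\{1,\ldots,n-1\}$ is genuinely the complete list of blow-up loci at every stage --- an observation the paper makes explicitly at the start of its proof.
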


\begin{proof}
	Let us first make the observation that if $a_n=1$, then every interval containing $n$ is automatically large,  and the interval $\{1,\ldots,n-1\}$ is also large (since the total weight is at least two). Thus the set of large intervals 
	$I \subsetneq \{1,\ldots,n-1\}$ is exactly the set of intervals which are not automatically large because of the fact that $a_n=1$.
	
	We prove the result by induction on the number of large intervals $I \subsetneq \{1,\ldots,n-1\}$. The base case is if there are no such large intervals, which happens e.g.\ for the weight vector 
	$$ \left(a,a,\ldots,a,1\right)$$
	where $a = \frac{1}{n-1} + \epsilon$. In this case (by the observation in the previous paragraph), $\M_{0,\a}^\delta$ parametrizes $n$ points $(x_1,\ldots,x_n)$ on $\P^1$ such that $x_i \neq x_n$ for any $i < n$, and $x_1 \neq x_{n-1}$. Up to a projectivity we can set $x_1 = 0$, $x_{n-1}=1$ and $x_n = \infty$, and the moduli space is equal to $\A^{n-3}$. This proves the base case. Moreover, we make the observation that if $I = \{i,i+1,\ldots,j\}$, then the subvariety $Z_I \in \A^{n-3}$ becomes identified with the subvariety $\Delta_I \subset \M_{0,\a}^\delta$, and $Y_I$ the subvariety $\Delta_{\{i,j\}}$. 
	
	For the induction step, suppose that $\a$ is a weight vector, and that $\a' \prec \a$ is in an adjacent chamber. Then there is a unique interval $I$ with endpoints $\{i,j\}$ which is large with respect to $\a$ but not $\a'$. As described in Subsection \ref{wallcrossing} $\M_{0,\a}^\delta$ is obtained from $M_{0,\a'}^\delta$ by blowing up $\Delta_I$ and removing the strict transform of $\Delta_{\{i,j\}}$. Moreover, the strict transform of a coincidence set is a coincidence set, so $\Delta_I$ is the iterated strict transform of $Z_I \subset \A^{n-3}$ and $\Delta_{\{i,j\}}$ is the iterated strict transform of $Y_I \subset \A^{n-3}$. The result follows.	
\end{proof}

Already the first non-trivial example $n=5$ is very instructive. In this case, our description says that $\M_{0,5}^\delta$ is isomorphic to the variety obtained by blowing up $\A^2$ in the points $(0,0)$ and $(1,1)$, and removing the strict transforms of the two lines $y=0$ and $x=1$. See Figure \ref{img1}.
	
	\begin{figure}[h]
		\centering
		\begin{minipage}{.45\linewidth}
			\centering
			\begin{tikzpicture}[scale=0.55]
			
			\node (v4) at (-3.5,4) {};
			\node (v3) at (-3.5,11) {};
			\node (v1) at (-5,9.5) {};
			\node (v2) at (2,9.5) {};
			\draw  (v1) edge (v2);
			\draw  (v3) edge (v4);
			\node at (0.5,9.5) {};
			\node (v5) at (0.5,9.5) {};
			\node (v6) at (-3.5,5.5) {};
			\filldraw  (v5) ellipse (0.1 and 0.1);
			\filldraw  (v6) ellipse (0.1 and 0.1);
			\end{tikzpicture}
			
			\captionof{figure}{$\M_{0,5}^\delta$ is obtained from $\A^2$ by blowing up the two thick marked points and removing the strict transform of the two lines.}
			\label{img1}
		\end{minipage}
		\hspace{.05\linewidth}
		\begin{minipage}{.45\linewidth}
			\centering
			\begin{tikzpicture}[scale=0.45]
			\begin{scope}[yscale=1,xscale=-1, shift={(0,2.7)}]
			\draw (-3,-4) node (v1) {} -- (0,1.75) node (v3) {} -- (0,-2) node (v5) {} -- (intersection of v1--v3 and v1--v5);
			
			\begin{scope}[shift={(-5,1.5)}]
			
			\draw (-3,-4) node (v6) {} -- (0,1.75) node (v2) {};
			\draw[dashed] (v2) -- (0,-2) node (v4) {} -- (v6);
			\end{scope}
			\draw (intersection of v2--v6 and v2--v4) edge (intersection of v3--v5 and v3--v1);
			\draw[dashed]  (v4) edge (v5);
			\draw  (intersection of v1--v3 and v1--v5) edge (intersection of v6--v3 and v6--v5);
			\node (v7) at (-9.5,0.25) {};
			\node (v9) at (-3,-5.5) {};
			\node (v8) at (-3,5.2) {};
			\node (c) at (-3,2.75) {};
			\node (a) at (-5.5,4.75) {};
			\node (b) at (-2.5,-5.5) {};
			\draw[line width=1.5pt]  (a) edge (b);
			\node (v10) at (-2.45,4.95) {};
			\draw[line width=1.5pt]  (v10) edge (v7);

			\draw[line width=4pt,color=white]  (v8) edge (c);
			\draw[line width=1.5pt]  (v8) edge (v9);
			\end{scope}
			\end{tikzpicture}

			\captionof{figure}{Diagram illustrating the construction of $\M_{0,6}^\delta$ from $\A^3$.}
			\label{img2}
		\end{minipage}
	\end{figure}
	
	When $n=6$, the construction is illustrated in Figure \ref{img2}. Here we will need to perform five blow-ups, in which the blow-up centers are given by the three thick lines and their two intersection points, and remove the strict transforms of five planes, which are drawn as the planes bounding the solid prism in the figure.  We begin by blowing up the points $(0,0,0)$ and $(1,1,1)$ in $\A^3$, which are the two intersection points of the thick lines, and then removing the strict transforms of the planes $z=0$ and $x=1$, which are the two rectangular backsides of the prism. Then we blow up the strict transforms of the remaining three lines ($x=y=0$, $x=y=z$ and $y=z=1$), and remove the iterated strict transforms of the remaining three planes ($y=0$, $x=z$ and $y=1$).

\subsection{Proof of purity} \label{sec:proof}

We are almost ready to prove the assertions about the mixed Hodge structure of $M_{0,n}^\delta$, but we shall need two further cohomological lemmas. The first of the two contains the heart of the whole argument. 
 
\begin{lem} \label{mainlem} Let $Z \subset Y \subset X$ be a chain of smooth closed subvarieties, where $Y$ has codimension $1$ in $X$. Suppose that $H^k(X)$ and $H^k(Y)$ are pure of weight $2k$ for all $k$, and that $H^\bullet(Y) \to H^\bullet(Z)$ is onto. Let $\widetilde X = \Bl_Z X$, and let $\widetilde Y$ be the strict transform of $Y$. Then $H^k(\widetilde X \setminus \widetilde Y)$ is pure of weight $2k$ for all $k$. \end{lem}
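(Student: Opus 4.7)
The plan is to realize $H^k(\widetilde X \setminus \widetilde Y)$ as a sub-mixed Hodge structure of a cohomology group already known to be pure of weight $2k$. Write $U = \widetilde X \setminus \widetilde Y$, $V = X \setminus Y$, $E \subset \widetilde X$ for the exceptional divisor of the blow-up, and $E_0 = E \setminus (E \cap \widetilde Y)$; then $V$ sits inside $U$ as the dense open complement of the divisor $E_0$, while the projection $p : E_0 \to Z$ is an $\A^{c-1}$-bundle with $c = \mathrm{codim}_X(Z)$. (The degenerate case $c = 1$ forces $\widetilde Y = \emptyset$ and $\widetilde X = X$, in which case the conclusion is immediate, so I assume $c \geq 2$.) The Gysin sequence for the divisor $E_0 \subset U$ will be the main tool: the goal is to prove that the Gysin map $i_\ast : H^{k-2}(E_0)(-1) \to H^k(U)$ vanishes, forcing $H^k(U) \hookrightarrow H^k(V)$ and thereby transferring the desired purity.

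First I would assemble some preliminary purity statements. The hypothesis that $H^\bullet(Y) \to H^\bullet(Z)$ is onto realizes each $H^k(Z)$ as a quotient MHS of the pure structure $H^k(Y)$, so $H^k(Z)$, and hence $H^k(E_0) \cong H^k(Z)$ via $p^\ast$, is pure of weight $2k$. For $V$, I would run the Gysin sequence for $Y \subset X$: since a morphism of MHS from a pure structure of some weight $w$ to a pure structure of strictly larger weight must vanish, the maps $H^{k-2}(Y)(-1) \to H^k(X)$ and $H^{k-1}(Y)(-1) \to H^{k+1}(X)$ are both zero (the sources have weights $2k-2$ and $2k$, the targets weights $2k$ and $2k+2$). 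What remains is a short exact sequence $0 \to H^k(X) \to H^k(V) \to H^{k-1}(Y)(-1) \to 0$ of pure weight $2k$ pieces, so $H^k(V)$ is itself pure of weight $2k$.

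The crux is to verify that $i_\ast = 0$. For this I would exploit the Cartesian square
\[
\begin{tikzcd}
E_0 \arrow[r, hook, "i"] \arrow[d, "q"'] & U \arrow[d, "\pi|_U"] \\
Y \arrow[r, hook, "\iota_Y"'] & X
\end{tikzcd}
\]
in which $q$ is the composition $E_0 \xrightarrow{p} Z \xrightarrow{j} Y$. Both horizontal arrows are codimension-one closed embeddings and $(\pi|_U)^{-1}(Y) = E_0$ has the expected codimension one in $U$, so the square is transverse, and the base change formula reads $i_\ast \circ q^\ast = (\pi|_U)^\ast \circ (\iota_Y)_\ast$. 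The right-hand side is zero because $(\iota_Y)_\ast : H^{k-2}(Y)(-1) \to H^k(X)$ vanishes by the same weight argument as above; on the other hand, the assumed surjectivity of $j^\ast$, combined with $p^\ast$ being an isomorphism, shows that $q^\ast = p^\ast \circ j^\ast$ is surjective. Together these give $i_\ast = 0$, as desired.

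The step I expect to require the most care is the transversality of the Cartesian square, so that base change really holds without an excess-intersection correction. This uses essentially that $\widetilde Y$ has been removed from $\widetilde X$: were $\widetilde Y$ retained, the preimage $\pi^{-1}(Y) = \widetilde Y \cup E$ would fail to have the correct codimension along $\widetilde Y \cap E$, introducing excess that would spoil the identity $i_\ast \circ q^\ast = (\pi|_U)^\ast \circ (\iota_Y)_\ast$ that drives the argument.
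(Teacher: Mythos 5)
Your proposal is correct, and it takes a genuinely different route from the paper's proof. The paper argues on the closed pair $(\widetilde X,\widetilde Y)$: it writes down the blow-up decompositions of $H^\bullet(\widetilde X)$ and $H^\bullet(\widetilde Y)$ into summands involving $H^\bullet(X)$, $H^\bullet(Y)$ and Tate twists of $H^\bullet(Z)$, observes that these summands all have distinct weights, so strictness of MHS morphisms forces the Gysin map $H^{k-2}(\widetilde Y)(-1)\to H^k(\widetilde X)$ to be the direct sum of the restriction $H^{k-2}(Y)(-1)\to H^{k-2}(Z)(-1)$ and identity maps on the twisted $H^\bullet(Z)$-summands; the long exact sequence of the pair then splits into four-term sequences
$$0 \to H^k(X) \to H^k(\widetilde X\setminus\widetilde Y) \to H^{k-1}(Y)(-1)\to H^{k-1}(Z)(-1)\to 0,$$
in which the surjectivity hypothesis makes the last arrow onto, so the middle term is an extension of pure weight-$2k$ pieces. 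You instead decompose the open variety $U=\widetilde X\setminus\widetilde Y$ into $V=X\setminus Y$ and the affine-bundle piece $E_0$, establish purity of $H^\bullet(V)$ and $H^\bullet(E_0)$ first, and kill the Gysin map $i_\ast$ of $E_0\subset U$ geometrically, via the base-change identity $i_\ast q^\ast=(\pi|_U)^\ast(\iota_Y)_\ast$ combined with surjectivity of $q^\ast=p^\ast\circ j^\ast$ --- so the hypothesis that $H^\bullet(Y)\to H^\bullet(Z)$ is onto enters at a different point (cancelling $q^\ast$ on the left), and weights are used only to kill $(\iota_Y)_\ast$ and to handle $V$; this is genuinely necessary, since purity of $H^k(U)$ is not yet known and the source $H^{k-2}(E_0)(-1)$ has weight $2k-2$, admissible for a smooth variety. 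What each approach buys: the paper's weight-forced identification of the Gysin map is shorter but leans on purity to pin down an a priori unknown map, and it yields the explicit four-term sequences as a by-product; yours trades this for an excess-intersection verification and outputs the equivalent short exact sequences $0\to H^k(U)\to H^k(V)\to H^{k-1}(E_0)(-1)\to 0$. Your transversality step is indeed the one place requiring care, and your instinct about it is right: beyond equality of codimensions one needs the scheme-theoretic preimage of $Y$ in $U$ to be the \emph{reduced} divisor $E_0$, equivalently $N_{E_0/U}\cong q^\ast N_{Y/X}$, which holds because $Y$ is smooth along $Z$, so $\pi^\ast Y=\widetilde Y+E$ with $E$ of multiplicity one, and $\widetilde Y$ has been discarded. (One small caveat: your reduction of the case $c=1$ to $\widetilde Y=\emptyset$ implicitly uses $Z=Y$, which is automatic for irreducible $Y$ and is exactly the degenerate case $\{i,j\}=I$ noted in the paper's application.)
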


\begin{proof}
	If $d$ denotes the codimension of $Z$, then by the blow-up formula we have 
	$$ H^k(\widetilde X) = H^k(X) \oplus H^{k-2}(Z)(-1) \oplus H^{k-4}(Z)(-2) \oplus \ldots \oplus H^{k-2d}(Z)(-d)$$
	and
	$$ H^k(\widetilde Y) = H^k(Y) \oplus H^{k-2}(Z)(-1) \oplus \ldots \oplus H^{k-2d+2}(Z)(-d+1).$$ There is also a long exact sequence
	$$  \ldots \to H^{k-1}(\widetilde X \setminus \widetilde Y) \to H^{k-2}(\widetilde Y)(-1) \to H^k(\widetilde X) \to H^k(\widetilde X \setminus \widetilde Y) \to \ldots $$
	Consider the Gysin map $H^{k-2}(\widetilde Y)(-1) \to H^k(\widetilde X) $. Each summand in the direct sum decomposition above has different weight, so compatibility of weights forces the Gysin map to be the direct sum of the restriction map $H^{k-2}(Y)(-1) \to H^{k-2}(Z)(-1)$ (which we assumed surjective) and the identity maps of $H^{k-2i}(Z)(-i)$. This implies that the long exact sequence splits up into a sum of exact sequences of the form
	$$ 0 \to H^k(X) \to H^k(\widetilde X \setminus \widetilde Y) \to H^{k-1}(Y)(-1) \to H^{k-1}(Z)(-1) \to 0.$$
In particular it follows that $H^k(\widetilde X \setminus \widetilde Y)$ is pure of weight $2k$. \end{proof}

\begin{lem}\label{leray}
Suppose that $X$ is a smooth variety and $D \subset X$ is a strict normal crossing divisor, $D = D_1 \cup \ldots \cup D_k$. For $I \subseteq \{1,\ldots, k\}$ we let $D_I = \bigcap_{i\in I} D_i$, including $D_\emptyset = X$. Suppose that $H^k(D_I)$ is pure of weight $2k$ for all $I$ and $k$. Then there exists an isomorphism
\begin{equation*}
H^k(X\setminus D) \cong \bigoplus_{I = \{i_1,\ldots,i_q\}}H^{k-q}(D_I)(-q).
\end{equation*} 
In particular, also $H^k(X \setminus D)$ is  pure of weight $2k$.
\end{lem}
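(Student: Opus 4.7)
The plan is to apply the weight spectral sequence for the open immersion $U = X \setminus D \hookrightarrow X$ due to Deligne \cite{hodge2}, or equivalently the spectral sequence of the weight filtration on the logarithmic de Rham complex $\Omega^\bullet_X(\log D)$. Its $E_1$ page has the form
$$E_1^{-q, p+q} = \bigoplus_{|I|=q}H^{p-q}(D_I)(-q) \Rightarrow H^p(U,\Q),$$
it is a spectral sequence of mixed Hodge structures, and it degenerates at $E_2$ by Deligne's general theorem. This is, up to reindexing, the same spectral sequence as the one used in the remark following Theorem \ref{koszul1} for the cohomology of $M_{0,n}$.

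The key step is a weight count on the $E_1$ page. Under the purity hypothesis, $H^{p-q}(D_I)$ is pure of weight $2(p-q)$, so after the Tate twist the term $E_1^{-q,p+q}$ is pure of weight $2(p-q) + 2q = 2p$. Since the spectral sequence is one of mixed Hodge structures, all subquotients $E_\infty^{-q,k+q}$ contributing to $H^k(U)$ remain pure of weight $2k$. Consequently the weight filtration on $H^k(U)$ is concentrated in a single degree, which gives that $H^k(U)$ is pure of weight $2k$.

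To promote this to the asserted direct sum decomposition, I would invoke the semisimplicity of the category of polarizable pure $\Q$-Hodge structures of a fixed weight: the filtration on $H^k(U)$ induced by the spectral sequence is a successive extension of the pure Hodge structures $\bigoplus_{|I|=q}H^{k-q}(D_I)(-q)$, all of weight $2k$, and each such extension splits. Combining the splittings produces the desired (non-canonical) isomorphism.

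The only mildly delicate point is keeping track of the Tate twists on the $E_1$ page so that the weights of all contributing terms come out uniformly to $2k$; this is standard in Deligne's theory and presents no real obstacle. Note in particular that $X$ is not assumed compact, but the argument is insensitive to this because the spectral sequence and its abutment carry compatible mixed Hodge structures on any smooth variety. Everything beyond the weight computation is formal.
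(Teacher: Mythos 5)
Your proposal is in essence the paper's own argument: the paper runs the Leray spectral sequence of the inclusion $X \setminus D \hookrightarrow X$, with $E_2^{pq} = \bigoplus_{|I|=q} H^p(D_I)(-q) \Rightarrow H^{p+q}(X\setminus D)$, and this is (after d\'ecalage) exactly the $E_1$ page of the weight spectral sequence you use; the mechanism --- purity of the terms forces degeneration for weight reasons --- is the same, and your $E_1$ formula and Tate twists are correct.

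There is, however, one genuine flaw in how you justify degeneration. You appeal to ``Deligne's general theorem'' that the weight spectral sequence degenerates at $E_2$. First, that theorem is proved for $U$ compactified inside a smooth \emph{proper} variety, whereas here $X$ is only assumed smooth --- and in the paper's application (Theorem \ref{thm:purity}) it is decidedly non-proper --- so the citation does not apply as stated. Second, and more seriously, $E_2$-degeneration would not give your conclusion even if it held: the $d_1$ differential of the weight spectral sequence is the Gysin map and is nonzero in general (already for $\A^1 \subset \P^1$), so degeneration at $E_2$ only identifies the graded pieces of $H^k(X\setminus D)$ with the $E_2$-terms, i.e.\ with the cohomology of the Gysin complexes, not with the full direct sum $\bigoplus_{|I|=q} H^{k-q}(D_I)(-q)$ asserted in Lemma \ref{leray}. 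What you need is degeneration at $E_1$, and your own weight count supplies it, provided you say so explicitly: every differential raises total degree by one, hence under the purity hypothesis is a morphism from a Hodge structure pure of weight $2p$ to one pure of weight $2p+2$, and therefore vanishes --- this kills $d_1$ together with all higher differentials. With that sentence added (and the appeal to Deligne's theorem removed), your proof is correct and coincides with the paper's, which applies precisely this weight argument to the Leray $E_2$ page. (Like the paper, you are implicitly using that the spectral sequence is one of mixed Hodge structures for non-proper $X$, which is standard but deserves the same caveat.) Your final step --- splitting the resulting filtration by pure weight-$2k$ sub-Hodge structures via semisimplicity of polarizable pure Hodge structures --- is sound, and is in fact spelled out more carefully than in the paper, where the passage from degeneration to the claimed isomorphism is left implicit.
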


\begin{proof}
	The Leray spectral sequence of the embedding of \(X\setminus D\) in \(X\) reads
	$$ E_2^{pq} = \bigoplus_{\vert I \vert =q }H^p(D_I)(-q) \implies H^{p+q}(X \setminus D).$$
	The hypothesis says that $E_2^{pq}$ is pure of weight $2(p+q)$, so compatibility with weights forces the spectral sequence to degenerate immediately, and the claimed isomorphism follows.  
\end{proof}

Let us now turn to the proof of Theorem \ref{thmthree} from the introduction. 

\begin{thm}\label{thm:purity}
	Let $\a$ be a weight vector such that at least one marking has weight $1$. Then $H^k(\M_{0,\a}^\delta)$ is pure of weight $2k$ for all $k$. In particular, this holds for the moduli space $M_{0,n}^\delta$.
\end{thm}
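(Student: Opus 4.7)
My plan is to prove this by induction on $n$, with base case $n = 3$ trivial since $\M_{0,3}^\delta$ is a point. For the inductive step, I would fix a weight vector $\a$ on $n$ markings and, by relabeling (using that purity is intrinsic), assume $a_n = 1$. This brings us into the setting of Theorem \ref{construction}, which presents $\M_{0,\a}^\delta$ as obtained from $\A^{n-3}$ by a finite sequence of wall-crossings: each step blows up a subvariety $Z_J = \Delta_I$ (for a minimal large interval $I$ with endpoints $\{i,j\}$) and removes the strict transform of the divisor $Y_J = \Delta_{\{i,j\}}$ containing the blow-up center.

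\textbf{Inner induction on wall-crossings.} I would then run a secondary induction on the number of wall-crossings performed. The starting space $\A^{n-3}$ trivially has pure cohomology. At each subsequent step, passing from $X_k = \M_{0,\a'}^\delta$ to $X_{k+1} = \M_{0,\a}^\delta$ is exactly the setup of Lemma \ref{mainlem} with $X = X_k$, $Y = \Delta_{\{i,j\}}$, and $Z = \Delta_I$. Purity of $X_k$ is the inner inductive hypothesis, so it remains to verify the two remaining hypotheses of Lemma \ref{mainlem}: purity of $Y$, and surjectivity of the restriction $H^\bullet(Y) \to H^\bullet(Z)$.

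\textbf{Verifying the hypotheses.} Surjectivity is the easy part: since the ``other'' interval between $i$ and $j$ (the one going through $n$) contains the weight-1 marking $a_n$ and is therefore large, $I$ is the minimal small interval containing $\{i,j\}$, and Lemma \ref{retract} provides a retraction $\Delta_{\{i,j\}} \twoheadrightarrow \Delta_I$. For purity of $Y = \Delta_{\{i,j\}}$, I would invoke Lemma \ref{coincidenceset}: this coincidence set is isomorphic to an open subset of some $\M_{0,\a^{\dagger}}^\delta$ with $n - 1$ markings (still carrying the weight-1 marking inherited from $a_n$), obtained by deleting a union of boundary divisors whose intersections correspond to chord configurations in an $(n{-}1)$-gon. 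By the topological-type stratification, each such intersection is a product $\prod_l \M_{0,\a_l}^\delta$ with each factor having strictly fewer markings (and each factor still carrying a weight-1 marking coming from the chord). The outer inductive hypothesis together with K\"unneth gives purity of weight $2k$ for every such intersection, and Lemma \ref{leray} then propagates purity to the open complement $Y$.

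\textbf{Main obstacle.} The technical heart is condition (ii), purity of the coincidence set $\Delta_{\{i,j\}}$. This is where the two layers of induction interact: we must reach inside the inner induction on wall-crossings, unravel the coincidence set through Lemma \ref{coincidenceset} into a lower-arity moduli space, apply the outer induction to both $\M_{0,\a^\dagger}^\delta$ and all of its boundary strata, and glue these back together with Lemma \ref{leray}. The bookkeeping requires verifying at each stage that the ``at least one weight-1 marking'' hypothesis is preserved --- for $\M_{0,\a^\dagger}^\delta$ this weight-1 marking descends from $a_n$, while for each factor $\M_{0,\a_l}^\delta$ of a stratum it is furnished by the new marking introduced by the chord. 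Once this is organized correctly, the mechanism of Lemma \ref{mainlem} --- wherein the weights force the Gysin map to split into the identity plus the (surjective) restriction $H^\bullet(Y) \to H^\bullet(Z)$ --- does all the cohomological work.
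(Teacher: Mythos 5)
Your proposal is correct and follows essentially the same route as the paper: a double induction (on the number of markings and on the wall-crossings of Theorem \ref{construction}), with Lemma \ref{mainlem} applied at each wall-crossing, surjectivity of $H^\bullet(\Delta_{\{i,j\}}) \to H^\bullet(\Delta_I)$ supplied by the retract of Lemma \ref{retract}, and purity of $\Delta_{\{i,j\}}$ obtained by combining Lemma \ref{coincidenceset}, the outer induction applied to the boundary strata, and Lemma \ref{leray}. Your bookkeeping of where the weight-$1$ marking lives (inherited from $a_n$ for the lower-arity space, furnished by the chord for the stratum factors) makes explicit a point the paper only remarks parenthetically.
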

\begin{proof}
We are going to prove this by induction on the number of large intervals, using the inductive construction of $\M_{0,\a}^\delta$ from $\A^{n-3}$ described in Theorem \ref{construction}. The base case for the induction is thus $\A^{n-3}$ itself, which clearly has $H^k$ pure of weight $2k$ for all $k$.

For the induction step, suppose that $\a' \prec \a$ are weight vectors, with a unique interval $I$ with endpoints $\{i,j\}$ which is large with respect to $\a$ but not $\a'$. By induction, $\M_{0,\a'}^\delta$ has $H^k$ pure of weight $2k$. We wish to prove the same thing for $\M_{0,\a}^\delta$, which is the blow-up of $\M_{0,\a'}^\delta$ in $\Delta_I$ minus the strict transform of $\Delta_{\{i,j\}}$. By Lemma \ref{mainlem}, we are done if we can prove: (i) that $H^k(\Delta_{\{i,j\}})$ is pure of weight $2k$ for all $k$, and (ii) that $H^\bullet(\Delta_{\{i,j\}}) \to H^\bullet(\Delta_I)$ is a surjection.

For (i), let $\mathscr B$ be the weight vector given by deleting $a_j$ and replacing $a_i$ by the sum $a_i+a_j$. By Lemma \ref{coincidenceset}, $\Delta_{\{i,j\}}$ is isomorphic to the complement of a union of boundary divisors in $\M_{0,\mathscr B}^\delta$. By induction on $n$, $\M_{0,\mathscr B}^\delta$ and all intersections of boundary divisors on it have $H^k$ pure of weight $2k$ (note that all smaller moduli spaces involved will have a marking of weight $1$). We conclude from Lemma \ref{leray} that the same is true for $\Delta_{\{i,j\}}$. 

For (ii), the inclusion $\Delta_I \hookrightarrow \Delta_{\{i,j\}}$ is a retract by Lemma \ref{retract}, which implies in particular that the restriction map in cohomology is surjective. This concludes the proof.
\end{proof}

\begin{rem}
	It seems plausible that the same result holds for all the moduli spaces $\M_{0,\a}^\delta$ --- that is, also those which do not have a marking of weight $1$ --- but we do not have a proof of this fact. One would need to verify that $H^k(\M_{0,\a}^\delta)$ is pure of weight $2k$ for all collections of weights with $\sum_{i=1}^n a_i$ arbitrarily close to $2$. 
\end{rem}

\appendix
\section{Preliminaries on (planar) operads}\label{sec:operads}

In this appendix we review some necessary background material on operads, both for completeness and to fix notation. 
All dg (co)operads are assumed to be (co)augmented. This allows one to discard with the distinction between dg (co)operads and dg pseudo-(co)operads, and we will accordingly drop the qualifying prefix ``pseudo'' in front of operads and cooperads outside this section. We assume all dg cooperads to be conilpotent. We otherwise follow the conventions concerning operads adopted in \cite{lodayvallette}. A notable exception is the notion of \textit{planar} (co)operads, which to our knowledge has only one real precedent in the literature, see~\cite[Section 3]{menichi}, though many have remarked on the basic idea. The idea for the concept is simple enough: just like cyclic operads are based on trees, operads on rooted trees, and nonsymmetric operads on planar rooted trees, our notion of planar operads is based on planar (non-rooted) trees. Given the established terminology in the field, planar operads should perhaps be called nonsymmetric cyclic operads. For those who are already familiar with operads, the geodesic definition of a planar operad \(\mathsf{O}\) is as follows.

\subsection{The brief definition}

\begin{defn}
	A \emph{planar pseudo-operad} is a nonsymmetric pseudo-operad \(\mathsf{O}\) (in some cocomplete symmetric monoidal category) where each component \(\mathsf{O}(n)\) has an action of the cyclic group \(\Z/(n+1)\Z\), satisfying the following compatibility relations: if \(\tau:\mathsf{O}(n)\to\mathsf{O}(n)\) is the right action of the generating cycle \((n+1\,1\dots n)\), then
	\[
	(\phi\circ_1\psi)\tau = \psi\tau\circ_n \phi\tau, \;\; \forall \phi\in\mathsf{O}(m),\,\psi\in\mathsf{O}(n),\,m,n\geq 2,
	\]
	while
	\[
	(\phi\circ_i\psi)\tau = \phi\tau \circ_{i-1} \psi\tau, \;\; \forall \phi\in\mathsf{O}(m),\,\psi\in\mathsf{O}(n),\,m,n\geq 2, 2\leq i\leq m.
	\]
	A \emph{planar operad} is a nonsymmetric operad with a compatible collection of cyclic group actions, as above, and additionally satisfying that the generator of \(\Z/2\Z\) maps the operad unit \(1\in\mathsf{O}(1)\) to itself. 
	Suitably reversing arrows defines the notion of \emph{planar (pseudo-)cooperads}.
\end{defn}

The definition has an important sibling notion in the special case when the ambient monoidal category is the category of dg vector spaces over a field $\mathbb K$, with Koszul sign rules. This is the notion of an \emph{antiplanar dg operad}, or what we might have called nonsymmetric anticyclic dg operads --- they are to anticyclic dg operads what planar dg operads are to cyclic dg operads. 

In the next section we give a more thorough treatment, not taking the definition of nonsymmetric operads for granted. The reader is advised to refer to this portion of the paper only as needed. 

\subsection{The free planar operad functor}

A \emph{stable labeled planar tree} is a tree graph, where every vertex \(v\) has a specified cyclic order on the set of adjacent half-edges, every vertex has valency \(\geq 3\), and the set of legs is numbered by an order-preserving bijection with the cyclically ordered set \(\{1,\dots,n\}\), for some \(n\geq 3\). An isomorphism of stable labeled planar graphs is an isomorphism of the underlying graphs that respects all extra structure. With these conventions stable labeled planar trees form a groupoid \(\mathsf{PT}\). Note that it decomposes into subgroupoids \(\mathsf{PT}_n\) of trees with \(n\) legs.

Fix a cocomplete symmetric monoidal category \(\mathsf{V}\), such that $- \otimes -$ is cocontinuous in both variables. A \emph{planar collection} in \(\mathsf{V}\) is an indexed family \(\{K_n\mid n\geq 3\}\) of objects in \(\mathsf{V}\), such that \(K_n\) is a representation of the cyclic group \(\Z/n\Z\). Such collections form a category. Moreover, every planar collection \(K\) defines a functor
\[
K[\,]: \mathsf{PT} \to \mathsf{V}
\]
on the category of stable labeled planar trees and their isomorphisms, via
\[
K[T] = \bigotimes_{v\in \mathrm{Vert}(T)} K_{n(v)}.
\]
Above \(n(v)\) is the number of half-edges adjacent to the vertex. To be precise, instead of \(K_{n(v)}\) one should write
\[
\biggl( \bigoplus_{F_v\cong \{1,\ldots,n\}} K_{n(v)} \biggr)_{\Z/n\Z},
\]
a sum over order-preserving bijections between $F_v$ --- the cyclically ordered set of half-edges adjacent to $v$ --- and a standard cyclically ordered set. This can be used to define an endofunctor \(\freepl\) on the category of planar collections by
\[
\freepl(K)_n = \mathrm{colim}\bigl(\mathsf{PT}_n \xrightarrow{K[\,]}\mathsf{V}\bigr).
\]
\newcommand{\Tree}{\mathrm{PTree}}

If we let $\Tree_n$ denote the set of isomorphism classes of stable planar trees with $n$ legs, then one may write somewhat informally
\[
\freepl(K)_n = \bigoplus_{T \in \Tree_n} K[T] = \bigoplus_{T \in \Tree_n} \bigotimes_{v \in \mathrm{Vert}(T)}K_{n(v)}.
\]
\begin{defn}
	We call \(\freepl\) the \emph{free planar operad functor}.
\end{defn}

\subsection{The definition of planar operads}

Assume that \(T\) is a stable, labeled planar tree and that for every vertex \(u\in \mathrm{Vert}(T)\) of \(T\) we are given a stable planar tree \(T_u\) whose legs are labeled by the half-edges adjacent to $u$. Then we can build a tree \(T'\) that contains each \(T_u\) as a subtree and has the property that contracting all the subtrees \(T_u\) of \(T'\) produces the original tree \(T\). In particular, \[ \mathrm{Vert}(T')=\coprod_{u\in \mathrm{Vert}(T)} \mathrm{Vert}(T_u),\] giving a canonical isomorphism
\[
\bigotimes_{u\in \mathrm{Vert}(T)}\bigotimes_{v\in \mathrm{Vert}(T_u)} K_{n(v)} \cong \bigotimes_{w\in \mathrm{Vert}(T')} K_{n(w)}.
\]
Now note that 
\[ (\freepl \circ \freepl)(K)_n  = \bigoplus_{T \in \Tree_n} \bigotimes_{u \in \mathrm{Vert}(T)} \bigoplus_{T_u \in \Tree_{n(u)}} \bigotimes_{v \in \mathrm{Vert}(T_u)} K_{n(v)}, \]
and using our assumption that $\otimes$ is cocontinuous we may rewrite this as a the direct sum of $\bigotimes_{u\in \mathrm{Vert}(T)}\bigotimes_{v\in \mathrm{Vert}(T_u)} K_{n(v)}$, where the sum is taken over all $T \in \Tree_n$ and all tuples $(T_{u})_{u \in \mathrm{Vert}(T)}$. 
Taking the summand corresponding to $T$ and $(T_u)$ to the summand corresponding to the tree $T'$ defines a natural transformation \(\freepl \circ \freepl\to \freepl\). The inclusion of trees with one vertex gives a natural transformation \(\id\to \freepl\). Together these two natural transformations give the free planar operad functor the structure of a monad.

\begin{defn}
	A \emph{planar (pseudo-)operad} in \(\mathsf{V}\) is an algebra for the free planar operad monad. A \emph{morphism} of planar (pseudo-)operads is a morphism of algebras for the free planar operad monad.
\end{defn}

A planar operad is determined by a planar collection \(\mathsf{O}\) and a family of \emph{composition} morphisms
\[
\circ_i^j : \mathsf{O}_n\otimes\mathsf{O}_k\to\mathsf{O}_{n+k-2},
\]
parametrized by \(1\leq i\leq n\), \(1\leq j\leq k\), satisfying certain associativity and equivariance conditions. These morphisms arise as follows. Let \(t_n\) be the tree with a single vertex and \(n\) legs. Graft the \(i\)th leg of the tree \(t_n\) to the \(j\)th leg of \(t_k\), to obtain a tree \(t_n\circ_i^j t_k\): the composition of \(\mathsf{O}\) is the morphism
\[
\mathsf{O}[t_n\circ_i^j t_k] \to \mathsf{O}[t_{n+k-2}]
\]
defined by the algebra structure \(\freepl(\mathsf{O})\to\mathsf{O}\). In fact, only the operations
\[
\circ^{n+1}_i: \mathsf{O}_{m+1}\otimes\mathsf{O}_{n+1} \to \mathsf{O}_{m+n},\;\; 1\leq i\leq n,
\]
suffice. We could have defined a planar (pseudo-)operad as a stable collection \(\mathsf{O}\) such that the collection \(\{\mathsf{O}(n)=\mathsf{O}_{n+1}\}_{n\geq 2}\) together with the operations \(\circ_i=\circ^{n+1}_i\) is a nonsymmetric (pseudo-)operad, satisfying some compatibility with the cyclic group actions, as in the beginning of this section.

\begin{rem}
	One can phrase the theory of planar operads in dual language, using dissected planar polygons in place of planar trees. Briefly, the dual of a \(n\)-legged planar corolla \(t_n\) is a planar \(n\)-gon \(\pi_n\). Let \(\chi_r(n)\) denote the set of dissections \(D\) of \(\pi_n\) into \(r+1\) smaller polygons (i.e., \(D=\{c_1,\dots,c_r\}\) is a collection of \(r\) pairwise nonintersecting chords on \(\pi_n\)). For instance, $\chi_0(n)= \{\pi_n\}$, and \(\chi_1(n)\) denotes the set of chords of \(\pi_n\). Set \(\chi(n)=\coprod_{r\geq 0} \chi_r(n)\). The free planar operad on a collection \(K\) can equally well be regarded as a colimit
	\[
	\freepl(K)_n = \coprod_{D\in \chi(n)} K[D].
	\]
\end{rem}

\subsection{Antiplanar operads}

Let us specialize now to the case when \(\mathsf{V}\) is the category of dg vector spaces, with Koszul sign rules. We can then define a slight variation of the free planar operad functor, as follows. Define
\[
\mathrm{det}\otimes K[\,] :T \mapsto \mathrm{det}(\mathrm{Vert}(T))\otimes K[T],
\]
where the determinant \(\mathrm{det}(S)\) of a finite set \(S\) is defined to be the top exterior power \(\wedge^{\#S}\mathbb{K}^S\), placed in degree zero. The formula
\[
\freeantipl(K)_n = \mathrm{colim}\bigl(\mathsf{PT}_n\xrightarrow{\mathrm{det}\otimes K[\,]}\mathsf{V}\bigr)
\]
again defines a monad. 

\begin{defn}
	The \emph{free antiplanar monad} is the functor \(\freeantipl\). The algebras of this monad are called \emph{antiplanar dg (pseudo-)operads}.
\end{defn}

\subsection{Cooperads}

The assumption that $\mathsf V$ is the category of dg vector spaces implies that the functors $\freepl$ and $\freeantipl$ are not only monads, but also in a natural way \emph{comonads}. The structure map
$$ \freepl \to \freepl \circ \freepl$$
is given by ``decomposing trees'' as in \cite[Section 5.8.7]{lodayvallette}. The counit is given by projection onto trees with a single vertex. Coalgebras for the comonad $\freepl$ are \emph{conilpotent planar cooperads}, and coalgebras for $\freeantipl$ are \emph{conilpotent antiplanar cooperads}. All cooperads in this paper will be conilpotent. A \emph{cofree} (anti)planar cooperad is one of the form $\freepl(M)$ (resp. $\freeantipl(M)$) for some planar collection $M$.

\subsection{Cyclic operads}

Cyclic operads are defined just like planar operads, except the construction is built on stable labeled trees, rather than planar stable labeled trees. In particular, stable labeled trees form a category \(\mathsf{T}\), and the free cyclic operad on a collection \(K\) is given functorially by a formula
\[
\freecyc(K)_n =  \mathrm{colim}\bigl(\mathsf{T}_n\xrightarrow{K[\,]}\mathsf{V}\bigr),
\]
exactly as in the planar case, but using the category \(\mathsf{T}\) of (not necessarily planar) stable labeled trees. The only differences are that (i) the free cyclic operad is built summing over a larger class of trees, and (ii) the components of cyclic operads carry actions of symmetric groups.  The free anticyclic operad on $K$ is in the same way given by 
\[
\freeanticyc(K)_n =  \mathrm{colim}\bigl(\mathsf{T}_n\xrightarrow{\mathrm{det}\otimes K[\,]}\mathsf{V}\bigr).
\]
For details on cyclic and anticyclic operads, see \cite{getzlerkapranovcyclic}.

\subsection{Bar and cobar constructions}

Given a collection \(K\), we follow Getzler-Kapranov \cite{getzlerkapranovcyclic} and define its \emph{operadic suspension} \(\Lambda K\) by
\[
\Lambda K_n = \Sigma^{2-n}K_n\otimes \mathrm{sgn}_n.
\]
Here the suspension \(\Sigma\) of a dg vector space is defined by \((\Sigma V)^n = V^{n+1}\). If \(K\) is a cyclic dg (co)operad, then \(\Lambda K\) is an anticyclic dg (co)operad. The same remains true if we replace the adjective (anti)cyclic by (anti)planar. This follows from noting that there is an equality of functors
\[
\freeantipl = \Lambda^{-1} \freepl\Lambda.
\]
Assume that \(\mathsf{O}\) is a planar dg operad. This can be used to define an extra differential \(d_{\barp}\) on the cofree conilpotent antiplanar cooperad \(\freeantipl(\Sigma\mathsf{O})\), in the standard way. It is defined in terms of decorated trees by using the operad compositions
\[
\mathsf{O}[T] \to \bigoplus_{T'=T/e} \mathsf{O}[T']
\]
to contract an edge in all possible ways. (After the suspensions this will square to zero and have degree plus one.) Moreover, \(d_{\barp}\) is a coderivation of the cocompositions of \(\freeantipl(\Sigma\mathsf{O})\). (However, $d_{\barp}$ is not a derivation of the natural \emph{operadic} composition maps of \(\freeantipl(\Sigma\mathsf{O})\), which is why the bar construction must be a cooperad.)

\begin{defn}
	The \emph{planar bar construction} on a planar dg operad \(\mathsf{O}\) is the antiplanar dg cooperad \(\barp\mathsf{O}\) obtained by adding the differential \(d_{\barp}\) to the antiplanar dg cooperad \(\freeantipl(\Sigma\mathsf{O})\).
	
Analogously, if \(\mathsf{O}\) is an antiplanar dg operad one defines the planar bar construction by
	\[
	\barp\mathsf{O} = \bigl( \freepl(\Sigma\mathsf{O}), d_{\barp} \bigr).
	\]
	In this situation the bar construction is a planar dg cooperad.
\end{defn}

Dually, if \(\mathsf{A}\) is a planar dg cooperad, then we get a square-zero, degree \(+1\) derivation \(d_{\cobarp}\) on the free antiplanar operad \(\freepl(\Sigma^{-1}\mathsf{A})\) by summing over all ways to split a vertex into two vertices connected by an edge, using the cocompositions.

\begin{defn}
	The \emph{planar cobar construction} on a planar dg cooperad \(\mathsf{A}\) is the antiplanar dg operad
	\[
	\cobarp\mathsf{A} = \bigl( \freeantipl(\Sigma^{-1}\mathsf{A}), d_{\cobarp} \bigr).
	\]
	If \(\mathsf{A}\) is instead antiplanar, we define the cobar construction
	\[
	\cobarp\mathsf{A} = \bigl( \freepl(\Sigma^{-1}\mathsf{A}), d_{\cobarp} \bigr)
	\]
	as a planar dg operad.
\end{defn}

\begin{defn}
	A morphism $\mathsf O \to \mathsf O'$ of dg (co)operads is a \emph{quasi-isomorphism} if the induced map on cohomology is an isomorphism. If $\mathsf O$ and $\mathsf O'$ are related by a zig-zag of quasi-isomorphisms, then we say that $\mathsf O$ and $\mathsf O'$ are \emph{quasi-isomorphic}.
\end{defn}

\begin{prop}
	The bar and cobar constructions are functorial, related by an adjunction
	\[
	\mathrm{Hom}(\cobarp(\mathsf{A}),\mathsf{O}) = \mathrm{Hom}(\mathsf{A}, \barp(\mathsf{O})),
	\]
	and the natural morphisms
	\[
	\cobarp\,\barp \mathsf{O} \to \mathsf{O}, \qquad \mathsf A \to \barp \, \cobarp \mathsf A,
	\]
	are quasi-isomorphisms of operads and cooperads, respectively. Moreover, if $\mathsf O$ and $\mathsf O'$ are quasi-isomorphic planar operads, then $\barp \mathsf O$ and $\barp \mathsf O'$ are again quasi-isomorphic; and if $\mathsf A\to \mathsf A'$ is a quasi-isomorphism on the associated gradeds of the coradical filtrations, then $\cobarp \mathsf A\to\cobarp \mathsf A'$ is a quasi-isomorphism.
\end{prop}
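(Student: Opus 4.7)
The proposition collects the standard formal properties of bar--cobar duality, which have been established in all the flavors of operads that appear in the literature (symmetric, nonsymmetric, cyclic, anticyclic, modular, \dots). My plan is to give proofs along the same lines as \cite{lodayvallette, getzlerkapranovcyclic}; the novelty is purely in the bookkeeping of the planar structure, so I will emphasize which inputs need to be checked in our setting.

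First I would establish functoriality and the adjunction. Functoriality is immediate from the definitions: a morphism $f\colon \mathsf O\to\mathsf O'$ of planar dg operads induces a map $\freeantipl(\Sigma\mathsf O)\to\freeantipl(\Sigma\mathsf O')$, and this commutes with the differentials $d_{\barp}$ because they are constructed tree-by-tree from the composition maps of $\mathsf O$, which $f$ respects. The adjunction is proved via the notion of \emph{twisting morphism}: a degree-one map $\alpha\colon \mathsf A\to\mathsf O$ of planar collections satisfying the Maurer--Cartan equation $\partial\alpha+\tfrac12[\alpha,\alpha]=0$ (where the bracket is built from the cocompositions of $\mathsf A$ and the compositions of $\mathsf O$). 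By freeness, such an $\alpha$ corresponds bijectively to a morphism $\cobarp\mathsf A\to\mathsf O$ of planar dg operads; by cofreeness, it corresponds to a morphism $\mathsf A\to\barp\mathsf O$ of antiplanar dg cooperads. Naturality in both variables gives the adjunction.

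The quasi-isomorphism statements for the unit and counit are the main obstacle, but here one can simply reuse the arguments from the cyclic (or nonsymmetric) case. The standard approach is to filter $\cobarp\,\barp\mathsf O$ by total tree size (the number of vertices of the ``outer'' tree from $\cobarp$ plus those of the inner trees from $\barp$). On the associated graded, only the portion of the differential that contracts an inner edge survives, and the resulting complex decomposes as a direct sum indexed by planar trees; for each such tree one recognises an acyclic Koszul-type ``edge-contraction'' complex (the homology is concentrated in the summand corresponding to a single vertex). The spectral sequence then collapses onto $\mathsf O$. The unit $\mathsf A\to\barp\,\cobarp\mathsf A$ is handled dually, filtering by the coradical filtration on $\mathsf A$ and using that the comonad $\freepl$ preserves quasi-isomorphisms of planar collections.

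Finally, for the preservation statements, suppose $f\colon\mathsf O\to\mathsf O'$ is a quasi-isomorphism of planar dg operads. Filter $\barp\mathsf O$ and $\barp\mathsf O'$ by the number of vertices in the underlying tree; the associated gradeds are $\freeantipl(\Sigma\mathsf O)$ and $\freeantipl(\Sigma\mathsf O')$ with only the internal differential, which are quasi-isomorphic since $\freeantipl$ is built using tensor products and colimits over a groupoid, hence preserves quasi-isomorphisms over a field of characteristic zero. A standard spectral sequence comparison then gives the result. The analogous argument using the filtration by coradical degree on the cooperad side handles the statement for $\cobarp$; here it is essential to filter by the coradical filtration (rather than arity), since this is the filtration that is compatible with the cobar differential, which is why the hypothesis is phrased in terms of the associated graded of the coradical filtration.
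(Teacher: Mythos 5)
Your proposal is correct and matches the paper's treatment: the paper in fact gives no proof of this proposition, instead declaring the entire appendix a routine specialization to planar trees of the well-known bar--cobar theory for cyclic/anticyclic and nonsymmetric operads (Getzler--Kapranov, Loday--Vallette), and the twisting-morphism adjunction, the vertex-count filtration for the counit and for bar preserving quasi-isomorphisms, and the coradical filtration on the cooperad side that you sketch are exactly the standard arguments from those sources that the paper implicitly invokes. The only genuinely planar-specific points are the ones you correctly flag as bookkeeping (cyclic-group equivariance and the determinant twist interchanging planar and antiplanar structures); note also that your appeal to characteristic zero is unnecessary, since stable labeled planar trees have trivial automorphism groups, so $\freepl$ and $\freeantipl$ are plain direct sums of tensor products over isomorphism classes of trees and preserve quasi-isomorphisms over any field.
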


Everything in this appendix is a specialization to the planar case of theory that is well-known for cyclic operads. In the cyclic case, the bar construction \(\barc \mathsf{O}\) of an anticyclic dg operad \(\mathsf{O}\), for example, is given by adding an edge-contracting differential \(d_{\barc}\) to the free cyclic operad \(\freecyc(\Sigma\mathsf{O})\).

\printbibliography

\end{document}